\numberwithin{equation}{section}
\newtheorem{thm}{Theorem}[section]
\newtheorem{lem}[thm]{Lemma}
\newtheorem{prop}[thm]{Proposition}
\newtheorem{cor}[thm]{Corollary}
\theoremstyle{definition}
\newtheorem{defn}[thm]{Definition}
\newtheorem{remark}[thm]{Remark}
\newcommand\Luka{\mathfrak{L}}
\newcommand\gMot{\operatorname{gMot}}
\newcommand\DD{\mathbb{D}}
\newcommand\Sch{\operatorname{Sch}^\circ}
\newcommand\wt{\operatorname{wt}}
\newcommand\wtL{\operatorname{wt}_L}
\newcommand\wtM{\operatorname{wt}_M}
\newcommand\wtS{\operatorname{wt}_S}
\newcommand{\CC}{\mathbb{C}}
\newcommand{\ZZ}{\mathbb{Z}}
\newcommand{\RR}{\mathbb{R}}
\newcommand\LL{\mathcal{L}}
\newcommand\cL{\mathcal{L}}
\newcommand\qand{\quad\mbox{and}\quad}
\newcommand\Qbinom[3]{\genfrac{[}{]}{0pt}{}{#1}{#2}_{#3}}
\newcommand\qbinom[2]{\Qbinom{#1}{#2}{q}}
\definecolor{darkblue}{rgb}{0.0,0,0.7}
\renewcommand\emph[1]{\textcolor{darkblue}{\it #1}}
\title{Combinatorics of orthogonal polynomials on the unit circle}
\author{Jihyeug Jang}
\address{Department of Mathematics, Sungkyunkwan University, Suwon,
  South Korea}
\email{jihyeugjang@gmail.com}
\author{Minho Song}
\address{Department of Mathematics, Sungkyunkwan University, Suwon,
  South Korea}
\email{smh3227@skku.edu}
\begin{document}

\begin{abstract}
  Orthogonal polynomials on the unit circle (OPUC for short) are a
  family of polynomials whose orthogonality is given by integration
  over the unit circle in the complex plane. There are combinatorial
  studies on the moments of various types of orthogonal polynomials,
  including standard orthogonal polynomials, Laurent biorthogonal
  polynomials, and orthogonal polynomials of type \( R_I \). In this
  paper, we study the moments of OPUC from a combinatorial
  perspective. We provide three path interpretations for them:
  \L{}ukasiewicz paths, gentle Motzkin paths, and Schr\"oder paths.
  Additionally, using these combinatorial interpretations, we derive
  explicit formulas for the generalized moments of some examples of
  OPUC, including the circular Jacobi polynomials and the
  Rogers--Szeg\H{o} polynomials. Furthermore, we introduce several
  kinds of generalized linearization coefficients and give
  combinatorial interpretations for them.
\end{abstract}

\maketitle

\section{Introduction}\label{sec:intro}

Orthogonal polynomials originated from the study of continued
fractions and have become significant objects of study in various
fields, including classical analysis, partial differential equations,
mathematical physics, probability, random matrix theory, and
combinatorics. A polynomial with real coefficients in the variable
\( x \) is called \emph{monic} if its leading coefficient is \( 1 \).
For a positive Borel measure \( \mu \) on \( \RR \) with infinite
support, a polynomial sequence \( ( P_n(x) )_{n\geq 0} \) is called a
sequence of \emph{(monic) standard orthogonal polynomials} with
respect to \( \mu \) if for all \( n,m \geq 0 \),
\( \deg P_n(x) = n \) and
\[
  \int_{\RR} P_m(x)P_n(x) d\mu(x) = K_n \delta_{m,n}, \quad K_n > 0.
\]
The orthogonality condition can be expressed in terms of a linear functional \( \cL \):
\[
  \cL(P_m(x)P_n(x)) = K_n \delta_{m,n}, \quad K_n > 0.
\]
According to Favard's theorem \cite[Theorem~4.4]{Chihara1978} and
\cite[Theorem~2.5.2]{Ismail2009}, a sequence
\( ( P_n(x) )_{n \geq 0} \) is a sequence of monic orthogonal
polynomials with respect to a measure \( \mu \) with
\( \int_{\RR}d\mu(x) = 1 \) (equivalently, a unique linear functional
\( \cL \) with \( \cL(1) = 1 \)) if and only if the polynomials
\( P_n(x) \) satisfy a three-term recurrence relation
\[
  P_{n+1}(x) = (x-b_n)P_n(x) - \lambda_nP_{n-1}(x) \quad\mbox{for}\quad n\geq 0,
\]
with the initial conditions \( P_{-1}(x) = 0 \) and \( P_0(x) = 1 \)
for some sequences \( \bm b = (b_n)_{n\geq 0} \) and
\( \bm\lambda = (\lambda_n)_{n \geq 1} \) with
\( \lambda_n > 0 \).

The \emph{moment} \( \mu_n \) of standard orthogonal polynomials is
defined by \( \mu_n = \int_{\RR}x^nd\mu(x) = \cL(x^n) \). Viennot
\cite{Viennot1983, Viennot1985} found a combinatorial interpretation
for the moment of standard orthogonal polynomials:
\[ \mu_n = \sum_{p \in \operatorname{Mot}_n}\wt(p),
\] where \( \operatorname{Mot}_n \) is the set of Motzkin paths \( p
\) from \( (0,0) \) to \( (n,0) \) and \( \wt(p) \) is a certain
weight of \( p \) depending on the sequences \( \bm b \) and \(
\bm\lambda \). Following this research, there has been active
exploration into combinatorial interpretations of moments for
well-known examples of standard orthogonal polynomials. For instance,
the moments of Hermite, Charlier, and Laguerre polynomials are
expressed as generating functions for perfect matchings, set
partitions, and permutations, respectively. See the survey paper
\cite{CKS2016} for more details.

There are combinatorial studies of various types of orthogonal
polynomials, not just for the standard ones. For instance, Kamioka
\cite{Kamioka2007} showed that the moment \( \mu_n \) of the Laurent
biorthogonal polynomials is equal to the weight sum of Schr\"oder
paths of length \( 2n \). In addition, Kim and Stanton \cite{KS2023}
showed that the moment \( \mu_n \) of the orthogonal polynomials of
type \( R_I \) is equal to the weight sum of Motzkin--Schr\"oder paths
of length \( n \). Inspired by the previous works, in this paper, we
investigate combinatorial aspects of orthogonal polynomials on the
unit circle. One of our goals is to find a combinatorial model
corresponding to the `???' part below.
\begin{align*}
 \mbox{Standard orthogonal polynomials} &\to \mbox{Motzkin paths \cite{Viennot1983, Viennot1985}} \\
  \mbox{Laurent biorthogonal polynomials} &\to \mbox{Schr\"oder paths \cite{Kamioka2007}} \\
  \mbox{Orthogonal polynomials of type \( R_I \)} &\to \mbox{Motzkin--Schr\"oder paths \cite{KS2023}} \\
  \mbox{Orthogonal polynomials on the unit circle} &\to \mbox{???}
\end{align*}

Let us first review the basic concepts of the orthogonal polynomials
on the unit circle. For more details, refer to \cite{Simon2005a,
  Simon2005, Ismail2009}.

Orthogonal polynomials on the unit circle are a family of polynomials
whose orthogonality is defined through integration over the unit
circle in the complex plane. Precisely, for a given nontrivial
probability measure \( \mu(\theta) \) on \( [-\pi,\pi] \), a
polynomial sequence \( ( \Phi_n(z) )_{n \geq 0} \) is called a
sequence of \emph{(monic) orthogonal polynomials on the unit circle
  (OPUC for short)} with respect to \( \mu(\theta) \) if
\( \deg \Phi_n(z) = n \), \( [z^n]\Phi_n(z) = 1 \), and it satisfies
the orthogonality condition
\[
  \int_{[-\pi,\pi]} \Phi_m(\zeta) \overline{\Phi_n(\zeta)} d\mu(\theta) = \kappa_n \delta_{m,n} \quad \mbox{for}\quad \zeta = e^{i\theta} \mbox{ and } \kappa_n > 0.
\]

For a polynomial \( f(z)=\sum_{k=0}^na_kz^k \) of degree \( n \) with
\(a_n \neq 0 \), let
\( \overline{f}(z)=\sum_{k=0}^n\overline{a_k}z^k \). The \emph{reverse
  polynomial} \( f^* \) is defined as \( z^n \overline{f}(1/z) \), or
equivalently, \( f^*(z) = \sum_{k=0}^n \overline{a_k} z^{n-k}\).

Analogous to Favard's theorem for standard orthogonal polynomials,
Verblunsky's theorem states that \( ( \Phi_n(z) )_{n\geq 0} \) is a
sequence of monic orthogonal polynomials on the unit circle with
respect to a unique measure \( \mu \) with
\( \int_{[-\pi,\pi]}d\mu(\theta) =1 \) if and only if it satisfies the
following recurrence relations known as Szeg\H{o}'s recurrence
relations:
\begin{align}
  \label{eq:rec1}\Phi_{n+1}(z) &= z\Phi_n(z) - \overline{\alpha_n}\Phi_n^*(z), \text{ or equivalently, }\\
  \label{eq:rec2}\Phi_{n+1}^*(z) &= \Phi_n^*(z) -\alpha_nz\Phi_n(z),
\end{align}
for some complex sequence \( (\alpha_n)_{n\ge0} \) with
\( |\alpha_n| <1 \). The "only if" part can be easily proven, and
there are surprisingly five different proofs for its converse in
\cite{Simon2005a}.

The \emph{moments} of OPUC are defined by
\( \mu_n= \int_{[-\pi,\pi]} e^{-in\theta}d\mu(\theta) \) for integers
\( n \). Using Szeg\H{o}'s recurrence relations and the orthogonality
condition, the moments can be obtained recursively. For instance, the
moments \( \mu_n \) for \( n=1,2,3 \) are
\begin{align}
 \notag \mu_1 &= \alpha_0, \\
  \notag \mu_2 &= \alpha_0^2 + \alpha_1(1-|\alpha_0|^2), \\
  \label{eq:mu3} \mu_3 &= \alpha_0^3 + 2\alpha_0\alpha_1(1-|\alpha_0|^2) - \alpha_1^2 \overline{\alpha_0}(1-|\alpha_0|^2) + \alpha_2(1-|\alpha_0|^2)(1-|\alpha_1|^2).
\end{align}

From a combinatorial perspective, there are a few results worth
noting. One of them is Verblunsky's formula \cite[(i) and (ii) in
Theorem~8.2.2]{Ismail2009}, which says polynomialities of coefficients
of \( \Phi_n(z) \) and \( \mu_n \) in terms of
\( \alpha_0, \overline{\alpha_0}, \alpha_1, \overline{\alpha_1},
\cdots \). Additionally, Golinskii and Zlatos
\cite[Theorem~1.1]{GZ2007} gave an explicit formula for the
coefficients of \( \Phi_n(z) \), which strengthens the first item of
Verblunsky's formula. In this context, a question of combinatorial
interpretations for the moments naturally arises.

To derive combinatorial interpretations of the moments of OPUC, we use
the recurrence relations of OPUC, similar to the approach used for
other types of orthogonal polynomials. Instead of interpreting
\(\mu_n\) directly, we introduce new parameters \(r\) and \(s\) to
define a more general moment \(\mu_{n,r,s}\) so that the moment
\(\mu_n\) follows as the special case \( r=s=0 \). To achieve this, we
translate the definitions written in terms of integrals into linear
functionals and explain the orthogonality property using an inner
product.

For a given measure \( \mu \), define a linear functional \( \LL \) by
\( \LL(f(z))= \int_{-\pi}^{\pi} f(e^{i \theta}) d\mu(\theta) \). Then
it follows \( \mu_n = \LL(z^{-n}) \). For an integer \( n \) and
nonnegative integers \( r \) and \( s \), we define a
\emph{generalized moment} \( \mu_{n,r,s} \) by
\[
  \mu_{n,r,s} = \frac{\cL(z^{-n}\cdot\overline{\Phi_r(z)}\cdot\Phi_s(z))}{\cL(\Phi_s(z)\cdot\overline{\Phi_s(z)})}.
\]
It follows directly from the definition that
\( \mu_{n,0,0} = \mu_n \). The concept of \( \mu_{n,r,s} \) natually
arises as a generalization in the combinatorial study of orthogonal
polynomials, as it serves as the coefficient in the expansion of
\( z^n \Phi_r(z) \) in terms of the basis \( (\Phi_s)_{s\geq 0} \).
Specifically, we have
\[
  z^n\Phi_r(z) = \sum_{s\geq 0} \overline{\mu_{n,r,s}} \cdot \Phi_s(z).
\]
Further details on linearization will be discussed in
\Cref{sec:generalized_lc}.

We provide three lattice path interpretations and obtain the
reciprocity for the generalized moment of OPUC. See~\Cref{sec:Moments
  and path} for their precise definitions.
\begin{thm}
  For nonnegative integers \( n \), \( r \), and \( s \), the
  generalized moment of OPUC can be expressed as
\[
  \mu_{n,r,s} = \sum_{p \in \mathfrak{L}_{n,r,s}}\wt_L(p) = \sum_{p \in \operatorname{gMot}_{n,r,s}}\wt_M(p),
\]
where \( \mathfrak{L}_{n,r,s} \) is the set of \emph{\L{}ukasiewicz
  paths} from \( (0,r) \) to \( (n,s) \) and
\( \operatorname{gMot}_{n,r,s} \) is the set of \emph{gentle Motzkin
  paths} from \( (-r,r) \) to \( (2n-s,s) \) for certain weight
functions \( \wt_L \) and \( \wt_M \). In particular, with the
assumption that \( \alpha_i \not= 0 \) for all \( i \), the
generalized moment can also be expressed as
\[
  \mu_{n,r,s} = \sum_{p \in \Sch_{n,r,s}}\wt_S(p),
\]
where \( \Sch_{n,r,s} \) is the set of \emph{Schr\"oder paths} from
\( (0,r) \) to \( (n,s) \) that do not start with a vertical down-step
\( (0,-1) \) for a certain weight function \( \wt_S \).

Moreover, for a negative integer \( -n \), we have
\[
  \mu_{-n,r,s} = \overline{\mu_{n,s,r}}\cdot \frac{\prod_{j=0}^{s-1}(1-|\alpha_j|^2)}{\prod_{j=0}^{r-1}(1-|\alpha_j|^2)}.
\]
\end{thm}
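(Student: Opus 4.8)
The plan is to prove the three path interpretations and the reciprocity formula in two essentially independent stages, deriving everything from Szeg\H{o}'s recurrence relations \eqref{eq:rec1} and \eqref{eq:rec2} together with the orthogonality condition. For the path interpretations, the key observation is that the denominator in the definition of $\mu_{n,r,s}$ is explicitly $\cL(\Phi_s\overline{\Phi_s}) = \kappa_s = \kappa_0\prod_{j=0}^{s-1}(1-|\alpha_j|^2)$, which follows by induction from \eqref{eq:rec1}; so it suffices to expand $\cL(z^{-n}\overline{\Phi_r(z)}\Phi_s(z))$ combinatorially. To do this I would set up a transfer-matrix/continued-fraction framework: rewrite Szeg\H{o}'s relations as a system expressing $z^{-1}$ acting on the pair $(\Phi_n,\Phi_n^*)$ (or more precisely on $z^{-n}\Phi_n$ and $z^{-n}\Phi_n^*$) in terms of neighboring indices, and iterate. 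Each application of the recurrence contributes a step in a lattice path whose height records the current index $n$, and the weights are read off from the coefficients $\alpha_n$, $\overline{\alpha_n}$, $1-|\alpha_n|^2$ appearing in \eqref{eq:rec1}--\eqref{eq:rec2}. The \L{}ukasiewicz path model should come from the most direct expansion; the gentle Motzkin model from splitting each horizontal move into two half-steps (which accounts for the doubled horizontal coordinate $(-r,r)\to(2n-s,s)$); and the Schr\"oder model from a resummation that requires dividing by $\alpha_i$, explaining the hypothesis $\alpha_i\neq 0$.

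For the reciprocity formula I would argue more directly, without reference to the path models. The starting point is the relation between $\LL$ and complex conjugation: since $\mu$ is a positive measure, $\LL(\overline{f(z)}) = \overline{\LL(f(z))}$, and moreover $\LL(z^{-n}\overline{g}) = \overline{\LL(z^{n}g)}$ for any Laurent polynomial $g$, because on the unit circle $\overline{z} = z^{-1}$. Applying this with $g = \overline{\Phi_r(z)}\,\Phi_s(z)$ gives
\[
  \cL\!\left(z^{-(-n)}\overline{\Phi_r(z)}\,\Phi_s(z)\right)
  = \cL\!\left(z^{\,n}\,\overline{\Phi_r(z)}\,\Phi_s(z)\right)
  = \overline{\cL\!\left(z^{-n}\,\Phi_r(z)\,\overline{\Phi_s(z)}\right)}
  = \overline{\cL\!\left(z^{-n}\,\overline{\Phi_s(z)}\,\Phi_r(z)\right)}.
\]
Dividing the left-hand side by $\cL(\Phi_s\overline{\Phi_s})$ and the right-hand side by $\overline{\cL(\Phi_r\overline{\Phi_r})}$ and then correcting by the ratio of these two normalizing constants yields exactly
\[
  \mu_{-n,r,s} = \overline{\mu_{n,s,r}}\cdot\frac{\cL(\Phi_r\overline{\Phi_r})}{\cL(\Phi_s\overline{\Phi_s})}
             = \overline{\mu_{n,s,r}}\cdot\frac{\prod_{j=0}^{r-1}(1-|\alpha_j|^2)}{\prod_{j=0}^{s-1}(1-|\alpha_j|^2)},
\]
where I have used $\kappa_t/\kappa_0 = \prod_{j=0}^{t-1}(1-|\alpha_j|^2)$; note one must be careful that the paper's stated formula has $r$ and $s$ swapped relative to this in the fraction, so I would double-check the normalization bookkeeping against the definition of $\mu_{n,r,s}$ to get the exponents on the correct side (the statement in the theorem has $\prod_{j<s}$ in the numerator, so presumably their convention makes $\cL(\Phi_s\overline{\Phi_s})$ land in the numerator after the manipulation).

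The main obstacle I anticipate is not the reciprocity — that is a short formal computation once the conjugation symmetry of $\LL$ on the circle is written down correctly — but rather making the transfer-matrix expansion for the path interpretations genuinely rigorous, in particular identifying the correct weighted step set so that \emph{both} the \L{}ukasiewicz and the gentle Motzkin sums produce the same polynomial, and checking termination of the recursion (the expansion of $z^{-n}\Phi_s$ must eventually reduce to a constant times $\Phi_0 = 1$ so that $\cL$ can be evaluated). Establishing the Schr\"oder model is a secondary difficulty: the resummation that collapses certain configurations into single diagonal Schr\"oder steps will introduce denominators $\alpha_i$, and I would need to verify that after this resummation no path starting with a down-step survives, matching the stated restriction. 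A convenient way to organize all of this is to first prove the \L{}ukasiewicz interpretation carefully from the recurrence, then obtain the gentle Motzkin and Schr\"oder interpretations as weight-preserving bijections from the \L{}ukasiewicz model, which reduces the verification to purely combinatorial lemmas about the three path families rather than three separate algebraic inductions.
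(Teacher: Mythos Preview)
Your plan is essentially the paper's: the \L{}ukasiewicz interpretation is proved by extracting a recurrence for $\mu_{n,r,s}$ from Szeg\H{o}'s relations (expand $\Phi_s$ via \eqref{eq:rec1} and then unfold $\Phi^*$ iteratively via \eqref{eq:rec2}), the gentle Motzkin model is obtained by a weight-preserving bijection from \L{}ukasiewicz paths, the Schr\"oder model is derived from the three-term recurrence \eqref{eq:rec3} (and the paper also supplies the \L{}ukasiewicz-to-Schr\"oder bijection you propose, as a second proof), and your conjugate-symmetry computation for the reciprocity appears verbatim in the paper just before an additional combinatorial proof. Your suspicion about the fraction is correct: the version quoted in the introduction has $r$ and $s$ swapped, and the detailed statement in the body of the paper has $\prod_{j<r}$ in the numerator, exactly as your derivation produces.
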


These results are proved in \Cref{thm:Luka}, \Cref{thm:recipociry},
\Cref{thm:gentle Motzkin}, \Cref{thm:Sch}.

As consequences of combinatorial formulas for the moments, we not only
show the polynomiality of the generalized moments but also provide a
condition for their positivity, as stated in \Cref{thm:generalized
  Verblunsky}. We also explain relations between the three models
combinatorially. Additionally, using these combinatorial
interpretations, we derive explicit formulas for the generalized
moments of several examples of OPUC, including the circular Jacobi
polynomials and the Rogers--Szeg\H{o} polynomials.

This paper is organized as follows. In~\Cref{sec:lf}, we re-examine
basic properties of OPUC in terms of a linear functional.
In~\Cref{sec:Moments and path}, we provide three combinatorial models
for the moments: \L{}ukasiewicz paths, gentle Motzkin paths, and
Schr\"oder paths. We also show relations between them in a
combinatorial way. In~\Cref{sec:examples}, we provide explicit
formulas for generalized moments of several examples of OPUC using the
lattice path models. In~\Cref{sec:generalized_lc}, we define four
kinds of generalized linearization coefficients and interpret them
combinatorially.

\section{OPUC with respect to a linear functional \( \cL \)}\label{sec:lf}

For the combinatorics of OPUC, it is not necessary to know all their
analytic background. In this section, we revisit some basic properties
of OPUC needed from a combinatorial perspective, describing them using
only a linear functional, not an integral. There are some cautions
when writing everything in terms of a linear functional. For example,
for the OPUC with respect to the measure introduced
in~\Cref{sec:intro}, the property \( \mu_n = \overline{\mu_{-n}} \)
naturally follows from the use of an integral. But it is not so
naturally obtained when we develop the theory of OPUC in terms of a
linear functional. This will be explained in~\Cref{lem:conjugate_mu}.
In \Cref{lem:equiv_ortho_cond}, we provide an equivalent condition for
the orthogonality condition~\eqref{eq:ortho_cond} introduced below.
Moreover, \Cref{lem:conjugate_mu} and~\Cref{lem:equiv_ortho_cond} have
an advantage that they clarify the differences between OPUC and
Laurent biorthogonal polynomials.

One more interesting result is about the determinants. \Cref{prop:
  det=det} explains a relation between the determinant of the Hankel
matrix \( (\mu_{m+i-j})_{0\le i,j\le n} \) and that of
\( (\mu_{m+i,j}) \), leading to the positive definiteness of
\( (\mu_{i-j})_{0\le i,j\le n} \). 

Although this is not a survey paper, we believe this section is
helpful for those who are not familiar with the analytic background of
OPUC but are interested in the combinatorial aspects of OPUC.

\begin{defn}\label{def:OPUC}
  A sequence \( (\Phi_n(z))_{n\ge0} \) of monic polynomials with
  \( \deg\Phi_n(z) = n \) is called a sequence of \emph{(monic)
    orthogonal polynomials on the unit circle (OPUC for short)} with
  respect to \( \cL \) if it satisfies the orthogonality condition
  given in terms of a certain linear functional \( \cL \) on \( V \):
\begin{align}
\label{eq:ortho_cond}  \cL(\Phi_m(z)\overline{\Phi_n}(1/z)) = \kappa_n \delta_{m,n}, \quad \kappa_n > 0,
\end{align}
where \( V \) is the vector space of Laurent polynomials given by
\[
  V=\left\{\sum_{k \in \ZZ}f_kz^k : f_k\in\CC, f_k\neq0 \text{ for only finitely many } k\in\ZZ \right\}.
\]
\end{defn}
Note that the same linear functional \( \cL \) is given in
\cite[(1.3.13)]{Simon2005a}, where its well-definedness is also
explained. In the literature, it is written as
\( \cL(\Phi_m(z) \overline{\Phi}_n(1/z)) =
\kappa_n\delta_{n,m} \) for \( \kappa_n >0 \), which is equivalent to
our definition since \( 1/\overline{z} = z \) for \( z \) on the unit
circle \( \partial \DD \) in the complex plane.

To define a linear functional \( \cL \), it is enough to determine the values of \( \cL \) on a basis for \( V \).
For any given monic polynomial sequence \( (\Phi_n(z))_{n\ge0} \) with \( \deg\Phi_n(z) = n \), two bases for \( V \) are
\begin{enumerate}
\item \( \{z^n : n \in \ZZ \} \), and
\item \( \{\Phi_n(z):n\ge0\}\cup \{\overline{\Phi_n}(1/z):n>0 \} \).
\end{enumerate}
 The following lemma provides a condition for \( \mu_n = \overline{\mu_{-n}} \), which we call the reciprocity of the moments, in terms of the values of \( \cL \) on a basis.

\begin{lem}\label{lem:conjugate_mu}
  For a given monic polynomial sequence \( (\Phi_n(z))_{n\ge0} \) with
  \( \deg\Phi_n(z) = n \), let \( \cL \) be the linear functional on
  \( V \) defined by \( \cL(1) = 1 \) and
  \( \cL(\Phi_n(z)) = \cL(\overline{\Phi_n}(1/z)) = 0 \) for positive
  integers \( n \). Then, for \( \mu_n = \cL(z^{-n}) \), we have
  \( \mu_n = \overline{\mu_{-n}} \) for positive integers \( n \).
\end{lem}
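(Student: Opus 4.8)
The plan is to expand $z^{-n}$ in the second basis $\{\Phi_k(z):k\ge0\}\cup\{\overline{\Phi_k}(1/z):k>0\}$ and to exploit the fact that $\cL$ kills every basis element except the constant $1$. Write $z^{-n}=c_0+\sum_{k\ge1}a_k\Phi_k(z)+\sum_{k\ge1}b_k\,\overline{\Phi_k}(1/z)$ for suitable complex coefficients (only finitely many nonzero, since $z^{-n}\in V$). Applying $\cL$ and using $\cL(1)=1$, $\cL(\Phi_k(z))=\cL(\overline{\Phi_k}(1/z))=0$ for $k\ge1$, we get $\mu_n=\cL(z^{-n})=c_0$, the coefficient of $1$ in this expansion. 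So the whole problem reduces to showing that the constant term of $z^{-n}$ in this basis is the complex conjugate of the constant term of $z^{n}$.

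The key step is to understand how the two expansions of $z^{n}$ and $z^{-n}$ are related by the bar-and-reciprocal operation. First I would observe that $z^{n}=\Phi_n(z)+(\text{lower-degree terms in }z)$, and that the lower-degree monomials $z^{n-1},\dots,z^{1},z^{0}$ lie in the span of $\Phi_0(z),\dots,\Phi_{n-1}(z)$, while negative powers do not appear; hence in the second basis $z^{n}=\sum_{k=0}^{n}a_k\Phi_k(z)$ with $a_n=1$, and $\mu_{-n}=\cL(z^{n})=a_0$. Next, I apply the operation $f(z)\mapsto \overline{f}(1/z)$ (the map sending $\sum f_k z^k$ to $\sum \overline{f_k}z^{-k}$), which is conjugate-linear, sends $z^{n}\mapsto z^{-n}$, sends $\Phi_k(z)\mapsto \overline{\Phi_k}(1/z)$, and sends $1\mapsto 1$. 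Applying it to $z^{n}=\sum_{k=0}^{n}a_k\Phi_k(z)$ gives $z^{-n}=\sum_{k=0}^{n}\overline{a_k}\,\overline{\Phi_k}(1/z)$, which is precisely an expansion of $z^{-n}$ in the second basis with constant term $\overline{a_0}$ (the $k=0$ term, since $\overline{\Phi_0}(1/z)=1$). By uniqueness of basis expansions, $c_0=\overline{a_0}$, i.e.\ $\mu_n=\overline{\mu_{-n}}$.

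The main obstacle — really the only subtlety — is making the conjugate-linear involution $f(z)\mapsto\overline f(1/z)$ rigorous as a map on all of $V$ and checking it behaves correctly on the basis $(2)$: that it is a bijection $V\to V$, that it fixes $1$, that it interchanges the two halves of basis $(2)$ (which follows directly from the definition of $\overline{\Phi_k}(1/z)$ and the fact that $\Phi_k$ is monic of degree $k$ with real... no, complex coefficients — one just needs $\overline{\overline{\Phi_k}}(1/z)$ evaluated back is $\Phi_k(z)$, i.e.\ the map is an involution), and that it is compatible with taking (finite) linear combinations in the conjugate-linear sense. Once these bookkeeping points are settled, the argument is immediate; there is no estimate or limiting argument involved, only linear algebra over $\CC$ in the infinite-dimensional but ``locally finite'' space $V$.
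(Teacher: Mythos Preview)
Your proof is correct and is essentially the same argument as the paper's, just packaged differently. The paper writes $\Phi_n(z)=\sum_k c_{n,k}z^k$, encodes the conditions $\cL(\Phi_n)=0$ as the matrix equation $(c_{i,j})(\mu_{-i})=(1,0,0,\dots)^T$, inverts to get $\mu_{-i}=d_{i,0}$, and then repeats with $\overline{\Phi_n}(1/z)$ to get $\mu_i=\overline{d_{i,0}}$. Your coefficients $a_k$ in $z^n=\sum_k a_k\Phi_k(z)$ are exactly the entries $d_{n,k}$ of that inverse matrix, so your $a_0=\mu_{-n}$ is the paper's $d_{n,0}=\mu_{-n}$; the paper then observes that conjugating the whole matrix equation yields $\mu_i=\overline{d_{i,0}}$, which is precisely what you obtain by applying the conjugate-linear involution $f\mapsto\overline{f}(1/z)$ to the basis expansion. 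The involution viewpoint is a bit more conceptual, the matrix viewpoint a bit more explicit, but the two arguments are the same computation.
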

\begin{proof}
 Let \( \Phi_n(z)=\sum_{k=0}^n c_{n,k}z^k \) with \( c_{n,n} = 1 \).
Since \( \cL(\Phi_0(z))=\cL(1)=1 \) and \( \cL(\Phi_n(z))=0 \) for \(
n\ge1 \), we have
   \[ (c_{i,j})_{i,j\ge0}(\mu_{-i})_{i\ge0}=(1,0,0,\dots)^T.
   \] The matrix \( (c_{i,j})_{i,j\ge0} \) is a lower triangular
matrix with all 1's on the main diagonal and hence it is invertible.
Let \( (c_{i,j})_{i,j\ge0}^{-1}=(d_{i,j})_{i,j\ge0} \). Then \(
\mu_{-i} = d_{i,0} \). Similarily, since \(
\cL(\overline{\Phi_0}(1/z))=\cL(1)=1 \) and \(
\cL(\overline{\Phi_n}(1/z))=0 \) for \( n\geq 1 \), we have
   \[ (\overline{c_{i,j}})_{i,j\ge0}(\mu_{i})_{i\ge0}=(1,0,0,\dots)^T,
   \] and hence \( \mu_i =\overline{d_{i,0}} = \overline{\mu_{-i}} \),
as required.
\end{proof}

Next, we provide an equivalent condition for the orthogonality
condition~\eqref{eq:ortho_cond}.

\begin{lem}\label{lem:equiv_ortho_cond}
  Let \( ( \Phi_n(z) )_{n\geq 0} \) be an arbitrary sequence of monic
  polynomials with \( \deg\Phi_n(z)=n \). A linear functional
  \( \cL \) on \( V \) satisfies
  \begin{equation}\label{eq:lem_ortho}
    \cL(\Phi_m(z)\overline{\Phi_n}(1/z)) = \kappa_n \delta_{m,n}
  \end{equation}
  for some (possibly zero) constant \( \kappa_n \), and for all
  nonnegative integers \( n \) and \( m \) if and only if it satisfies
  \begin{align}
\label{eq:equiv_cond1}  \cL(\overline{\Phi_n}(1/z)) &= 0, \qquad\quad \;\; n\ge1,\\
 \label{eq:equiv_cond2} \cL(\Phi_m(z)z^{-n}) &= \kappa_n\delta_{m,n}, \quad  0 \le n \le m.
\end{align}
 
\end{lem}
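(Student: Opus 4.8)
The plan is to prove the two implications separately, with almost all of the work concentrated in the converse. For the forward implication, assume \eqref{eq:lem_ortho} holds for all $m,n\ge 0$. Taking $m=0$ gives $\cL(\overline{\Phi_n}(1/z))=\kappa_n\delta_{0,n}$, which is exactly \eqref{eq:equiv_cond1}. For \eqref{eq:equiv_cond2} I would expand $z^{-n}$ in the basis $\{\overline{\Phi_k}(1/z):0\le k\le n\}$ of $\mathrm{span}\{1,z^{-1},\dots,z^{-n}\}$, say $z^{-n}=\sum_{k=0}^{n}e_{n,k}\,\overline{\Phi_k}(1/z)$; this change of basis is unitriangular because, viewed as a polynomial in $z^{-1}$, each $\overline{\Phi_k}(1/z)$ has degree $k$ and leading coefficient $1$. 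Then $\cL(\Phi_m(z)z^{-n})=\sum_{k=0}^{n}e_{n,k}\,\cL(\Phi_m(z)\overline{\Phi_k}(1/z))=\sum_{k=0}^{n}e_{n,k}\kappa_k\delta_{m,k}$, and when $0\le n\le m$ the summation index satisfies $k\le n\le m$, so the Kronecker delta forces $k=m$, which is possible only for $m=n$; one reads off $\cL(\Phi_m(z)z^{-n})=\kappa_n\delta_{m,n}$.

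For the converse, assume \eqref{eq:equiv_cond1} and \eqref{eq:equiv_cond2}. The easy half is the range $m\ge n$: writing $\Phi_n(z)=\sum_k c_{n,k}z^k$, we have $\overline{\Phi_n}(1/z)=\sum_{j=0}^{n}\overline{c_{n,j}}\,z^{-j}$, so $\cL(\Phi_m(z)\overline{\Phi_n}(1/z))=\sum_{j=0}^{n}\overline{c_{n,j}}\,\cL(\Phi_m(z)z^{-j})$, and since $0\le j\le n\le m$ every summand is controlled by \eqref{eq:equiv_cond2}, giving $\sum_{j=0}^{n}\overline{c_{n,j}}\kappa_j\delta_{m,j}=\kappa_n\delta_{m,n}$. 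In particular $\cL(\Phi_m(z)\overline{\Phi_n}(1/z))=0$ whenever $m>n$.

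The only substantive case is $m<n$, where \eqref{eq:equiv_cond2} gives no information about $\cL(\Phi_m(z)z^{-j})$ once $j>m$, so the direct expansion stalls and I would instead invoke the reciprocity of the moments. If $\kappa_0=0$ then \eqref{eq:equiv_cond2} forces $\mu_0=0$, and then the unitriangular recursions coming from \eqref{eq:equiv_cond1}--\eqref{eq:equiv_cond2} make every moment vanish, so $\cL\equiv 0$ and there is nothing to prove; assume henceforth $\kappa_0\ne 0$. Then \eqref{eq:equiv_cond1} together with the $n=0$ case of \eqref{eq:equiv_cond2} (which gives $\cL(\Phi_m(z))=\kappa_0\delta_{m,0}$, in particular $\cL(1)=\kappa_0$) are precisely the hypotheses of \Cref{lem:conjugate_mu} applied to $\cL/\kappa_0$, whence $\overline{\kappa_0}\,\mu_k=\kappa_0\,\overline{\mu_{-k}}$ for all $k\in\ZZ$. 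Expanding into monomials, $\cL(\Phi_m(z)\overline{\Phi_n}(1/z))=\sum_{a=0}^{m}\sum_{b=0}^{n}c_{m,a}\overline{c_{n,b}}\,\mu_{b-a}$, and applying this moment relation and then interchanging the roles of $a$ and $b$ yields the symmetry
\[
  \kappa_0\,\overline{\cL\bigl(\Phi_n(z)\overline{\Phi_m}(1/z)\bigr)}=\overline{\kappa_0}\,\cL\bigl(\Phi_m(z)\overline{\Phi_n}(1/z)\bigr).
\]
Since $m<n$, the easy half applied with first index $n$ and second index $m$ gives $\cL(\Phi_n(z)\overline{\Phi_m}(1/z))=0$, so the left side vanishes; as $\kappa_0\ne 0$ this forces $\cL(\Phi_m(z)\overline{\Phi_n}(1/z))=0$, which completes the converse.

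The main obstacle I anticipate is making transparent why \eqref{eq:equiv_cond1} is needed at all: \eqref{eq:equiv_cond2} by itself only pins down the ``upper-triangular part'' together with the diagonal of the Gram-type matrix $\bigl(\cL(\Phi_m(z)\overline{\Phi_n}(1/z))\bigr)_{m,n\ge0}$, and the real content of the lemma is that \eqref{eq:equiv_cond1}, funneled through the moment reciprocity of \Cref{lem:conjugate_mu}, supplies exactly the extra conjugate symmetry needed to annihilate the strictly upper part as well. The degenerate value $\kappa_0=0$ requires a separate but completely trivial sentence, as indicated above.
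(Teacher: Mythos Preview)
Your argument is correct and follows essentially the same route as the paper: the forward direction is the same triangular expansion (the paper phrases it as an induction on $n$, you as a one-shot change of basis, but these are the same computation), and for the converse in the range $m<n$ both you and the paper reduce to the already-proved case $m\ge n$ via the moment reciprocity of \Cref{lem:conjugate_mu}. Your treatment is in fact slightly more careful than the paper's, since you handle the normalization $\cL(1)=\kappa_0$ explicitly (deriving $\overline{\kappa_0}\,\mu_k=\kappa_0\,\overline{\mu_{-k}}$ and disposing of the degenerate case $\kappa_0=0$ separately), whereas the paper tacitly invokes \Cref{lem:conjugate_mu} as stated with $\cL(1)=1$.
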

\begin{proof}
  Consider a linear functional \( \cL \)
  satisfying~\eqref{eq:lem_ortho}. We obtain~\eqref{eq:equiv_cond1}
  immediately from~\eqref{eq:lem_ortho} by letting \( m=0 \). To
  show~\eqref{eq:equiv_cond2}, we use induction on \( n \). The base
  case \( n = 0 \) follows from~\eqref{eq:lem_ortho}. Suppose that the
  equation~\eqref{eq:equiv_cond2} holds for \( n = 0,\dots, N-1 \).
  Since \( \Phi_N(z) \) is monic, we can write
  \( \overline{\Phi_N}(1/z) = z^{-N} + \sum_{n=0}^{N-1}c_nz^{-n} \)
  for some \( c_n \). Thus, for \( N \) and \( m \) with
  \( m\ge N\ge 1 \), we have
  \[
    \kappa_N \delta_{m,N} = \cL(\Phi_m(z)\overline{\Phi_N}(1/z)) =\cL(\Phi_m(z)z^{-N}) +\sum_{n=0}^{N-1}c_n\cL(\Phi_m(z)z^{-n}) = \cL(\Phi_m(z)z^{-N}),
  \]
  as required.

  To prove the converse, consider a linear functional \( \cL \)
  satisfying \eqref{eq:equiv_cond1} and~\eqref{eq:equiv_cond2}. For
  \( n \le m \), by the linearity of \( \cL \), we have
  \begin{align}
    \label{eq:equiv_pf}    \cL(\Phi_m(z)\overline{\Phi_n}(1/z)) = \cL(\Phi_m(z)z^{-n}) = \kappa_n\delta_{n,m}.
  \end{align}
  Suppose \( n > m \). By \Cref{lem:conjugate_mu} and the linearity of
  \( \cL \), we have
  \( \cL(f(z)) = \overline{\cL(\overline{f}(1/z))} \) for any \( f \)
  in \( V \). Thus,
  \[
    \cL(\Phi_m(z)\overline{\Phi_n}(1/z)) = \overline{\cL(\Phi_n(z)\overline{\Phi_m}(1/z))}  = 0,
  \]
  where we use \eqref{eq:equiv_pf} for the last equality. This
  completes the proof.
\end{proof}

\begin{remark}\label{rmk:LBP_OPUC}
  In this remark, we clarify potentially confusing aspects
  of~\Cref{lem:equiv_ortho_cond} and describe the differences between
  OPUC and Laurent biorthogonal polynomials. For a given sequence
  \( (\Phi_n(z))_{n\ge0} \) of OPUC with respect to \( \cL \), it is
  known \cite{Simon2005a} that \eqref{eq:ortho_cond} is equivalent
  to~\eqref{eq:equiv_cond2}. Let us make it clear that this does not
  contradict~\Cref{lem:equiv_ortho_cond}. \Cref{lem:equiv_ortho_cond}
  presents conditions~\eqref{eq:equiv_cond1}
  and~\eqref{eq:equiv_cond2} as an equivalent condition for an
  "arbitrary" polynomial sequence satisfying (2.1), not specifically
  for OPUC.

  In fact, a polynomial sequence defined only
  by~\eqref{eq:equiv_cond2} is a sequence of Laurent biorthogonal
  polynomials, see \cite{Zhedanov1998}.
  Rephrasing~\Cref{lem:equiv_ortho_cond} using Laurent biorthogonal
  polynomials, it means that OPUC are the special case of Laurent
  biorthogonal polynomials that they additionally
  satisfy~\eqref{eq:equiv_cond1}. Equivalently, together with
  \Cref{lem:conjugate_mu}, OPUC are considered as the special case of
  Laurent biorthogonal polynomials with the additional condition
  \( \mu_n = \overline{\mu_{-n}} \).
\end{remark}

One can prove the following linear functional version of Verblunsky's
theorem using~\Cref{lem:conjugate_mu} and~\Cref{lem:equiv_ortho_cond}.
However, since the proof follows essentially the same argument as in
\cite{Simon2005a}, we omit the details.

\begin{thm}[The linear functional version of Verblunsky's theorem]\label{thm:linear functional}
  Let \( ( \Phi_n(z) )_{n\geq 0} \) be a sequence of monic polynomials
  with \( \deg\Phi_n(z)=n \) and \( \Phi_0(z) = 1 \). Then it
  satisfies Szeg\H{o}'s recurrence relations:
\begin{align*}
  \Phi_{n+1}(z) &= z\Phi_n(z) - \overline{\alpha_n}\Phi_n^*(z), \text{ or equivalently, }\\
  \Phi_{n+1}^*(z) &= \Phi_n^*(z) -\alpha_nz\Phi_n(z)
\end{align*}
for some complex sequence \( \left( \alpha_n \right)_{n \geq 0} \)
with \( |\alpha_n| < 1 \) if and only if there is a unique linear
functional \( \cL \) on \( V \) satisfying \( \cL(1)= \kappa_0 = 1 \)
together with the orthogonality condition \eqref{eq:ortho_cond}:
\begin{equation*}
  \cL(\Phi_m(z)\overline{\Phi_n}(1/z)) = \kappa_n \delta_{m,n}, \quad \mbox{for } \kappa_n > 0.
\end{equation*}
\end{thm}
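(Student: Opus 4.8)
The plan is to prove the two directions separately, using the two bases for $V$ listed before \Cref{lem:conjugate_mu} and the equivalent form of the orthogonality condition provided by \Cref{lem:equiv_ortho_cond}.

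\medskip

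\noindent\textbf{The ``only if'' direction.} Assume $(\Phi_n(z))_{n\ge0}$ satisfies Szeg\H{o}'s recurrence relations for some $(\alpha_n)_{n\ge0}$ with $|\alpha_n|<1$. First I would note that the two forms of the recurrence are genuinely equivalent: from $\Phi_{n+1}(z)=z\Phi_n(z)-\overline{\alpha_n}\Phi_n^*(z)$ one applies the $*$ operation (which sends $z\mapsto1/z$, conjugates coefficients, and multiplies by $z^{n+1}$) and checks $(z\Phi_n)^*=\Phi_n^*$ and $(\Phi_n^*)^*=\Phi_n$, using that $\deg\Phi_n=n$; this yields $\Phi_{n+1}^*(z)=\Phi_n^*(z)-\alpha_n z\Phi_n(z)$, and conversely. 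Next, define $\cL$ by prescribing its values on the basis (2): set $\cL(1)=1$ and $\cL(\Phi_n(z))=\cL(\overline{\Phi_n}(1/z))=0$ for $n\ge1$. This determines a unique linear functional on $V$. By \Cref{lem:conjugate_mu} the reciprocity $\mu_n=\overline{\mu_{-n}}$ holds, and as in the proof of \Cref{lem:equiv_ortho_cond} this gives $\cL(f(z))=\overline{\cL(\overline f(1/z))}$ for all $f\in V$. By \Cref{lem:equiv_ortho_cond}, it remains to verify \eqref{eq:equiv_cond2}, i.e. $\cL(\Phi_m(z)z^{-n})=\kappa_n\delta_{m,n}$ for $0\le n\le m$, together with $\kappa_n>0$; indeed \eqref{eq:equiv_cond1} is immediate from $\cL(\overline{\Phi_n}(1/z))=0$. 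I would prove \eqref{eq:equiv_cond2} by induction on $n$ using the recurrence: $\cL(\Phi_m(z)z^{-n})$ can be rewritten by expressing $z\Phi_{n-1}$ via $\Phi_n+\overline{\alpha_{n-1}}\Phi_{n-1}^*$, shifting the problem down in degree, and using that $\cL(\Phi_m(z)\overline{\Phi_k}(1/z))$ vanishes for $k<m$ by the inductive hypothesis (combined with the conjugate-symmetry relation for the case $k>m$, which however does not arise here). The diagonal values then satisfy $\kappa_{n+1}=(1-|\alpha_n|^2)\kappa_n$, so $\kappa_n=\prod_{j=0}^{n-1}(1-|\alpha_j|^2)>0$ since $|\alpha_j|<1$; this is where the strict inequality $|\alpha_n|<1$ is used.

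\medskip

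\noindent\textbf{The ``if'' direction.} Conversely, suppose a unique such $\cL$ exists with $\cL(1)=1$ and $\cL(\Phi_m(z)\overline{\Phi_n}(1/z))=\kappa_n\delta_{m,n}$, $\kappa_n>0$. I would produce the Verblunsky coefficients directly. Since $\Phi_{n+1}(z)-z\Phi_n(z)$ has degree at most $n$, it lies in the span of $\{\Phi_0,\dots,\Phi_n\}$; pairing against $\overline{\Phi_k}(1/z)$ for $0\le k\le n-1$ under $\cL$ and using orthogonality (plus $\cL(z\Phi_n(z)\overline{\Phi_k}(1/z))=\cL(\Phi_n(z)\,z\,\overline{\Phi_k}(1/z))$ and the fact that $z\overline{\Phi_k}(1/z)$ is a Laurent polynomial supported in degrees $-k,\dots,1$, against which $\Phi_n$ is orthogonal for $k<n$) shows all those coefficients vanish, so $\Phi_{n+1}(z)-z\Phi_n(z)=c_n\Phi_0^* \cdot(\text{something})$; more precisely one shows $\Phi_{n+1}(z)-z\Phi_n(z)$ is a scalar multiple of $\Phi_n^*(z)$, since $\Phi_n^*$ spans the orthogonal complement, inside degree-$\le n$ polynomials, of $\{\Phi_0,\dots,\Phi_{n-1}\}$ with respect to the relevant pairing. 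Write the scalar as $-\overline{\alpha_n}$; this defines $\alpha_n$ and gives \eqref{eq:rec1}, and the equivalent \eqref{eq:rec2} follows by applying $*$ as above. Finally, to see $|\alpha_n|<1$: comparing the recurrence with $\kappa_{n+1}=\cL(\Phi_{n+1}(z)\overline{\Phi_{n+1}}(1/z))$, expand using \eqref{eq:rec1} and the orthogonality relations to get $\kappa_{n+1}=(1-|\alpha_n|^2)\kappa_n$; since $\kappa_{n+1}>0$ and $\kappa_n>0$, we conclude $|\alpha_n|^2<1$.

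\medskip

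\noindent\textbf{Main obstacle.} The routine parts are the manipulations of $*$ and the bookkeeping of which Laurent monomials $\Phi_n$ is orthogonal to. The genuinely delicate point is the uniqueness and existence of $\cL$ in the ``if'' direction phrased correctly: one must check that prescribing the orthogonality values is consistent, i.e. that the system determining $\cL$ on basis (1) from the data on basis (2) is solvable — this is where \Cref{lem:conjugate_mu} is essential, because without the reciprocity $\mu_n=\overline{\mu_{-n}}$ one only gets Laurent biorthogonal polynomials (as explained in \Cref{rmk:LBP_OPUC}), not OPUC. So the heart of the argument is verifying that the conjugate-symmetry $\cL(f(z))=\overline{\cL(\overline f(1/z))}$ is forced, and then feeding it into \Cref{lem:equiv_ortho_cond} to close the loop. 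I expect the positivity bookkeeping ($\kappa_{n+1}=(1-|\alpha_n|^2)\kappa_n$) to be short once the recurrence is in hand, and the extraction of $\alpha_n$ from the structure of degree-$\le n$ polynomials modulo the orthogonal complement to be the step requiring the most care.
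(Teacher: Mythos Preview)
Your ``only if'' framework matches the paper: define $\cL$ on the basis $\{\Phi_n(z)\}\cup\{\overline{\Phi_n}(1/z)\}$, invoke \Cref{lem:conjugate_mu} for conjugate symmetry, reduce to \eqref{eq:equiv_cond2} via \Cref{lem:equiv_ortho_cond}, and get $\kappa_n=\prod_{j<n}(1-|\alpha_j|^2)$. One wrinkle: your induction sketch (``on $n$, expressing $z\Phi_{n-1}$ via $\Phi_n+\overline{\alpha_{n-1}}\Phi_{n-1}^*$'') does not parse as written, since $z\Phi_{n-1}$ does not appear in $\cL(\Phi_m z^{-n})$. The paper inducts on $m$, writing $\Phi_m=z\Phi_{m-1}-\overline{\alpha_{m-1}}\Phi_{m-1}^*$ to get $\cL(\Phi_m z^{-n})=\cL(\Phi_{m-1}z^{-(n-1)})-\overline{\alpha_{m-1}}\cL(\Phi_{m-1}^*z^{-n})$; both terms vanish by induction and conjugate symmetry. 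This is likely what you meant, but the indexing needs to be fixed.

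The ``if'' direction has a genuine gap. You claim that pairing $\Phi_{n+1}-z\Phi_n$ against $\overline{\Phi_k}(1/z)$ for $0\le k\le n-1$ gives zero, arguing that $z\overline{\Phi_k}(1/z)$ is supported in degrees $-k,\dots,1$ and $\Phi_n$ is orthogonal to all these monomials. This is false: $\Phi_n$ is orthogonal to $z^0,z^{-1},\dots,z^{-(n-1)}$, but \emph{not} to $z^1$. Concretely, for $n=1$, $k=0$: $\cL(z\Phi_1(z))=\mu_{-2}-\overline{\alpha_0}\mu_{-1}=\overline{\alpha_1}(1-|\alpha_0|^2)\ne0$ in general. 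Correspondingly, $\Phi_2-z\Phi_1=-\overline{\alpha_1}\Phi_1^*$ has a nonzero $\Phi_0$-component when expanded in the $\{\Phi_k\}$-basis, so your vanishing claim cannot hold. Relatedly, $\Phi_n^*$ does \emph{not} span the orthogonal complement of $\{\Phi_0,\dots,\Phi_{n-1}\}$ inside degree-$\le n$ polynomials; that complement is spanned by $\Phi_n$.

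The correct characterization, which is what the paper uses, is that $\Phi_n^*$ spans the orthogonal complement of $\{z,z^2,\dots,z^n\}$ inside degree-$\le n$ polynomials (since $\langle\Phi_n^*,z^k\rangle=\cL(z^{n-k}\overline{\Phi_n}(1/z))=0$ for $1\le k\le n$). One then checks directly that $f=\Phi_{n+1}-z\Phi_n$ satisfies $\langle f,z^k\rangle=\cL(\Phi_{n+1}z^{-k})-\cL(\Phi_n z^{-(k-1)})=0$ for $1\le k\le n$, so $f$ is a scalar multiple of $\Phi_n^*$. The paper phrases this via the reverse polynomial $g(z)=z^n\overline f(1/z)$, showing $\cL(z^{-m}g)=0$ for $0\le m<n$ and hence $g=c_n\Phi_n$; this is the same computation in dual form. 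Once you have $f=-\overline{\alpha_n}\Phi_n^*$ (with $\alpha_n=-\overline{\Phi_{n+1}(0)}$), your derivation of $\kappa_{n+1}=(1-|\alpha_n|^2)\kappa_n$ and $|\alpha_n|<1$ is fine.
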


Note that one can see that
\[
 \Phi_n(0) = -\overline{\alpha_{n-1}} \quad\mbox{and}\quad \kappa_n = (1-|\alpha_{n-1}|^2)\kappa_{n-1} = \cdots = \prod_{i=0}^{n-1}(1-|\alpha_i|^2).
\]
By convention, we set \( \alpha_{-1} = -1 \).

For a given linear functional \( \cL \) on the vector space \( V \) of
Laurent polynomials, define a map
\( \langle \cdot,\cdot \rangle_\cL: V\times V \to \CC \) by
\begin{align}
\label{eq:inner_product}  \langle f,g \rangle_{\cL}=\cL(f(z)\overline{g}(1/z)).
\end{align}
We write \( f \) and \( g \) instead of \( f(z) \) and \( g(z) \)
inside \( \langle \cdot,\cdot \rangle_\cL \), unless we specifically
consider expressions such as \( f(1/z) \). For brevity, we omit the
subscript \( \cL \) if it is clear from the context.

The following proposition shows that the determinant of a shifted
version of the Toeplitz matrix \( (\mu_{i-j})_{i,j \geq 0} \) is equal
to the determinant of the matrix related to the generalized moments
with a certain factor. Using this, the determinant of the Toeplitz
matrix can be easily obtained as shown in \Cref{cor:det}.

\begin{prop}\label{prop: det=det}
  For an integer \( m \) and a nonnegative integer \( n \), we have
  \begin{equation}
    \det\left( \mu_{m+i-j} \right)_{0\le i,j\le n} = \prod_{k=0}^{n-1}(1-|\alpha_k|^2)^{n-k} \cdot \det\left( \mu_{m+i,j} \right)_{0\le i,j\le n},
  \end{equation}
  where \( \mu_{i,j} =  \langle \Phi_i,z^j \rangle/\langle \Phi_i, \Phi_i \rangle \).
  We set \( \prod_{k=0}^{n-1}(1-|\alpha_k|^2)^{n-k} = 1 \) when \( n=0 \).
\end{prop}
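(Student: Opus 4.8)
The plan is to exhibit an explicit factorization of the Toeplitz matrix $(\mu_{m+i-j})_{0\le i,j\le n}$ into a product involving the matrix $(\mu_{m+i,j})_{0\le i,j\le n}$, where the extra unimodular-type factor collects the normalizing constants $\langle\Phi_j,\Phi_j\rangle = \kappa_j = \prod_{k=0}^{j-1}(1-|\alpha_k|^2)$. The key observation is that $\mu_{m+i,j}$ is, by definition, $\langle\Phi_i(z), z^{j}\rangle/\kappa_i$ with the index $i$ of $\Phi$ tied to the row, not to the variable exponent — so I must be slightly careful about which index varies. Write $\Phi_i(z)=\sum_{k=0}^{i}c_{i,k}z^{k}$ with $c_{i,i}=1$ and $C=(c_{i,k})_{0\le i,k\le n}$ lower triangular with unit diagonal. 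Then for each fixed $i$,
\[
  \langle \Phi_i(z), z^{j}\rangle = \cL\!\left(\Phi_i(z)\,z^{-j}\right) = \sum_{k=0}^{i} c_{i,k}\,\cL(z^{k-j}) = \sum_{k=0}^{n} c_{i,k}\,\mu_{j-k},
\]
so replacing $j$ by $m+j$ and dividing by $\kappa_i$ gives $\mu_{m+i,?}$—wait, this has the wrong pairing. The correct route is to use the \emph{other} basis: expand $z^{j}$ or, better, run the computation so that the shift $m$ lands on the Toeplitz index. Concretely I would show
\[
  \bigl(\mu_{m+i,j}\bigr)_{0\le i,j\le n} \;=\; D^{-1}\, C \,\bigl(\mu_{m+i-j}\bigr)_{0\le i,j\le n}\, \overline{C}^{\,T}\, \text{(up to reindexing)},
\]
where $D=\operatorname{diag}(\kappa_0,\dots,\kappa_n)$; the first factor $C$ comes from expanding $\Phi_i$ in the rows, and the transpose-conjugate factor $\overline{C}^{T}$ comes from the fact that, by \Cref{lem:conjugate_mu} and linearity, $\langle\Phi_i(z),z^{j}\rangle = \cL(\Phi_i(z)z^{-j})$ can be rewritten with the second slot also expanded against the $\Phi$-basis — but since $z^{j}$ is a pure monomial only the identity part survives, which is why we get $\overline{C}^T$ acting on $\mu_{m+i-j}$ on the right rather than something more complicated. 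Let me instead present it in the cleanest form: I will prove the single matrix identity
\[
  C\,\bigl(\mu_{m+i-j}\bigr)_{0\le i,j\le n} = \bigl(\langle\Phi_i(z),z^{m+j}\rangle\bigr)_{0\le i,j\le n} = D\,\bigl(\mu_{m+i,j}\bigr)_{0\le i,j\le n}\,\quad\text{? check indices},
\]
and then take determinants.

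Here is the clean version I would actually write. For $0\le i\le n$, since $\Phi_i(z)=\sum_{k=0}^{i}c_{i,k}z^{k}$, we have for every integer $\ell$ that $\cL(\Phi_i(z)z^{-\ell}) = \sum_{k=0}^{i} c_{i,k}\mu_{\ell-k}$. Taking $\ell=m+j$ and noting $\langle\Phi_i(z),z^{m+j}\rangle = \cL(\Phi_i(z)z^{-m-j})$, this reads
\[
  \langle\Phi_i(z),z^{m+j}\rangle \;=\; \sum_{k=0}^{n} c_{i,k}\,\mu_{m+j-k}\qquad(0\le i,j\le n),
\]
which in matrix form is $\bigl(\langle\Phi_i(z),z^{m+j}\rangle\bigr) = C \cdot \bigl(\mu_{m+j-k}\bigr)_{k,j}$, i.e. $C$ times the matrix $(\mu_{m-k+j})_{k,j}$ which — after the substitution $(k,j)\mapsto$ transposing roles — has the same determinant (up to a harmless sign that is in fact $+1$ since it is a simultaneous reversal, or simply because a Toeplitz matrix of this shape is symmetric under $i\leftrightarrow j$ with $m$ replaced by $-m$, and determinants are unaffected). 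On the other hand $\langle\Phi_i(z),z^{m+j}\rangle = \langle\Phi_i(z),\Phi_i(z)\rangle\cdot\mu_{m+i,j}$ is false in general because $\mu_{m+i,j}$ uses index $i$ on $\Phi$ — good, that is exactly the definition: $\mu_{m+i,j} := \langle\Phi_i(z),z^j\rangle/\langle\Phi_i(z),\Phi_i(z)\rangle$ would give exponent $j$, not $m+j$, so I need $\mu_{m+i,j}$ to be defined with a shifted exponent. Reading the statement again: $\mu_{i,j}=\langle\Phi_i(z),z^j\rangle/\langle\Phi_i(z),\Phi_i(z)\rangle$, so $\mu_{m+i,j}$ means take the moment-index $m+i$, i.e. $\langle\Phi_{m+i}(z),z^{j}\rangle/\kappa_{m+i}$ — but that requires $m+i\ge0$ and uses $(n+1)$ different $\Phi$'s of growing degree. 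So the correct pairing is: rows indexed by $i$ carry $\Phi_{m+i}$, columns by $j$ carry $z^j$. Then $\langle\Phi_{m+i}(z),z^{j}\rangle = \sum_{k} c_{m+i,k}\mu_{j-k}$; this is not a finite $(n{+}1)\times(n{+}1)$ product of our fixed matrices, so the right statement must instead be read with $m\ge 0$ and a rectangular truncation. Given this subtlety, the honest plan is:

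\textbf{Step 1.} Fix the definition: $\mu_{m+i,j} = \langle\Phi_{m+i}(z),z^{j}\rangle/\kappa_{m+i}$ for $0\le i,j\le n$ (assuming $m\ge 0$; the general integer $m$ follows by the reciprocity of \Cref{lem:conjugate_mu} applied entrywise, conjugating and reversing indices, which only changes the determinant by a conjugation and a power of $(-1)$ that one checks is absorbed into the real positive product $\prod(1-|\alpha_k|^2)^{n-k}$). \textbf{Step 2.} Use $\Phi_{m+i}(z)=\sum_{k\ge0}c_{m+i,k}z^k$ to write $\langle\Phi_{m+i}(z),z^{j}\rangle = \sum_{k}c_{m+i,k}\mu_{j-k}$, i.e.
\[
  \bigl(\kappa_{m+i}\,\mu_{m+i,j}\bigr)_{0\le i,j\le n} \;=\; \widetilde C \cdot \bigl(\mu_{j-k}\bigr)_{0\le k\le ?,\,0\le j\le n},
\]
where $\widetilde C=(c_{m+i,k})$; since $c_{m+i,k}=0$ for $k>m+i$ but can be nonzero for $k$ up to $m+n$, truncate/pad so that the relevant square block is $(\mu_{m+i-j})_{0\le i,j\le n}$ after re-indexing $k = m+i'-\text{(lower part)}$ — the point being that modulo the columns $k<m$, which contribute the Toeplitz block with the \emph{monic} leading structure, $\widetilde C$ restricts to a unit-lower-triangular matrix on the relevant $(n{+}1)\times(n{+}1)$ block. \textbf{Step 3.} Take determinants: the unit-triangular factor contributes $1$, the diagonal scaling contributes $\prod_{i=0}^{n}\kappa_{m+i}^{-1}$, and re-expressing $\kappa_{m+i}=\prod_{k=0}^{m+i-1}(1-|\alpha_k|^2)$ and cancelling the common $\prod_{k=0}^{m-1}$-part (which appears equally on both sides because the Toeplitz determinant also carries it — this is where the bookkeeping lives), one is left with exactly $\prod_{k=0}^{n-1}(1-|\alpha_k|^2)^{n-k}$.

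\textbf{Main obstacle.} The genuine difficulty is none of the algebra but the \emph{index bookkeeping}: pinning down precisely which square block of the bi-infinite change-of-basis matrix $(c_{i,k})$ intervenes, verifying it is unit lower triangular on that block (so its determinant is $1$), and tracking how the shift $m$ distributes between the $\kappa$'s on the left and the Toeplitz determinant on the right so that exactly the exponent $n-k$ (and not, say, $n-k+m$ or $n+1-k$) survives. I would first do the case $m=0$ carefully — there $\mu_{i,j}=\langle\Phi_i(z),z^j\rangle/\kappa_i$ and the identity is simply $(\kappa_i\mu_{i,j}) = C\,(\mu_{i-j})$ with $C$ unit lower triangular $(n{+}1)\times(n{+}1)$, giving $\det(\mu_{i,j}) = \bigl(\prod_{i=0}^n\kappa_i\bigr)^{-1}\det(\mu_{i-j}) = \prod_{k=0}^{n-1}(1-|\alpha_k|^2)^{-(n-k)}\det(\mu_{i-j})$ since $\prod_{i=0}^{n}\kappa_i = \prod_{i=0}^{n}\prod_{k=0}^{i-1}(1-|\alpha_k|^2) = \prod_{k=0}^{n-1}(1-|\alpha_k|^2)^{\,n-k}$ — exactly the claimed constant. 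Then the general $m$ reduces to $m=0$ by the same computation with $\Phi$'s shifted, for $m\ge0$, and by \Cref{lem:conjugate_mu} for $m<0$.
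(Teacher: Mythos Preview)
Your meandering all stems from one misreading: you take the subscript convention literally from the display in the statement and interpret $\mu_{m+i,j}$ as $\langle \Phi_{m+i}(z),z^{j}\rangle/\kappa_{m+i}$. In the paper the two-index moment is $\mu_{n,m}=\mu_{n,0,m}=\langle \Phi_m(z),z^{n}\rangle/\kappa_m$ (Definition~3.1), so in $\mu_{m+i,j}$ the \emph{first} index $m+i$ is the exponent on $z$ and the \emph{second} index $j$ is the $\Phi$-index. (The display ``$\mu_{i,j}=\langle\Phi_i(z),z^j\rangle/\langle\Phi_i(z),\Phi_i(z)\rangle$'' in the statement is a typo; the paper's own proof of this proposition uses the Definition~3.1 convention.) Under your reading the proposition is actually false for $m\neq 0$: already for $n=0$, $m=1$ one gets $\mu_1=\alpha_0$ on the left but $\langle\Phi_1,1\rangle/\kappa_1=0$ on the right. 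So your Steps~2--3 for general $m$ could never have been made to work, and the appeals to ``rectangular truncation'' and to \Cref{lem:conjugate_mu} for $m<0$ are chasing a false target.

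Once you use the correct convention, your own change-of-basis idea goes through in a single stroke for every integer $m$, with no case split. Writing $\Phi_j(z)=\sum_{k=0}^{j}c_{j,k}z^k$ and $C=(c_{j,k})_{0\le j,k\le n}$ (unit lower triangular), one has
\[
\kappa_j\,\mu_{m+i,j}=\langle\Phi_j(z),z^{m+i}\rangle=\sum_{k=0}^{j}c_{j,k}\,\mu_{m+i-k},
\]
which is the matrix identity $(\mu_{m+i,j})_{i,j}\,\mathrm{diag}(\kappa_0,\dots,\kappa_n)=(\mu_{m+i-k})_{i,k}\,C^{T}$. Taking determinants and using $\det C^{T}=1$, $\prod_{j=0}^{n}\kappa_j=\prod_{k=0}^{n-1}(1-|\alpha_k|^2)^{\,n-k}$ gives the result. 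This is essentially the same content as the paper's proof; the paper just packages the column operation $\Phi_j\mapsto z^j$ via multilinearity, introducing separate variables $z_i$ and functionals $\cL_{z_i}$ so that the step $\det(z_i^{-(m+i)}\Phi_j(z_i))=\det(z_i^{-(m+i-j)})$ replaces your factor $C^{T}$. Neither approach needs a separate treatment of $m<0$.
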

\begin{proof}
  Let \( z_i \) be the variables and \( V_{z_i} \) be the vector space
  of Laurent polynomials in variable \( z_i. \) Set \( \cL_{z_i} \) to
  be the linear functional only acts on \( V_{z_i} \) that satisfies
  the orthogonality condition. Then
  \( \langle \Phi_j,\Phi_j \rangle_{\cL_{z_i}} =
  \prod_{k=0}^{j-1}(1-|\alpha_k|^2) \) for all \( i \) and \( j \).
  When we take \( z_i \) as the variable considered in
  \( \mu_{m+i,j} \), namely,
  \[
    \mu_{m+i,j}=\cL_{z_i}(z_i^{-(m+i)}\Phi_j(z_i))/ \langle \Phi_j,\Phi_j \rangle_{\cL_{z_i}},
  \]
  the following computation gives the result:
  \begin{align*}
    \det\left( \mu_{m+i,j} \right)_{0\le i,j\le n}
    &= \det\left( \cL_{z_i}(z_i^{-(m+i)}\Phi_j(z_i))/ \langle \Phi_j,\Phi_j \rangle_{\cL_{z_i}}  \right)_{0\le i,j\le n}\\
    &= \prod_{k=0}^{n-1}(1-|\alpha_k|^2)^{-(n-k)}\cL_{z_n}\circ\dots\circ\cL_{z_0}\left( \det\left( z_i^{-(m+i)}\Phi_j(z_i) \right)_{0\le i,j\le n}\right)  \\ 
    &=\prod_{k=0}^{n-1}(1-|\alpha_k|^2)^{-(n-k)}\cL_{z_n}\circ\dots\circ\cL_{z_0}\left( \det\left( z_i^{-(m+i-j)} \right)_{0\le i,j\le n}\right)  \\
    &= \prod_{k=0}^{n-1}(1-|\alpha_k|^2)^{-(n-k)}\det\left( \cL_{z_i}(z_i^{-(m+i-j)}) \right)_{0\le i,j\le n} \\
    &= \prod_{k=0}^{n-1}(1-|\alpha_k|^2)^{-(n-k)}\det\left( \mu_{m+i-j} \right)_{0\le i,j\le n}.\qedhere 
  \end{align*}
\end{proof}

\begin{cor}\label{cor:det}
  For a nonnegative integer \( n \), the determinant of the Toeplitz
  matrix \( \left( \mu_{i-j} \right)_{0\le i,j\le n} \) is given by
  \[
    \det\left( \mu_{i-j} \right)_{0\le i,j\le n}  = \prod_{k=0}^{n-1}(1-|\alpha_k|^2)^{n-k}.
  \]
\end{cor}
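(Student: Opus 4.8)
The plan is to read this off directly from \Cref{prop: det=det}. Taking $m = 0$ there gives
\[
  \det\left( \mu_{i-j} \right)_{0\le i,j\le n} = \prod_{k=0}^{n-1}(1-|\alpha_k|^2)^{n-k}\cdot \det\left( \mu_{i,j} \right)_{0\le i,j\le n},
\]
so everything reduces to the claim that $\det\left( \mu_{i,j} \right)_{0\le i,j\le n} = 1$, where $\mu_{i,j} = \langle \Phi_i(z),z^j \rangle/\langle \Phi_i(z),\Phi_i(z) \rangle$. In other words, the corollary is essentially a one-line consequence of the previous proposition, with one small linear-algebra verification to carry out.

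To prove that claim I would unwind the bracket. Since the reverse--conjugate of $z^j$ is $z^{-j}$, i.e.\ $\overline{z^j}(1/z) = z^{-j}$, we have $\langle \Phi_i(z),z^j \rangle = \cL(\Phi_i(z)z^{-j})$, while $\langle \Phi_i(z),\Phi_i(z) \rangle = \cL(\Phi_i(z)\overline{\Phi_i}(1/z)) = \kappa_i$ by the orthogonality condition \eqref{eq:ortho_cond}. Now apply the equivalent form \eqref{eq:equiv_cond2} of orthogonality established in \Cref{lem:equiv_ortho_cond}: for $0 \le j \le i$ one has $\cL(\Phi_i(z)z^{-j}) = \kappa_j\delta_{i,j}$. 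Hence $\mu_{i,j} = 0$ whenever $j < i$, and $\mu_{i,i} = \kappa_i/\kappa_i = 1$. Therefore $\left( \mu_{i,j} \right)_{0\le i,j\le n}$ is an upper triangular matrix all of whose diagonal entries equal $1$, so its determinant is $1$, which is exactly the missing claim. (The degenerate case $n = 0$ is the trivial identity $\det(\mu_0) = \cL(1) = 1$, matching the empty-product convention.)

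I do not anticipate any real obstacle here: the substantive work has already been done in \Cref{prop: det=det} via the several-variables anti-symmetrization argument with independent copies $\cL_{z_i}$ of the linear functional. The only thing needing attention is the triangularity bookkeeping, and that is forced by the observation that \eqref{eq:equiv_cond2} constrains $\cL(\Phi_i(z)z^{-j})$ precisely in the range $j \le i$ --- exactly the entries on and below the diagonal of $(\mu_{i,j})$. Finally, since $\kappa_k > 0$ forces $|\alpha_k| < 1$, the right-hand side is a product of positive numbers, so the same computation applied to every leading principal submatrix of $(\mu_{i-j})_{0\le i,j\le n}$ yields the positive definiteness promised in the introduction to this section.
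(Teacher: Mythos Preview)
Your proof is correct and follows essentially the same approach as the paper: set $m=0$ in \Cref{prop: det=det} and use \Cref{lem:equiv_ortho_cond} to see that $(\mu_{i,j})$ is triangular with $1$'s on the diagonal, hence has determinant $1$. The only cosmetic discrepancy is that the paper calls the matrix lower triangular (using the convention $\mu_{i,j}=\langle\Phi_j,z^i\rangle/\langle\Phi_j,\Phi_j\rangle$ from the proof of \Cref{prop: det=det}) while you call it upper triangular (using the convention $\mu_{i,j}=\langle\Phi_i,z^j\rangle/\langle\Phi_i,\Phi_i\rangle$ stated there); this is a harmless labeling issue and does not affect the argument.
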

\begin{proof}
  By~\Cref{lem:equiv_ortho_cond}, the matrix
  \( \left( \mu_{i,j} \right)_{0 \leq i,j \leq n} \) is a lower
  triangular matrix with diagonal entries \( \mu_{i,i} = 1 \) for all
  \( i \), so we have
  \[
    \det\left( \mu_{i,j} \right)_{0\le i,j\le n} = 1.
  \]
  Plugging \( m = 0 \) into \Cref{prop: det=det} completes the proof.
\end{proof}

\begin{remark}
  From~\eqref{eq:ortho_cond}, the map~\eqref{eq:inner_product} is positive-definite and hence defines an inner product.  
More generally, in \eqref{eq:ortho_cond} one may allow \( \kappa_n \neq 0 \) instead of \( \kappa_n > 0 \).  
When \( \kappa_n > 0 \), the linear functional corresponds to a nontrivial probability measure.  
Moreover, while \( \kappa_n > 0 \) implies \( |\alpha_n| < 1 \), the weaker condition \( \kappa_n \neq 0 \) only implies \( |\alpha_n| \neq 1 \).  
In this case, the map \( \langle \cdot,\cdot \rangle \) is no longer positive-definite and thus not an inner product, but this does not affect the combinatorial results established later.  
A detailed discussion of this case can be found in~\cite{CMMV2016,Geronimus1954}.
\end{remark}

\section{Combinatorial interpretations for the generalized moments}\label{sec:Moments and path}

In this section, we provide three combinatorial interpretations for
the generalized moment \( \mu_{n,r,s} \) defined below. Using these
combinatorial interpretations, we extend the second item of
Verblunsky's formula \cite[(ii) in Theorem~8.2.2]{Ismail2009} to
results for generalized moments as follows. This extension follows
directly from the results of \Cref{thm:Luka} (or \Cref{thm:gentle
  Motzkin}) and \Cref{thm:recipociry}.

\begin{defn}\label{def:mu_nrs}
  For an integer \( n \), and nonnegative integers \( r \) and
  \( s \), we define a \emph{generalized moment} \( \mu_{n,r,s} \) for
  OPUC by
\begin{align*}
  \mu_{n,r,s} = \frac{\langle \Phi_s,z^n \Phi_r \rangle}{\langle \Phi_s,\Phi_s \rangle}.
\end{align*}
\end{defn}
Note that
\( \mu_n = \mathcal{L}(z^{-n}) = \langle 1,z^n \rangle =
\mu_{n,0,0}\), \( \mu_{n,m} = \mu_{n,0,m} \) and
\( \langle \Phi_m,\Phi_m \rangle = \prod_{j=0}^{m-1}(1-|\alpha_j|^2)
\).

\begin{thm}[A generalized version of Verblunsky's formula]\label{thm:generalized Verblunsky}
  For nonnegative integers \( n \), \( r \) and \( s \), the
  generalized moment \( \mu_{n,r,s} \) is a polynomial in
  \( \{\alpha_j: 0\leq j < n+r \} \) and
  \( \{ \overline{\alpha_j} : 0 \leq j < n+r \} \) with integer
  coefficients. In particular, if we write
  \( \beta_j = -\overline{\alpha_j} \) for all \( j \), then
  \( \mu_{n,r,s} \) has all positive coefficients in
  \( \ZZ[\alpha_0,\alpha_1,\dots,\beta_0,\beta_1,\dots] \). Moreover,
  we have the reciprocity
  \[
    \mu_{-n,r,s} = \overline{\mu_{n,s,r}}\cdot \frac{\prod_{j=0}^{s-1}(1-|\alpha_j|^2)}{\prod_{j=0}^{r-1}(1-|\alpha_j|^2)},
  \]
  which implies \( \mu_{-n} = \overline{\mu_n} \).
\end{thm}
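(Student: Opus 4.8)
The plan is to split the statement into its three assertions — polynomiality with integer coefficients, positivity after the substitution $\beta_j=-\overline{\alpha_j}$, and the reciprocity formula — and to obtain the first two directly from a combinatorial interpretation while proving the third by a ``path reversal'' argument. Concretely, I would first invoke \Cref{thm:Luka} (or \Cref{thm:gentle Motzkin}), which expresses $\mu_{n,r,s}=\sum_{p\in\mathfrak L_{n,r,s}}\wt_L(p)$, and then examine the weight function $\wt_L$. The key point to check is that every step weight is a monomial in the $\alpha_j$ and $\overline{\alpha_j}$ with $j$ bounded by the horizontal coordinate of the step, hence by $n+r$; since the path set $\mathfrak L_{n,r,s}$ is finite, the sum is a polynomial in $\{\alpha_j,\overline{\alpha_j}:0\le j<n+r\}$ with integer (in fact, after inspection, non-negative integer) coefficients. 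The factors $(1-|\alpha_j|^2)=1-\alpha_j\overline{\alpha_j}$ that appear in the weights are themselves polynomials in these variables, so the normalization by $\langle\Phi_s,\Phi_s\rangle=\prod_{j=0}^{s-1}(1-|\alpha_j|^2)$ must be shown to cancel cleanly against a matching product in the numerator — this is where I would lean on the precise form of $\wt_L$ established in \Cref{sec:Moments and path}.

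For the positivity claim, after substituting $\beta_j=-\overline{\alpha_j}$ I would rewrite each step weight in terms of the $\alpha_j$'s and $\beta_j$'s and verify that all the minus signs in the weights (for instance the $-\overline{\alpha_n}$ coming from \eqref{eq:rec1}) are absorbed into the $\beta_j$'s, while the factors $1-|\alpha_j|^2=1+\alpha_j\beta_j$ become manifestly positive-coefficient polynomials. Thus each $\wt_L(p)$ becomes a monomial with coefficient $+1$ in $\ZZ[\alpha_0,\alpha_1,\dots,\beta_0,\beta_1,\dots]$, and summing over the finite set $\mathfrak L_{n,r,s}$ preserves this. I would double-check the low-degree cases against \eqref{eq:mu3}: with $\beta_j=-\overline{\alpha_j}$ the term $-\alpha_1^2\overline{\alpha_0}(1-|\alpha_0|^2)$ becomes $\alpha_1^2\beta_0(1+\alpha_0\beta_0)$, which indeed has positive coefficients, a reassuring consistency test.

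For the reciprocity $\mu_{-n,r,s}=\overline{\mu_{n,s,r}}\cdot\prod_{j=0}^{s-1}(1-|\alpha_j|^2)\big/\prod_{j=0}^{r-1}(1-|\alpha_j|^2)$, the natural route is algebraic rather than combinatorial: unwind the definition $\mu_{-n,r,s}=\langle\Phi_s(z),z^{-n}\Phi_r(z)\rangle/\langle\Phi_s(z),\Phi_s(z)\rangle = \cL\big(\Phi_s(z)\,z^{n}\,\overline{\Phi_r}(1/z)\big)/\langle\Phi_s,\Phi_s\rangle$, apply the conjugation identity $\cL(f(z))=\overline{\cL(\overline f(1/z))}$ from the proof of \Cref{lem:conjugate_mu} (valid for all $f\in V$), and observe that $\overline{\Phi_s(z)z^n\overline{\Phi_r}(1/z)}$ evaluated at $1/z$ equals $\overline{\Phi_s}(1/z)\,z^{-n}\,\Phi_r(z)$. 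This turns $\cL(\cdots)$ into $\overline{\cL(\Phi_r(z)z^{-n}\overline{\Phi_s}(1/z))}=\overline{\langle\Phi_r(z),z^n\Phi_s(z)\rangle}$, and dividing by $\langle\Phi_s,\Phi_s\rangle$ and multiplying/dividing by the appropriate $\prod(1-|\alpha_j|^2)$ factors (using $\langle\Phi_m,\Phi_m\rangle=\prod_{j=0}^{m-1}(1-|\alpha_j|^2)$) produces exactly $\overline{\mu_{n,s,r}}$ times the stated ratio. Setting $r=s=0$ and $n\mapsto n$ immediately yields $\mu_{-n}=\overline{\mu_n}$. The main obstacle I anticipate is purely bookkeeping: making sure the $1/z$-substitution and complex conjugation are applied consistently to Laurent polynomials (the reverse-polynomial formalism of \Cref{sec:intro} is what keeps this honest), and confirming that the normalization factors in $\mu_{n,r,s}$ and $\mu_{n,s,r}$ differ by precisely the advertised quotient and no stray constant.
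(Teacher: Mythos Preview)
Your approach is correct and matches the paper's: polynomiality and positivity come directly from \Cref{thm:Luka}, and reciprocity from the conjugate-symmetry computation you outline (the paper carries out exactly this computation in the paragraph preceding \Cref{thm:recipociry}, and then gives a second, combinatorial proof in \Cref{thm:recipociry} itself).

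Three small corrections to tighten the write-up. First, the index $j$ appearing in a step weight is governed by the \emph{height} $b$ of the step, not the horizontal coordinate; the bound $j<n+r$ then follows because a \L{}ukasiewicz path starting at height $r$ can gain at most one unit of height per step, so at horizontal position $a$ the height satisfies $b\le r+a\le r+n-1$. Second, there is no normalization to cancel: \Cref{thm:Luka} already expresses the normalized quantity $\mu_{n,r,s}$ (not $\langle\Phi_s,z^n\Phi_r\rangle$) as the path sum, so the denominator $\langle\Phi_s,\Phi_s\rangle$ has already been absorbed into the recursion. Third, after substituting $\beta_j=-\overline{\alpha_j}$ each step weight becomes $\alpha_b\beta_{b-k-1}\prod_{j=b-k}^{b-1}(1+\alpha_j\beta_j)$, which is a \emph{polynomial} with positive integer coefficients, not a single monomial; summing these over paths still gives the positivity claim.
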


The generalized moment \( \mu_{n,r,s} \) is related to the
linearization coefficient. Here, the linearization coefficients refer
to the coefficients when a product of polynomials is expressed in
terms of the basis \( \{\Phi_k(z)\}_{k \geq 0} \). Denote by
\( a_{n,r,k} \) the coefficient of \( \Phi_k(z) \) when the polynomial
\( z^n \Phi_r(z) \) is expanded in terms of
\( \{\Phi_k(z)\}_{k \geq 0} \), that is,
\( z^n \Phi_r(z) = \sum_{k \geq 0} a_{n,r,k} \Phi_k(z) \). By
orthogonality, we have
\( \langle \Phi_s, z^n \Phi_r \rangle = \overline{a_{n,r,s}} \cdot
\langle \Phi_s, \Phi_s \rangle \). In other words,
\( \mu_{n,r,s} = \overline{a_{n,r,s}} \). Various types of
linearization coefficients can be found in \Cref{sec:generalized_lc}.

We first review the notations for lattice paths. A \emph{lattice path}
is a finite sequence \( p = (p_0, p_1,\dots,p_n) \) of points in
\( \ZZ \times \ZZ_{\geq 0} \), that is, \( p \) lies weakly above the
\( x \)-axis. Each \emph{step} in the lattice path is defined by the
difference \( (x_i - x_{i-1}, y_i - y_{i-1}) \) between consecutive
points \( p_i = (x_i,y_i) \) and \( p_{i-1} = (x_{i-1},y_{i-1}) \), or
simply denoted as \( p_{i-1} \to p_i \). For convenience, we call the
step \( (1,1) \) an \emph{up-step}, \( (1,0) \) a \emph{horizontal
  step}, \( (0,-1) \) a \emph{vertical down-step}, and \( (1,-\ell) \)
an \emph{\( \ell \)-down-step}, respectively. We simply call the
\( 1 \)-down-step a \emph{down-step}.

In many cases, we consider lattice paths with a specified set of
steps. A \emph{lattice path with the step set \( S \)} is a lattice
path where every step is an element of \( S \). A step set consists of
all possible steps in the lattice path. For example, a \emph{Motzkin
  path} is a lattice path whose steps are an up-step \( (1,1) \), a
horizontal step \( (1,0) \), and a down-step \( (1,-1) \). In other
words, a Motzkin path is a lattice path with the step set
\( \{ (1,1), (1,0), (1,-1)\} \). A \emph{Motzkin--Schr\"oder path}
defined in \cite{KS2023} is a lattice path with the step set
\( \{ (1,1), (1,0), (1,-1), (0,-1) \} \).

We define the \emph{weight function} \( \wt(p) \) to be the product of
weights of steps in the path \( p = (p_0,p_1,\dots, p_n) \), that is,
\( \wt(p) = \prod_{i=1}^n \wt(p_{i-1}\to p_i) \), where
\( \wt(p_{i-1}\to p_i) \) denotes the weight of the step
\( p_{i-1}\to p_i \).

\subsection{First combinatorial model}
\label{sec:general-case}

\begin{defn}\label{def:Luka}
  A \emph{\L{}ukasiewicz path} is a lattice path with the step set
  \( \{ (1,k) \in \ZZ^2 : k \leq 1 \} \). We denote by
  \( \Luka_{n,r,s} \) the set of \L{}ukasiewicz paths from \( (0,r) \)
  to \( (n,s) \). For simplicity, we also define
  \begin{align*}
    \Luka_n := \Luka_{n,0,0}, \qand    \Luka_{n,m} := \Luka_{n,0,m}.
  \end{align*}
  See \Cref{fig:Luka} for an example of a \L{}ukasiewicz path. The
  weight function \( \wtL(p) \) is defined by the weight of a
  \L{}ukasiewicz path \( p \), where the weight of each step is given
  by
  \begin{align*}
    \wtL((a,b)\to (a+1,b+1)) &= 1, \\
    \wtL((a,b) \to (a+1,b-k)) &= -\alpha_b \overline{\alpha_{b-k-1}}\cdot\prod_{j=b-k}^{b-1}(1-|\alpha_j|^2),\quad \mbox{ for \( k = 0,1, \dots ,b  \)}.
  \end{align*}
  Here, we set \( \prod_{j=b-k}^{b-1}(1-|\alpha_j|^2)=1 \) if
  \( k=0 \).
\end{defn}

\begin{figure}
  \centering
  \begin{tikzpicture}[scale = 0.8]
    \draw[help lines] (-1,0) grid (14,7);
    \draw[line width = 0.8pt] (0,0)--(0,7);
    \draw[line width = 0.8pt] (-1,0)--(14,0);
    \filldraw[black] (0,4) circle (3pt) node[below left]{\( (0,r) \)};
    \filldraw[black] (13,2) circle (3pt) node[below]{\( (n,s) \)};
    \draw[line width = 1.5pt] (0,4) -- ++(1,1) -- ++(1,-2) -- ++(1,1) -- ++(1,1) -- ++(1,0) -- ++(1,1) -- ++(1,-4) -- ++(1,0) -- ++(1,1) -- ++(1,1) -- ++(1,1) -- ++(1,-2) -- ++(1,-1);
  \end{tikzpicture}
  \caption{A \L{}ukasiewicz path from \( (0,r) \) to \( (n,s) \) where \( n=13 \), \( r=4 \), and \( s=2 \).}
  \label{fig:Luka}
\end{figure}

\begin{thm}\label{thm:Luka}
  For nonnegative integers \( n,r \), and \( s \), we have
  \[
    \mu_{n,r,s} = \sum_{p \in\Luka_{n,r,s}}\wtL(p).
  \]
\end{thm}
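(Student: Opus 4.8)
The plan is to set up a transfer-matrix (recursion) argument that matches the recursive definition of $\mu_{n,r,s}$ against the step-by-step construction of a \L{}ukasiewicz path. The key observation is that the generalized moment satisfies a recursion in $n$ obtained by peeling off one factor of $z$ from $z^n\Phi_r(z)$: writing $\langle \Phi_s(z),z^n\Phi_r(z)\rangle = \langle \Phi_s(z), z\cdot z^{n-1}\Phi_r(z)\rangle$ and expanding $z\Phi_r(z)$ via Szeg\H{o}'s recurrence relation~\eqref{eq:rec1} as $\Phi_{r+1}(z) + \overline{\alpha_r}\Phi_r^*(z)$, one first needs to express $z^n\Phi_r(z)$ in the basis $\{\Phi_k(z)\}_{k\ge 0}$ and track how the index $r$ (the current height of the path) evolves. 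Concretely, I would prove that the coefficients $a_{n,r,k}$ defined by $z^n\Phi_r(z) = \sum_k a_{n,r,k}\Phi_k(z)$ satisfy a recursion of the form $a_{n,r,k} = a_{n-1,r+1,k} + (\text{linear combination over } k' \le k \text{ coming from } \Phi_r^*)$, where the $\Phi_r^*$ contribution, re-expanded in the $\Phi$-basis, produces exactly the down-steps; then take conjugates using $\mu_{n,r,s} = \overline{a_{n,r,s}}$.

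The main technical ingredient is to compute the expansion of the reverse polynomial $\Phi_r^*(z)$ in terms of $\{\Phi_k(z)\}_{0\le k\le r}$, since this is what governs an $\ell$-down-step from height $r$ down to height $r-\ell$. Iterating Szeg\H{o}'s recurrence~\eqref{eq:rec2} in the form $\Phi_r^*(z) = \Phi_{r-1}^*(z) - \alpha_{r-1} z\Phi_{r-1}(z)$ together with~\eqref{eq:rec1}, one obtains an expansion of $\Phi_r^*$ with explicit coefficients involving products $\prod_j (1-|\alpha_j|^2)$ and single Verblunsky factors $\alpha_j$. I would prove by induction on $r$ the identity
\[
  \Phi_r^*(z) = \sum_{k=0}^{r} c_{r,k}\,\Phi_k(z), \qquad c_{r,r} = \prod_{j=0}^{r-1}(1-|\alpha_j|^2)^{0}\cdot(\text{const}),
\]
with the lower coefficients $c_{r,k}$ ($k<r$) being, up to sign and conjugation, exactly the weights appearing in the down-steps of \Cref{def:Luka}. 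Matching $\overline{c_{r,k}}$ (after incorporating the normalization $\langle\Phi_s,\Phi_s\rangle^{-1}$ and the $\overline{\alpha_r}$ prefactor and the extra $\alpha_b$ from the $z$ acting on $\Phi_b$) against
\[
  \wtL((a,b)\to(a+1,b-k)) = -\alpha_b\overline{\alpha_{b-k-1}}\prod_{j=b-k}^{b-1}(1-|\alpha_j|^2)
\]
is the heart of the proof, and the sign bookkeeping (three sources of signs: the $-\overline{\alpha_n}$ in~\eqref{eq:rec1}, the $-\alpha_n$ in~\eqref{eq:rec2}, and the conjugation $\mu_{n,r,s}=\overline{a_{n,r,s}}$) together with the exact range of the product $\prod_{j=b-k}^{b-1}$ is where I expect the bulk of the care to be required.

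Once the one-step recursion $\mu_{n,r,s} = \mu_{n-1,r+1,s} + \sum_{k\ge 0}\wtL((0,r)\to(1,r-k))\,\mu_{n-1,r-k,s}$ is established (valid for $n\ge 1$, with the up-step term $\mu_{n-1,r+1,s}$ carrying weight $1$ as in \Cref{def:Luka}), the theorem follows by a routine induction on $n$: the base case $n=0$ is $\mu_{0,r,s} = \langle\Phi_s,\Phi_r\rangle/\langle\Phi_s,\Phi_s\rangle = \delta_{r,s}$ by orthogonality, which matches $\sum_{p\in\Luka_{0,r,s}}\wtL(p) = \delta_{r,s}$ since the only length-$0$ path from $(0,r)$ to $(0,s)$ exists iff $r=s$ and has weight $1$. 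For the inductive step, a path $p\in\Luka_{n,r,s}$ decomposes uniquely according to its first step $(0,r)\to(1,r')$ with $r'\le r+1$, and $\wtL(p)$ factors as $\wtL(\text{first step})\cdot\wtL(p')$ where $p'\in\Luka_{n-1,r',s}$; summing over the first step and applying the recursion plus the induction hypothesis closes the argument. The only genuine obstacle is the explicit computation of the $\Phi_r^*$-expansion coefficients and the verification that their conjugates match the stated weights exactly; everything after that is bookkeeping.
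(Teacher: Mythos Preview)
Your approach is correct but takes the dual route to the paper's. You decompose a \L{}ukasiewicz path by its \emph{first} step, peeling off one factor of $z$ on the $\Phi_r$ side: writing $z\Phi_r(z)=\Phi_{r+1}(z)+\overline{\alpha_r}\Phi_r^*(z)$ and then expanding $\Phi_r^*(z)$ downward in the basis $\{\Phi_k\}_{0\le k\le r}$ via the relation $\Phi_m^*(z)=(1-|\alpha_{m-1}|^2)\Phi_{m-1}^*(z)-\alpha_{m-1}\Phi_m(z)$, which yields exactly the coefficients $d_{r,k}=-\alpha_{k-1}\prod_{j=k}^{r-1}(1-|\alpha_j|^2)$ (with the convention $\alpha_{-1}=-1$), so that $\alpha_r\overline{d_{r,k}}=\wtL((0,r)\to(1,k))$. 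The paper instead decomposes by the \emph{last} step, applying Szeg\H{o}'s recurrence to $\Phi_s$ in the first slot and iterating~\eqref{eq:rec2} \emph{upward} to obtain $\mu_{n,r,s}=\mu_{n-1,r,s-1}-\sum_{k=s}^{n+r-1}\alpha_k\overline{\alpha_{s-1}}\prod_{j=s}^{k-1}(1-|\alpha_j|^2)\,\mu_{n-1,r,k}$. Your expansion terminates automatically at $\Phi_0^*=\Phi_0$, so there is no boundary term; the paper's upward iteration produces the remainder $\langle\Phi_{n+r}^*(z),z^n\Phi_r(z)\rangle$, which it must separately show vanishes, but in exchange it makes the bound $j<n+r$ in \Cref{thm:generalized Verblunsky} visible directly from the recursion. (A small slip: in your first paragraph the phrase ``linear combination over $k'\le k$'' should read $k'\le r$, as you write correctly later.)
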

\begin{proof}
  When \( n = 0 \), we first see that
  \[
    \mu_{0,r,s} = \frac{\langle \Phi_s,\Phi_r \rangle}{\langle \Phi_s,\Phi_s \rangle} = \delta_{r,s}.
  \]
  We now assume that \( n > 0 \).
  By \eqref{eq:rec1}, we have
  \begin{equation}\label{eq:use_rec1}
    \langle \Phi_s,z^n \Phi_r \rangle = \langle z\Phi_{s-1} , z^n\Phi_r \rangle - \overline{\alpha_{s-1}} \langle \Phi_{s-1}^*,z^n\Phi_r \rangle.
  \end{equation}
  Recall \eqref{eq:rec2} that
  \begin{equation}\label{eq:rec_star}
    \Phi_m^*(z) = \alpha_m z \Phi_m(z) + \Phi_{m+1}^*(z).
  \end{equation}
  By plugging \eqref{eq:rec_star} into \eqref{eq:use_rec1} iteratively
  to increase the index of \( \Phi^* \), we obtain
  \[
    \langle \Phi_s,z^n \Phi_r \rangle = \langle z\Phi_{s-1} , z^n\Phi_r \rangle - \overline{\alpha_{s-1}} \left( \sum_{k = s-1}^{n+r-1} \alpha_k \langle z\Phi_k,z^n \Phi_r \rangle + \langle \Phi_{n+r}^*,z^n\Phi_r \rangle  \right) .
  \]
  The last term in the parenthesis vanishes since
  \begin{align*}
    \langle \Phi_{n+r}^*,z^n\Phi_r \rangle 
    &= \langle z^{n+r} \overline{\Phi_{n+r}}(1/z),z^n\Phi_r(z) \rangle \\
    &= \langle z^{-n} \overline{\Phi_r}(1/z) , z^{-n-r}\Phi_{n+r}(z) \rangle \\ 
    &= \langle \Phi_r^*, \Phi_{n+r} \rangle,
  \end{align*}
  and \( \deg \Phi_r^*(z) < \deg \Phi_{n+r}(z) \) for \( n>0 \). We
  can rewrite \( \langle \Phi_s,z^n \Phi_r \rangle \) as
  \[
    \langle \Phi_s,z^n \Phi_r \rangle = \langle \Phi_{s-1} , z^{n-1}\Phi_r \rangle -  \sum_{k = s}^{n+r-1} \alpha_k\overline{\alpha_{s-1}} \langle \Phi_k,z^{n-1} \Phi_r \rangle .
  \]
  Hence, we obtain the recurrence relation for
  \( \mu_{n,r,s} = {\langle \Phi_s,z^n \Phi_r \rangle}/{\langle
    \Phi_s,\Phi_s \rangle} \):
  \[
    \mu_{n,r,s} = \mu_{n-1,r,s-1} - \sum_{k = s}^{n+r-1}\alpha_k \overline{\alpha_{{s-1}}} \prod_{j=s}^{k-1}(1-|\alpha_j|^2) \mu_{n-1,r,k}.  
  \]
  Let \( \hat{\mu}_{n,r,s} \) be the weight sum of \L{}ukasiewicz
  paths from \( (0,r) \) to \( (n,s) \). Then \( \hat{\mu}_{n,r,s} \)
  satisfies the same recurrence relation for \( \mu_{n,r,s} \) with
  the same initial condition
  \( \hat{\mu}_{0,r,s} = \mu_{0,r,s} = \delta_{r,s} \), which
  completes the proof.
\end{proof}

For instance, there are five \L{}ukasiewicz paths from \( (0,0) \) to
\( (3,0) \):
\[
  \begin{tikzpicture}[scale = 0.5]
    \draw[help lines] (0,0) grid (3,2);
    \draw[line width = 1.5pt] (0,0) -- ++(1,0) -- ++(1,0) -- ++(1,0);
  \end{tikzpicture}\qquad
  \begin{tikzpicture}[scale = 0.5]
    \draw[help lines] (0,0) grid (3,2);
    \draw[line width = 1.5pt] (0,0) -- ++(1,0) -- ++(1,1) -- ++(1,-1);
  \end{tikzpicture}\qquad
  \begin{tikzpicture}[scale = 0.5]
    \draw[help lines] (0,0) grid (3,2);
    \draw[line width = 1.5pt] (0,0) -- ++(1,1) -- ++(1,0) -- ++(1,-1);
  \end{tikzpicture}\qquad
  \begin{tikzpicture}[scale = 0.5]
    \draw[help lines] (0,0) grid (3,2);
    \draw[line width = 1.5pt] (0,0) -- ++(1,1) -- ++(1,-1) -- ++(1,0);
  \end{tikzpicture}\qquad
  \begin{tikzpicture}[scale = 0.5]
    \draw[help lines] (0,0) grid (3,2);
    \draw[line width = 1.5pt] (0,0) -- ++(1,1) -- ++(1,1) -- ++(1,-2);
  \end{tikzpicture}.
\]
The sum of their weights is 
\[
  \alpha_0^3 + \alpha_0\alpha_1(1-|\alpha_0|^2) - \alpha_1^2 \overline{\alpha_0}(1-|\alpha_0|^2) +\alpha_0\alpha_1(1-|\alpha_0|^2) + \alpha_2(1-|\alpha_0|^2)(1-|\alpha_1|^2),
\]
which is equal to \( \mu_3 \) computed in \eqref{eq:mu3}.

An expression \( X \) is said to be \emph{\( S \)-positive} if \( X \)
can be written as a polynomial in variables in \( S \) with all
positive coefficients. By~\Cref{thm:Luka}, we also obtain that
\( \mu_{n,r,s} \) is
\( \{\alpha_0,-\overline{\alpha_0}, \alpha_1, -\overline{\alpha_1},
\dots \} \)-positive.

As a corollary of~\Cref{thm:Luka}, we have
\begin{align*}
  \mu_n =  \sum_{p \in \Luka_n} \wtL(p), \qand \mu_{n,m} = \sum_{p \in \Luka_{n,m}} \wtL(p).
\end{align*}

The reciprocity of the generalized moments can be proved using the
conjugate symmetry of the inner product:
\[
  \mu_{-n,r,s}
  = \frac{\langle \Phi_s, z^{-n}\Phi_r \rangle}{\langle \Phi_s, \Phi_s \rangle}
  = \frac{\langle z^n \Phi_s, \Phi_r \rangle}{\langle \Phi_s, \Phi_s \rangle}
  = \frac{\overline{\langle \Phi_r,z^n \Phi_s \rangle}}{\langle \Phi_s, \Phi_s \rangle}
  = \overline{\mu_{n,s,r}} \frac{\langle \Phi_r,\Phi_r \rangle}{\langle \Phi_s,\Phi_s \rangle}.
\]
Using Szeg\H{o}'s recurrence relation \eqref{eq:rec1}, it is also
possible to provide a combinatorial interpretation for
\( \mu_{-n,r,s} \) for nonnegative integers \( n \), \( r \), and
\( s \). Using the lattice path models for \( \mu_{n,r,s} \) and
\( \mu_{-n,r,s} \), we also show the reciprocity
\( \mu_{-n} = \overline{\mu_n} \) in a combinatorial way.

\begin{thm}\label{thm:recipociry}
  For nonnegative integers \( n \), \( r \), and \( s \), we have
  \[
    \mu_{-n,r,s} = \overline{\mu_{n,s,r}}\cdot \frac{\prod_{j=0}^{r-1}(1-|\alpha_j|^2)}{\prod_{j=0}^{s-1}(1-|\alpha_j|^2)}.
  \]
  In particular, \( \mu_{-n} = \overline{\mu_n} \).
\end{thm}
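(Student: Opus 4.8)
The plan is to prove the reciprocity by deriving, in parallel with the recurrence for $\mu_{n,r,s}$ established in the proof of \Cref{thm:Luka}, an analogous recurrence for $\mu_{-n,r,s}$, and then checking that both sides of the claimed identity satisfy the same recurrence with the same initial data. First I would dispose of the base case $n=0$: here $\mu_{0,r,s}=\delta_{r,s}$ was already computed, and since the factor $\prod_{j=0}^{r-1}(1-|\alpha_j|^2)/\prod_{j=0}^{s-1}(1-|\alpha_j|^2)$ equals $1$ precisely when $r=s$ (using $\delta_{r,s}$ to kill the mismatched case), the identity holds at $n=0$. For the inductive step I would expand $\langle \Phi_s(z), z^{-n}\Phi_r(z)\rangle$ using Szeg\H{o}'s recurrence \eqref{eq:rec1} applied to $\Phi_r$ this time — writing $z^{-n}\Phi_r(z) = z^{-n}\bigl(z^{-1}\Phi_{r+1}(z) + z^{-1}\overline{\alpha_r}\Phi_r^*(z)\bigr)$ after solving \eqref{eq:rec1} for $z\Phi_r$ — or, more symmetrically, use the conjugate-symmetry computation already displayed just before \Cref{thm:recipociry}, namely $\mu_{-n,r,s} = \overline{\mu_{n,s,r}}\cdot \langle\Phi_r,\Phi_r\rangle/\langle\Phi_s,\Phi_s\rangle$, together with $\langle \Phi_m(z),\Phi_m(z)\rangle = \prod_{j=0}^{m-1}(1-|\alpha_j|^2)$.

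Indeed, the cleanest route is simply to take that displayed chain of equalities as the proof: it uses only the conjugate symmetry $\langle f,g\rangle = \overline{\langle g,f\rangle}$ of the inner product (established in \Cref{pro:cond3_inner}), the identity $\langle z^{-n}f, g\rangle = \langle f, z^n g\rangle$ which follows directly from linearity of $\cL$ and the definition $\langle f,g\rangle = \cL(f(z)\overline{g}(1/z))$, and the value of $\langle \Phi_m,\Phi_m\rangle$ from \eqref{eq:kappa}. So the main step is to verify each link: $\langle \Phi_s, z^{-n}\Phi_r\rangle = \langle z^n\Phi_s, \Phi_r\rangle = \overline{\langle \Phi_r, z^n\Phi_s\rangle}$, and then dividing by $\langle\Phi_s,\Phi_s\rangle$ and multiplying and dividing by $\langle\Phi_r,\Phi_r\rangle$ yields $\overline{\mu_{n,s,r}}\cdot \langle\Phi_r,\Phi_r\rangle / \langle\Phi_s,\Phi_s\rangle$, which is exactly the asserted formula. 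Note one must be slightly careful that $\langle\Phi_s,\Phi_s\rangle$ is real and positive (hence unaffected by conjugation), which is guaranteed by \Cref{pro:cond3_inner} or directly by \eqref{eq:kappa}.

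For the special case $\mu_{-n} = \overline{\mu_n}$, I would just set $r=s=0$: both products are empty, equal to $1$, and $\mu_{n,0,0}=\mu_n$, so the reciprocity collapses to $\mu_{-n} = \overline{\mu_n}$. I do not anticipate a serious obstacle here — the only thing to watch is the bookkeeping of which of $r,s$ appears in the numerator versus denominator (the statement of \Cref{thm:recipociry} has $\prod_{j<r}/\prod_{j<s}$ while the abstract/intro statement has $\prod_{j<s}/\prod_{j<r}$, so one should double-check consistency with Definition~\ref{def:mu_nrs} by testing a small case such as $n=1$, $r=1$, $s=0$). The paper also advertises a combinatorial proof via the lattice path models for $\mu_{n,r,s}$ and $\mu_{-n,r,s}$; if that is wanted, the plan would instead be to exhibit a weight-preserving (up to the scalar factor) bijection between $\Luka$-type paths for $\mu_{-n,r,s}$ and those for $\mu_{n,s,r}$ obtained by reflecting the path and conjugating all Verblunsky parameters, but the analytic argument above is shorter and fully sufficient.
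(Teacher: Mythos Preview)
Your proposal is correct. The analytic argument via conjugate symmetry of the inner product that you present is in fact exactly the short computation the paper displays in the paragraph immediately \emph{preceding} the theorem statement, so you have reproduced the paper's own quick proof.

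However, the proof the paper actually places \emph{under} \Cref{thm:recipociry} takes a different, combinatorial route. There the authors derive a recurrence for $\mu_{-n,r,s}$ by applying Szeg\H{o}'s recurrence \eqref{eq:rec1} to $\Phi_r$ in $\langle \Phi_s, z^{-n}\Phi_r\rangle$ and then iterating \eqref{eq:rec2} to raise the index of $\Phi^*$, obtaining
\[
  \mu_{-n,r,s} = (1-|\alpha_{r-1}|^2)\,\mu_{-(n-1),r-1,s} + \sum_{k\ge r}(-\overline{\alpha_k}\alpha_{r-1})\,\mu_{-(n-1),k,s}.
\]
This recurrence is interpreted as a weight sum over lattice paths from $(0,s)$ to $(-n,r)$ with step set $\{(-1,1),(-1,0),(-1,-1),\dots\}$ and explicit step weights; comparing with the \L{}ukasiewicz model for $\mu_{n,s,r}$, one sees that the two differ exactly by conjugation of all $\alpha$'s and the scalar $\prod_{j<r}(1-|\alpha_j|^2)/\prod_{j<s}(1-|\alpha_j|^2)$. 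Your analytic argument is shorter and fully rigorous; the paper's combinatorial argument buys something extra, namely an intrinsic lattice-path interpretation for $\mu_{-n,r,s}$ itself, which is what the paper advertises. You correctly anticipated this alternative and also correctly flagged the $r/s$ swap between the introduction and the theorem --- the version in \Cref{thm:recipociry} (numerator indexed by $r$) is the one consistent with the inner-product computation.
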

\begin{proof}
  We first recall that \( \mu_{0,r,s} = \delta_{r,s} \). We claim that
  \begin{equation}\label{eq:rec mu-n}
    \mu_{-n,r,s} = (1-|\alpha_{r-1}|^2)\mu_{-(n-1),r-1,s} + \sum_{k=r}^{n+r-1}(-\overline{\alpha_k}\alpha_{r-1})\mu_{-(n-1),k,s}.
  \end{equation}
  Using \eqref{eq:rec1}, we have
  \[
    \langle \Phi_s, z^{-n}\Phi_r \rangle
    = \langle \Phi_s,z^{-(n-1)}\Phi_{r-1} \rangle - \alpha_{r-1} \langle \Phi_s,z^{-n}\Phi_{r-1}^* \rangle.
  \]
  Similar to the proof of \Cref{thm:Luka}, by \eqref{eq:rec2}, we
  obtain
  \begin{multline}\label{eq:rec mu-n 1}
    \langle \Phi_s, z^{-n}\Phi_r \rangle
    = (1-|\alpha_{r-1}|^2)\langle \Phi_s,z^{-(n-1)}\Phi_{r-1} \rangle \\
    + \sum_{k=r}^{n+s-1} (-\alpha_{r-1}\overline{\alpha_k})\langle \Phi_s,z^{-(n-1)}\Phi_k \rangle 
    - \alpha_{r-1} \langle \Phi_s,z^{-n}\Phi_{n+s}^* \rangle.
  \end{multline}
  The last term vanishes since
  \[
    \langle \Phi_s,z^{-n}\Phi_{r+s}^* \rangle = \langle \Phi_s, z^{-n}z^{n+s}\overline{\Phi_{n+s}}(1/z) \rangle = \langle \Phi_{n+s},z^s \overline{\Phi_s}(1/z) \rangle = \langle \Phi_{n+s},\Phi_s^* \rangle = 0,
  \]
  for \( n > 0 \). Dividing both sides of \eqref{eq:rec mu-n 1} by
  \( \langle \Phi_s,\Phi_s \rangle \) gives \eqref{eq:rec mu-n} as
  desired.

  The recurrence relation~\eqref{eq:rec mu-n} with the initial
  condition \( \mu_{0,r,s} = \delta_{r,s} \) gives us that
  \( \mu_{-n,r,s} \) is equal to the weight sum of lattice paths from
  \( (0,s) \) to \( (-n,r) \) with the step set
  \( \{(-1,1),(-1,0),(-1,-1), \dots \} \), where weights of steps are
  given by
  \begin{align*}
    \wt((a,b)\to (a-1,b+1)) &= 1-|\alpha_b|^2 ,\\
    \wt((a,b)\to (a-1,b-k)) &= -\overline{\alpha_b}\alpha_{b-k-1}, \quad\mbox{for \( k=0,1, \dots ,b \).} 
  \end{align*}
  One can see that \( \mu_{-n,r,s} \) is equivalent to taking the
  conjugate on the weights of \L{}ukasiewicz paths for
  \( \mu_{n,s,r} \) multiplied by the factor
  \(
  \prod_{j=0}^{r-1}(1-|\alpha_j|^2)/\prod_{j=0}^{s-1}(1-|\alpha_j|^2)
  \), which completes the proof. At last, we obtain
  \( \mu_{-n}=\overline{\mu_{n}} \) when \( r=s=0 \).
\end{proof}

\subsection{Second combinatorial model}
\label{sec:second-comb-model}

\begin{defn}\label{def:gMot}
  A \emph{gentle Motzkin path} is a Motzkin path satisfying the
  following conditions:
  \begin{itemize}
  \item an up-step \( (a,b) \to (a+1,b+1) \) occurs only when \( a+b \) is even, and
  \item a down-step \( (a,b) \to (a+1,b-1) \) occurs only when \( a+b \) is odd.
  \end{itemize}
  We denote by \( \gMot((a,b)\to (c,d)) \) the set of gentle Motzkin
  paths from \( (a,b) \) to \( (c,d) \). The weight function
  \( \wtM(p) \) is defined by the weight of a gentle Motzkin path
  \( p \), where the weight of each step is given by
  \begin{align*}
   \wtM((a,b)\to (a+1,b+1)) &= 1, \\
    \wtM((a,b)\to (a+1,b-1)) &= (1-|\alpha_{b-1}|^2), \\
    \wtM((a,b)\to (a+1,b)) &= \begin{cases}
                              \alpha_i & \mbox{if \( a+b \) is even},\\
                              -\overline{\alpha_{i-1}} & \mbox{if \( a+b \) is odd.}
                             \end{cases}
  \end{align*}
\end{defn}

See \Cref{fig:gMot} for an example of a gentle Motzkin path. The gray
lines represent the locations where the gentle Motzkin path can lie
on.

\begin{figure}
  \centering
  \begin{tikzpicture}[scale = 0.45]
    \foreach \y in {0,1,...,7}{\draw[help lines] (-5,\y)--(25,\y);}
    \draw[help lines] (-5,1)--(1,7) (-5,3)--(-1,7) (-5,5)--(-3,7) (20,0)--(25,5) (22,0)--(25,3) (24,0)--(25,1);
    \draw[help lines] (-5,6)--(1,0) (-5,4)--(-1,0) (-5,2)--(-3,0) (20,7)--(25,2) (22,7)--(25,4) (24,7)--(25,6);
    \foreach \x in {-4,-2,...,18}{\draw[help lines] (\x,0)--(\x+7,7);}
    \foreach \x in {-4,-2,...,18}{\draw[help lines] (\x,7)--(\x+7,0);}
    \draw[line width = 0.8pt] (0,0)--(0,7);
    \draw[line width = 0.8pt] (-5,0)--(25,0);
    \foreach \x in {-5,-4,...,25}{
      \foreach \y in {0,1,...,7}{
        \filldraw[gray] (\x,\y) circle (1.6pt);
      }}
    \filldraw[black] (-4,4) circle (3pt) node[below]{\( (-r,r) \)};
    \filldraw[black] (24,2) circle (3pt) node[below]{\( (2n-s,s) \)};
    \draw[line width = 1.5pt] (-4,4) -- ++(1,1) -- ++(1,0) -- ++(1,-1) -- ++(1,-1) -- ++(1,0) -- ++(1,1) -- ++(1,1) -- ++(1,0) -- ++(1,0) -- ++(1,1) -- ++(1,0) -- ++(1,-1) -- ++(1,-1) -- ++(1,-1) -- ++(1,-1) -- ++(1,0) -- ++(1,0) -- ++(1,0) -- ++(1,1) -- ++(1,1) -- ++(1,1) -- ++(1,0) -- ++(1,-1) -- ++(1,-1) -- ++(1,0) -- ++(1,0) -- ++(1,-1) -- ++(1,0);
  \end{tikzpicture}
  \caption{A gentle Motzkin path from \( (-r,r) \) to \( (2n-s,s) \) where \( n=13 \), \( r=4 \), and \( s=2 \).}
  \label{fig:gMot}
\end{figure}

\begin{thm}\label{thm:gentle Motzkin}
  For nonnegative integers \( n,r \), and \( s \), we have
  \[
    \mu_{n,r,s} = \sum_{p \in \gMot((-r,r)\to (2n-s,s))}\wtM(p). 
  \]
\end{thm}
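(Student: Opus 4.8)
The plan is to derive \Cref{thm:gentle Motzkin} from \Cref{thm:Luka} by exhibiting a weight-preserving bijection $\psi\colon\gMot((-r,r)\to(2n-s,s))\to\Luka_{n,r,s}$. The key structural observation is that both prescribed endpoints have even coordinate-sum, $(-r)+r=0$ and $(2n-s)+s=2n$. By the rules in \Cref{def:gMot}, a horizontal step flips the parity of $a+b$ while up- and down-steps preserve it, and an up-step (resp.\ a down-step) is allowed precisely when $a+b$ is even (resp.\ odd). Hence a gentle Motzkin path $p$ with these endpoints consists of alternating maximal runs of up-steps (``even phases'') and maximal runs of down-steps (``odd phases''), separated by horizontal steps, beginning and ending in an even phase. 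Consequently $p$ factors uniquely as $p = C_1C_2\cdots C_t\,U^{a_{t+1}}$, where $C_i=U^{a_i}\,H\,D^{b_i}\,H$ with $a_i,b_i\ge 0$, and $U^{a_{t+1}}$ is a (possibly empty) trailing run of up-steps. The shift by $-r$ at the start is exactly what places the first step at even parity and makes this factorization available.

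First I would define $\psi$ by
\[
  \psi(p)=U^{a_1}\,(1,-b_1)\;U^{a_2}\,(1,-b_2)\cdots U^{a_t}\,(1,-b_t)\;U^{a_{t+1}},
\]
that is, each block $C_i=U^{a_i}HD^{b_i}H$ is collapsed to $a_i$ up-steps followed by a single $(1,-b_i)$-step (which is a horizontal step when $b_i=0$), and the trailing up-run is kept verbatim. Its inverse expands, in a \L{}ukasiewicz path, each non-up step $(1,-b)$ sitting at height $c$ into the length-$(b+2)$ segment $H,D,\dots,D,H$ (with $b$ down-steps, so the two horizontal steps are at heights $c$ and $c-b$), leaving up-steps untouched. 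I would then check that $\psi$ is a bijection onto $\Luka_{n,r,s}$: its image starts at height $r$ and ends at height $s$; since $C_i$ and its image $U^{a_i}(1,-b_i)$ both change the height by $a_i-b_i$, every height visited by $\psi(p)$ already occurs among the heights of $p$, so $\psi(p)$ stays weakly above the $x$-axis and every $(1,-b_i)$-step is legal; and adding the identities ``horizontal span $=2n-s+r$'' and ``net height change $=s-r$'' for $p$ gives $\sum_{i=1}^{t}(a_i+1)+a_{t+1}=n$, which is exactly the number of steps of $\psi(p)$. Conversely, the expansion of a path in $\Luka_{n,r,s}$ satisfies every parity constraint of \Cref{def:gMot} --- again because the $-r$ shift places each up-run at even parity --- and terminates at $(2n-s,s)$.

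Next I would verify that $\psi$ preserves weights. Up-steps contribute $1$ on both sides. Writing $c_i$ for the height after the initial run $U^{a_i}$ of $C_i$ (equivalently, the height of its first horizontal step), the rules of \Cref{def:gMot} give
\[
  \wtM(C_i)=\alpha_{c_i}\cdot\prod_{j=c_i-b_i}^{c_i-1}\bigl(1-|\alpha_j|^2\bigr)\cdot\bigl(-\overline{\alpha_{c_i-b_i-1}}\bigr)
          =-\alpha_{c_i}\,\overline{\alpha_{c_i-b_i-1}}\prod_{j=c_i-b_i}^{c_i-1}\bigl(1-|\alpha_j|^2\bigr),
\]
which is exactly $\wtL\bigl((a,c_i)\to(a+1,c_i-b_i)\bigr)$ from \Cref{def:Luka} (the case $b_i=0$ being the horizontal one). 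Hence $\wtM(p)=\wtL(\psi(p))$ for every $p$, and summing over $p\in\gMot((-r,r)\to(2n-s,s))$ and invoking \Cref{thm:Luka} yields $\mu_{n,r,s}=\sum_p\wtM(p)$.

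I expect the only genuinely delicate part to be the first paragraph: proving that the block factorization is forced and unique, and handling correctly the degenerate configurations --- empty up- or down-runs, the case $b_i=0$ (when the $(1,-b_i)$-step degenerates to a horizontal step), a nonempty trailing up-run, and the trivial case $n=0$ (where $\mu_{0,r,s}=\delta_{r,s}$ and both path sets are empty unless $r=s$). Once the block structure and the $-r$/$2n$ parity bookkeeping are in place, the remaining height, step-count, and weight checks are routine index manipulations.
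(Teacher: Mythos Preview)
Your proposal is correct and is essentially the same argument as the paper's proof: the paper constructs the weight-preserving bijection $\pi\colon\Luka_{n,r,s}\to\gMot((-r,r)\to(2n-s,s))$ by expanding each non-up step $(1,-\ell)$ into $H\,D^{\ell}\,H$, which is precisely the inverse of your $\psi$. Your block factorization $C_1\cdots C_t\,U^{a_{t+1}}$ of a gentle Motzkin path makes the parity and step-count bookkeeping more explicit than the paper does, but the bijection, the weight check, and the reliance on \Cref{thm:Luka} are identical.
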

\begin{proof}
  We show that there is a bijective map
  \( \pi: \Luka_{n,r,s} \to \gMot((-r,r)\to (2n-s,s)) \) that
  preserves the weights. For a given \( L \in \Luka_{n,r,s} \), we
  construct \( \pi(L) \) as follows: Start at the point \( (-r,r) \).
  Read the steps of \( L \) from left to right. If a step is an
  up-step, then draw an up-step. If a step is an \( \ell \)-down-step
  \( (1,-\ell) \), then draw a horizontal step, and \( \ell \)
  consecutive down-steps, and finally draw a horizontal step. For
  example, the map \( \pi \) sends the \L{}ukasiewicz path in
  \Cref{fig:Luka} to the gentle Motzkin path in \Cref{fig:gMot}.

  Let \( (1,-\ell_1),\dots,(1,-\ell_k) \) be all the steps in \( L \)
  that are not up-steps. Then there are
  \( \ell_1 + \cdots + \ell_k -r + s \) up-steps, so we have
  \( n = \ell_1 + \cdots + \ell_k - r + s + k \). By the construction,
  \( \pi(L) \) is a lattice path starting from \( (-r,r) \) that has
  \( \ell_1 + \cdots + \ell_k -r + s \) up-steps, \( 2k \) horizontal
  steps, and \( \ell_1 + \cdots + \ell_k \) down-steps. Hence the
  length of \( \pi(L) \) is \( 2n+r-s \) and the height of the ending
  point of \( \pi(L) \) is \( s \). This shows that \( \pi(L) \) is a
  lattice path from \( (-r,r) \) to \( (2n-s,s) \).

  The parity of the sum of the \(x\) and \(y\) coordinates at the
  endpoints of an up-step and a down-step is the same; however, this
  does not hold for a horizontal step. From this observation and the
  construction of \( \pi(L) \), one can see that \( \pi(L) \)
  satisfies the condition of a gentle Motzkin path. Moreover, the
  weight of the step \( (a,b)\to (a+1,b-k) \) in \( L \) is
  \( -\alpha_b \overline{\alpha_{b-k-1}}
  \prod_{j=b-k}^{b-1}(1-|\alpha_j|^2) \). Assigning weights
  \( \alpha_b, (1-|\alpha_{b-1}|^2),
  (1-|\alpha_{b-2}|^2),\dots,(1-|\alpha_{b-k}|^2),
  -\overline{\alpha_{b-k-1}} \) to the corresponding steps of
  \( \pi(L) \) in order yields the weight defined in \Cref{def:gMot}.

  Each stage of the construction of \( \pi \) is invertible, making
  the map \( \pi \) a bijection, which completes the proof.
\end{proof}

\begin{remark}\label{rem:GGTandCMV}
  There is a matrix representation for the generalized moments
  \( \mu_{n,r,s} \). Let \( U = (u_{i,j})_{i,j \geq 0} \) be an
  infinite matrix given by
  \[
    U    =
\begin{bmatrix}
\alpha_0 & 1 & 0 & 0 & \cdots\\
  \alpha_1\rho_0 & -\alpha_1 \overline{\alpha_0} & 1 & 0 & \cdots\\
\alpha_2\rho_0\rho_1 & -\alpha_2 \overline{\alpha_0}\rho_1 & -\alpha_2 \overline{\alpha_1} & 1 & \cdots\\
\alpha_3\rho_0\rho_1\rho_2 & -\alpha_3 \overline{\alpha_0}\rho_1\rho_2 & -\alpha_3 \overline{\alpha_1}\rho_2 & -\alpha_3 \overline{\alpha_2} & \cdots\\
\vdots & \vdots & \vdots & \vdots & \ddots
\end{bmatrix}
  \]
  where \( \rho_i = 1-|\alpha_i|^2 \) for all \( i \). Each
  \( (i,j) \)-entry of the matrix \( U \) is the weight of a step
  \( (a,i) \to (a+1,j) \) in a \L{}ukasiewicz path. Thus, the weight
  sum of \L{}ukasiewicz paths from \( (0,r) \) to \( (n,s) \) is equal
  to the \( (r,s) \)-entry of the matrix \( U^n \).

  There is another matrix representation. Let
  \( C = (c_{i,j})_{i,j\geq 0} \) be an infinite matrix given by
  \[
    C = M \cdot L =
    \begin{bmatrix}
1 & ~ & ~ & ~\\
~ & \Theta_1 & ~ & ~\\
~ & ~ & \Theta_3 & ~\\
~ & ~ & ~ & \ddots
    \end{bmatrix}
    \cdot
    \begin{bmatrix}
\Theta_0 & ~ & ~ & ~\\
~ & \Theta_2 & ~ & ~\\
~ & ~ & \Theta_4 & ~\\
~ & ~ & ~ & \ddots
    \end{bmatrix}
  \]
  where \( \Theta_j = \begin{pmatrix}
\alpha_j & 1\\
1-|\alpha_j|^2 & -\overline{\alpha_j}
\end{pmatrix}
\). Each \( (i,j) \)-entry of the matrix \( L \) is the weight of a
step \( (2k,i)\to (2k+1,j) \) and that of \( M \) is the weight of a
step \( (2k+1,i)\to (2k+2,j) \) in a gentle Motzkin path. Therefore,
we obtain that the weight sum of gentle Motzkin paths from
\( (-r,r) \) to \( (2n-s,s) \) is equal to the \( (r,s) \)-entry of
the matrix \( \left(A_{2n-s}A_{2n-s-1}\cdots A_{-r+1}A_{-r}\right) \),
where \( A_{2k-1} = M \) and \( A_{2k} = L \) for \( k \in \ZZ \).

Note that the matrices \( U \) and \( C \) appear in the literature in
slightly different forms. Using Szeg\H{o}'s recurrence relations, one
can check that
  \[
    U \cdot
    \overline
    {
      \begin{bmatrix}
        \Phi_0(z)\\
        \Phi_1(z)\\
        \Phi_2(z)\\
        \Phi_3(z)\\
        \vdots
      \end{bmatrix}
    }
=
z^{-1}\cdot
\overline{\begin{bmatrix}
\Phi_0(z)\\
\Phi_1(z)\\
\Phi_2(z)\\
\Phi_3(z)\\
\vdots
\end{bmatrix}},
\qquad\mbox{and}\qquad
    C \cdot
    \begin{bmatrix}
\xi_0(z)\\
\xi_1(z)\\
\xi_2(z)\\
\xi_3(z)\\
\vdots
\end{bmatrix}
=
z^{-1}\cdot
\begin{bmatrix}
\xi_0(z)\\
\xi_1(z)\\
\xi_2(z)\\
\xi_3(z)\\
\vdots
\end{bmatrix},
  \]
  where \( \xi_{2n}(z) = z^{-n}\Phi_{2n}(z) \) and
  \( \xi_{2n-1}(z) = z^{-n}\Phi_{2n-1}^*(z) \). That is, the matrices
  \( U \) and \( C \) represent the actions of multiplying
  \( z^{-1} \) on the bases \( \{ \overline{\Phi_n(z)} \}_{n\ge0} \)
  and \( \{\xi_n(z) \}_{n\ge0} \), respectively. Note that
  \( \{ \Phi_n(z) \}_{n \geq 0} \) is a basis in the linear space
  \( \CC[z] \) of polynomials in the variable \( z \) with complex
  coefficients, while \( \{ \overline{\Phi_n(z)}\}_{n \geq 0} \) is a
  basis in the linear space \( \CC[z^{-1}] \) in the variable \( z \)
  with complex coefficients. On the other hand,
  \( \{ \xi_n(z)\}_{n \geq 0} \) is a basis in the linear space of
  Laurent polynomials \( \CC[z,z^{-1}] \). The conjugate transpose of
  \emph{GGT matrix} \( \mathcal{G} \) defined in
  \cite[(4.1.6)]{Simon2005a} and \emph{CMV matrix} \( \mathcal{C} \)
  defined in \cite[Theorem~4.2.5]{Simon2005a} are the conjugate
  orthonormal versions of \( U \) and \( C \). In other words, they
  act on orthonormal bases \( \{\varphi_n(z)\}_{n\ge0} \) and
  \( \{\chi_n(z)\}_{n\ge0} \), respectively, where
  \( \chi_{2n}(z) = z^{-n}\varphi^*_{2n}(z) \) and
  \( \chi_{2n-1}(z) = z^{-n+1}\varphi_{2n-1}(z) \). There are various
  applications of GGT and CMV matrices in the theory of OPUC. For more
  details, see \cite[Chapter~4]{Simon2005a}.
\end{remark}

\subsection{Third combinatorial model}
\label{sec:third-comb-model}

In this subsection, we assume that \( 0 < |\alpha_n| < 1 \) for all
\( n \). Combining \eqref{eq:rec1} and~\eqref{eq:rec2}, we obtain the
three-term recurrence relation for OPUC:
\begin{equation}\label{eq:rec3}
  \Phi_{n+1}(z)=\left( z+\frac{\overline{\alpha_n}}{\overline{\alpha_{n-1}}} \right)\Phi_n(z)-\frac{\overline{\alpha_n}}{\overline{\alpha_{n-1}}}(1-|\alpha_{n-1}|^2)
  z\Phi_{n-1}(z).
\end{equation}
We provide another combinatorial interpretation for \( \mu_{n,r,s} \)
using Schr\"oder paths.

\begin{defn}\label{def:Sch}
  A \emph{Schr\"oder path} is a lattice path with the step set
  \( \{ (1,1), (1,0), (0,-1)\} \). We denote by \( \Sch_{n,r,s} \) the
  set of all Schr\"oder paths from \( (0,r) \) to \( (n,s) \) that do
  not start with a vertical down-step \( (0,-1) \). See \Cref{fig:Sch}
  for an example of a Schr\"oder path. The weight function
  \( \wtS(p) \) is defined by the weight of a Schr\"oder path \( p \),
  where the weight of each step is given by
  \begin{align*}
    \wtS((a,b)\to (a+1,b+1)) &= 1, \\
    \wtS((a,b) \to (a+1,b)) &= -\frac{\overline{\alpha_{b-1}}}{\overline{\alpha_{b}}}, \\
    \wtS((a,b) \to (a,b-1)) &= \frac{\overline{\alpha_{b-2}}}{\overline{\alpha_{b-1}}}(1-|\alpha_{b-1}|^2).
  \end{align*}
\end{defn}

\begin{figure}
  \centering
  \begin{tikzpicture}[scale = 0.8]
    \draw[help lines] (-1,0) grid (14,7);
    \draw[line width = 0.8pt] (0,0)--(0,7);
    \draw[line width = 0.8pt] (-1,0)--(14,0);
    \filldraw[black] (0,4) circle (3pt) node[below left]{\( (0,r) \)};
    \filldraw[black] (13,2) circle (3pt) node[below]{\( (n,s) \)};
    \draw[line width = 1.5pt] (0,4) -- ++(1,1) -- ++(1,0) -- ++(0,-1) -- ++(0,-1) -- ++(1,1) -- ++(1,1) -- ++(1,1) -- ++(0,-1) -- ++(1,1) -- ++(1,0) -- ++(0,-1) -- ++(0,-1) -- ++(0,-1) -- ++(0,-1) -- ++(1,0) -- ++(1,1) -- ++(1,1) -- ++(1,1) -- ++(1,1) -- ++(0,-1) -- ++(0,-1) -- ++(0,-1) -- ++(1,0) -- ++(0,-1);
  \end{tikzpicture}
  \caption{A Schr\"oder path from \( (0,r) \) to \( (n,s) \) where \( n=13 \), \( r=4 \), and \( s=2 \).}
  \label{fig:Sch}
\end{figure}

\begin{thm}\label{thm:Sch}
  Assume that \( 0 < |\alpha_n| < 1 \) for all \( n \).
  For nonnegative integers \( n,r \), and \( s \), we have
  \[
    \mu_{n,r,s} = \sum_{p \in \Sch_{n,r,s}}\wtS(p).
  \]
\end{thm}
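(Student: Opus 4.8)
The plan is to derive a recurrence relation for $\mu_{n,r,s}$ directly from the three-term recurrence \eqref{eq:rec3} and then check that the Schr\"oder-path weight sum satisfies the same recurrence with the same initial condition, exactly in the style of the proof of \Cref{thm:Luka}. First I would record the base case: for $n=0$ one has $\mu_{0,r,s} = \langle \Phi_s(z),\Phi_r(z)\rangle/\langle \Phi_s(z),\Phi_s(z)\rangle = \delta_{r,s}$, and correspondingly the only Schr\"oder path from $(0,r)$ to $(0,s)$ (no steps allowed since a vertical down-step at the start is forbidden) exists iff $r=s$, giving weight $1$.

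Next I would extract the recurrence. The natural move is to apply \eqref{eq:rec3} to $\Phi_r(z)$ inside $\langle \Phi_s(z), z^n \Phi_r(z)\rangle$, but \eqref{eq:rec3} expresses $\Phi_{r+1}$ in terms of $\Phi_r$ and $\Phi_{r-1}$, so I should instead rewrite it to express $z\Phi_r(z)$ — equivalently to peel off one factor of $z$ from $z^n\Phi_r(z)$. Solving \eqref{eq:rec3} for the term $z\Phi_{n-1}(z)$ and shifting indices, one gets a relation of the form
\[
  z\,\Phi_r(z) = \Phi_{r+1}(z) - \frac{\overline{\alpha_r}}{\overline{\alpha_{r-1}}}\Phi_r(z) + \frac{\overline{\alpha_r}}{\overline{\alpha_{r-1}}}(1-|\alpha_{r-1}|^2)\, z\,\Phi_{r-1}(z),
\]
which is a genuine identity in $V$ (not a recursion that must be iterated), valid for $r\ge 1$; for $r=0$ one uses $z\Phi_0(z) = \Phi_1(z)+\overline{\alpha_0}\Phi_0(z)$ together with $\alpha_{-1}=-1$, which makes the displayed formula uniform. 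Substituting this into $\mu_{n,r,s}=\langle\Phi_s,z^n\Phi_r\rangle/\langle\Phi_s,\Phi_s\rangle$ and using $z^n\Phi_r = z^{n-1}(z\Phi_r)$ yields
\[
  \mu_{n,r,s} = \mu_{n-1,r+1,s} - \frac{\overline{\alpha_r}}{\overline{\alpha_{r-1}}}\,\mu_{n-1,r,s} + \frac{\overline{\alpha_r}}{\overline{\alpha_{r-1}}}(1-|\alpha_{r-1}|^2)\,\mu_{n-1,r-1,s}.
\]
Reading this recurrence off a lattice path: starting from $(0,r)$ and moving right while decreasing $n$, the first term corresponds to an up-step $(0,r)\to(1,r+1)$ with weight $1$; the second to a horizontal step $(0,r)\to(1,r)$ with weight $-\overline{\alpha_{r-1}}/\overline{\alpha_r}$ — here I would be careful that the reciprocal matches \Cref{def:Sch}, which it does after matching the height label $b$ of the \emph{target} step appropriately; and the third term, which keeps the $x$-coordinate fixed in the recurrence index but lowers the height by one, corresponds to a vertical down-step $(0,r)\to(0,r-1)$ of weight $\tfrac{\overline{\alpha_{r-2}}}{\overline{\alpha_{r-1}}}(1-|\alpha_{r-1}|^2)$, and the reason such a path cannot \emph{begin} with a vertical down-step is exactly that the recurrence step that introduces it has consumed no $x$-progress but requires the previous step to have advanced $n$, i.e.\ the index $n-1$ term is still reached by a legitimate path. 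I would then define $\hat\mu_{n,r,s}=\sum_{p\in\Sch_{n,r,s}}\wtS(p)$, decompose over the first step of $p$ — up-step, horizontal step, or (if $r\ge 1$) vertical down-step, the last being allowed only because $p$ is required not to \emph{start} with a vertical down-step but may have one after the first genuine step — and verify it obeys the same recurrence and base case.

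The main obstacle I anticipate is bookkeeping the height indices in the weights so that the $b$ appearing in \Cref{def:Sch} lines up correctly with the $\alpha_{r},\alpha_{r-1},\alpha_{r-2}$ appearing in the algebraic recurrence, together with the boundary conventions $\alpha_{-1}=-1$ (used to make the $r=0$ case of the $z\Phi_r$ identity uniform) and the hypothesis $\alpha_n\ne 0$ (needed so that the quotients $\overline{\alpha_{r}}/\overline{\alpha_{r-1}}$ are defined). A secondary subtlety is justifying the "no initial vertical down-step" condition: I would argue that in the correspondence, a vertical down-step always arises as a continuation of a path that has already reached some $(x,y)$ with $x\ge 1$, because the third term of the recurrence is multiplied onto $\mu_{n-1,\cdot,\cdot}$ with $n-1<n$, so the very first step out of $(0,r)$ can only be an up-step or a horizontal step; conversely any Schr\"oder path obeying this restriction decomposes uniquely and contributes to the right term of the recurrence. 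Once the recurrence and initial condition match, the theorem follows by induction on $n$. Alternatively, and perhaps more cleanly, I would note that $\Sch_{n,r,s}$ is in weight-preserving bijection with either $\Luka_{n,r,s}$ or $\gMot((-r,r)\to(2n-s,s))$ — contracting each maximal run of a horizontal step followed by vertical down-steps in a Schr\"oder path to a single $\ell$-down-step recovers a \L ukasiewicz path, and tracking the weights through \eqref{eq:rec1}--\eqref{eq:rec2} versus \eqref{eq:rec3} shows the weights agree after the telescoping $\prod(1-|\alpha_j|^2)$ collapses into the ratios $\overline{\alpha_{b-2}}/\overline{\alpha_{b-1}}$ — so that \Cref{thm:Luka} immediately yields the result; I would present whichever of the two proofs is shorter.
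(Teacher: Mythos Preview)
Your primary approach has a genuine gap. Two things go wrong when you expand \(\Phi_r\) via \eqref{eq:rec3} inside \(\langle \Phi_s, z^n\Phi_r\rangle\). First, the inner product is conjugate-linear in its second argument, so the displayed recurrence is not what you get: substituting \(z\Phi_r=\Phi_{r+1}-\tfrac{\overline{\alpha_r}}{\overline{\alpha_{r-1}}}\Phi_r+\tfrac{\overline{\alpha_r}}{\overline{\alpha_{r-1}}}(1-|\alpha_{r-1}|^2)\,z\Phi_{r-1}\) and using \(z^{n-1}\cdot z\Phi_{r-1}=z^n\Phi_{r-1}\) yields
\[
  \mu_{n,r,s}=\mu_{n-1,r+1,s}-\frac{\alpha_r}{\alpha_{r-1}}\,\mu_{n-1,r,s}+\frac{\alpha_r}{\alpha_{r-1}}(1-|\alpha_{r-1}|^2)\,\mu_{n,r-1,s},
\]
with \(\alpha\)'s rather than \(\overline{\alpha}\)'s and with \(\mu_{n,r-1,s}\) (not \(\mu_{n-1,r-1,s}\)) in the last term. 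These coefficients do \emph{not} match \(\wtS\), which is built from \(\overline{\alpha}\)'s. Second, even setting the conjugation issue aside, the first-step decomposition of \(\Sch_{n,r,s}\) does not close in the family \(\Sch\): after an up- or horizontal step out of \((0,r)\), the remaining path from \((1,r')\) is an arbitrary Schr\"oder path and in particular may begin with a vertical down-step, so it lies in the larger set of all Schr\"oder paths, not in \(\Sch_{n-1,r',s}\). Your heuristic for the ``no initial vertical down-step'' condition rests on the mistaken \(\mu_{n-1,r-1,s}\) index and collapses once that is corrected to \(\mu_{n,r-1,s}\).

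The paper avoids both problems by applying \eqref{eq:rec3} to \(\Phi_{s+1}\) in the \emph{first} (linear) slot. Dividing by \(\langle\Phi_{s+1},\Phi_{s+1}\rangle=(1-|\alpha_s|^2)\langle\Phi_s,\Phi_s\rangle\) gives
\[
  \mu_{n,r,s}=\mu_{n-1,r,s-1}-\frac{\overline{\alpha_{s-1}}}{\overline{\alpha_s}}\,\mu_{n-1,r,s}+\frac{\overline{\alpha_{s-1}}}{\overline{\alpha_s}}(1-|\alpha_s|^2)\,\mu_{n,r,s+1},
\]
whose coefficients are precisely the step weights in \Cref{def:Sch}. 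This is the \emph{last}-step decomposition of \(\Sch_{n,r,s}\), and removing the last step never touches the first, so the ``no initial vertical down-step'' constraint is preserved automatically and the recursion closes. Your alternative route via \L{}ukasiewicz paths is valid and is essentially \Cref{prop:Luka=Sch}; note only that the correspondence is a weight-preserving \emph{grouping} rather than a bijection --- one first merges each \(UV\) pair into a horizontal step with modified weight, and only then contracts each \(HV\cdots V\) run into a single \(\ell\)-down-step.
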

\begin{proof}
  Using the three-term recurrence relation \eqref{eq:rec3} for OPUC,
  we have
  \begin{align*}
    \mu_{n,r,s+1}
    &= \langle \Phi_{s+1},z^n\Phi_r \rangle /\langle \Phi_{s+1},\Phi_{s+1} \rangle \\
    &= \left\langle \left(z + \frac{\overline{\alpha_s}}{\overline{\alpha_{s-1}}} \right)\Phi_s - \frac{\overline{\alpha_s}}{\overline{\alpha_{s-1}}}(1-|\alpha_{s-1}|^2)z\Phi_{s-1}, z^n\Phi_r \right\rangle /\langle \Phi_{s+1},\Phi_{s+1} \rangle\\
    &= \frac{1}{1-|\alpha_s|^2} \left( \mu_{n-1,r,s} + \frac{\overline{\alpha_s}}{\overline{\alpha_{s-1}}}\mu_{n,r,s} -  \frac{\overline{\alpha_s}}{\overline{\alpha_{s-1}}} \mu_{n-1,r,s-1} \right).
  \end{align*}
  It follows from the recurrence relation for the weight sum of
  Schr\"oder paths with the initial condition
  \( \mu_{0,r,s} = \delta_{r,s} \).
\end{proof}

Both \Cref{thm:Luka} and \Cref{thm:Sch} provide path interpretations
of \( \mu_{n,r,s} \). Consequently, it naturally follows that the
weight sums of Schr\"oder paths and \L{}ukasiewicz paths are equal.
The following proposition provides a combinatorial proof of this. Note
that the cardinality of \( \Sch_{n,r,s} \) is larger than that of
\( \Luka_{n,r,s} \). In the proof, we show that a group of Schr\"oder
paths corresponds to a \L{}ukasiewicz path.

\begin{prop}\label{prop:Luka=Sch}
  For nonnegative integers \( n,r \), and \( s \), we have
  \[
    \sum_{p \in \Sch_{n,r,s}}\wtS(p) = \sum_{q \in \Luka_{n,r,s}}\wtL(q),
  \]
  where \( \wtS \) and \( \wtL \) are the weight functions described
  in \Cref{def:Sch} and \Cref{def:Luka}, respectively.
\end{prop}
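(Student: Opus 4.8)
The plan is to describe an explicit weight-preserving map that collapses a family of Schr\"oder paths in $\Sch_{n,r,s}$ onto a single \L{}ukasiewicz path in $\Luka_{n,r,s}$, and to check that the total weight of each fiber equals the weight of the corresponding \L{}ukasiewicz step data. The key observation is that, in a Schr\"oder path, a vertical down-step $(0,-1)$ does not advance the $x$-coordinate, so a maximal run of consecutive vertical down-steps can be grouped with the horizontal or up-step that follows it (recall that a Schr\"oder path in $\Sch_{n,r,s}$ may not begin with a vertical down-step, so every vertical run is indeed preceded by an $x$-advancing step). This suggests defining $\phi\colon \Sch_{n,r,s}\to\Luka_{n,r,s}$ by reading the Schr\"oder path left to right and replacing each $x$-advancing step together with the maximal run of vertical down-steps immediately following it: an up-step followed by $j$ vertical down-steps becomes an $\ell$-down-step with $\ell=j-1$ (i.e.\ a net rise of $1-j$), and a horizontal step $(1,0)$ followed by $j$ vertical down-steps becomes an $\ell$-down-step with $\ell=j$. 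One checks that $\phi$ is well defined (each $x$-advancing step in $\Sch_{n,r,s}$ moves the path to $x$-coordinate increasing by $1$, for a total of $n$, and the net vertical displacement is $s-r$), so $\phi(p)\in\Luka_{n,r,s}$.

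Next I would analyze the fiber $\phi^{-1}(q)$ for a fixed \L{}ukasiewicz path $q$. An up-step of $q$ has a unique preimage block, namely a single up-step (no vertical down-steps may follow, since a vertical run after an up-step would decrease the net rise below $1$), contributing weight $\wtS = 1 = \wtL$. An $\ell$-down-step $(a,b)\to(a+1,b-\ell)$ of $q$ has two types of preimage blocks: a horizontal step at height $b$ followed by $\ell$ vertical down-steps descending from height $b$ to height $b-\ell$, or an up-step from height $b-1$ to $b$ followed by $\ell+1$ vertical down-steps descending from $b+1$ to $b-\ell$. Wait — I must instead track heights carefully: in the \L{}ukasiewicz path the step starts at height $b$, so its Schr\"oder preimage must start at height $b$; thus the first case is a horizontal step $(a,b)\to(a+1,b)$ followed by $\ell$ vertical steps $(a+1,b)\to(a+1,b-1)\to\cdots\to(a+1,b-\ell)$, and the second case is an up-step $(a,b)\to(a+1,b+1)$ followed by $\ell+1$ vertical steps down to $(a+1,b-\ell)$. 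Using \Cref{def:Sch}, the weight of the first block is
\[
  -\frac{\overline{\alpha_{b-1}}}{\overline{\alpha_b}}\cdot\prod_{i=1}^{\ell}\frac{\overline{\alpha_{b-i-1}}}{\overline{\alpha_{b-i}}}(1-|\alpha_{b-i}|^2)
  = -\frac{\overline{\alpha_{b-\ell-1}}}{\overline{\alpha_b}}\prod_{j=b-\ell}^{b-1}(1-|\alpha_j|^2),
\]
after telescoping, and the weight of the second block is
\[
  1\cdot\prod_{i=0}^{\ell}\frac{\overline{\alpha_{b-i-1}}}{\overline{\alpha_{b-i}}}(1-|\alpha_{b-i}|^2)
  = \frac{\overline{\alpha_{b-\ell-1}}}{\overline{\alpha_{b+1}}}\prod_{j=b-\ell}^{b}(1-|\alpha_j|^2).
\]

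Finally I would sum the two contributions and show the result equals $\wtL((a,b)\to(a+1,b-\ell)) = -\alpha_b\overline{\alpha_{b-\ell-1}}\prod_{j=b-\ell}^{b-1}(1-|\alpha_j|^2)$. Factoring out $\overline{\alpha_{b-\ell-1}}\prod_{j=b-\ell}^{b-1}(1-|\alpha_j|^2)$, the bracketed factor from the two blocks is $-\frac{1}{\overline{\alpha_b}} + \frac{1-|\alpha_b|^2}{\overline{\alpha_b}} = -\frac{|\alpha_b|^2}{\overline{\alpha_b}} = -\alpha_b$, which is exactly the required \L{}ukasiewicz weight. (The edge case $\ell = b$, where the path descends to height $0$, must be checked: the only available block is the horizontal one, since a preimage whose $x$-advancing step starts at height $b$ and ends at $b+1$ then needs $b+1$ vertical down-steps, which would require descending to $-1$ — impossible — so no "up-step" block exists; but here $-\alpha_b\overline{\alpha_{-1}} = \alpha_b$ using the convention $\alpha_{-1} = -1$, and indeed the lone horizontal block has weight $-\frac{\overline{\alpha_{-1}}}{\overline{\alpha_b}}\prod_{j=0}^{b-1}(1-|\alpha_j|^2) = \frac{1}{\overline{\alpha_b}}\prod_{j=0}^{b-1}(1-|\alpha_j|^2)$, and reconciling this with the desired $\alpha_b\prod_{j=0}^{b-1}(1-|\alpha_j|^2)$ forces me to double-check the height bookkeeping once more — this boundary reconciliation is the one place the argument needs care.) Multiplying over all steps and summing over the fiber gives $\sum_{p\in\phi^{-1}(q)}\wtS(p) = \wtL(q)$; summing over all $q\in\Luka_{n,r,s}$ then yields the claim. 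The main obstacle I anticipate is not the telescoping itself but getting the height offsets in the definition of $\phi$ exactly right so that the two block types at a given \L{}ukasiewicz step genuinely start at the same height, and handling the bottom boundary $\ell = b$ consistently with the convention $\alpha_{-1} = -1$.
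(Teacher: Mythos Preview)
Your approach is essentially the same as the paper's: both group the vertical down-steps with the preceding $x$-advancing step and check that the resulting weights match the \L{}ukasiewicz weights. The paper organizes this in two stages (first collapse every $UV$ pair to a modified horizontal step $H$ with weight $-\alpha_b\overline{\alpha_{b-1}}$, then biject the resulting $UV$-free Schr\"oder paths with \L{}ukasiewicz paths via $H_bV_b\cdots V_{b-k+1}\mapsto$ $k$-down-step), while you do it in one stage with a fiber sum. Both work.

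Two corrections. First, a typo: in your second block the telescoped denominator is $\overline{\alpha_b}$, not $\overline{\alpha_{b+1}}$; you silently use the correct value in the next line anyway. Second, and more importantly, your boundary worry at $\ell=b$ is based on a miscount: the up-block goes from height $b$ to $b+1$ and then takes $\ell+1=b+1$ vertical down-steps, landing at height $0$, not $-1$. So the up-block \emph{does} exist in this case, both preimage blocks are present, and your general fiber computation applies without modification. This is exactly why your attempted ``horizontal-only'' reconciliation failed: the lone horizontal block does not carry the full \L{}ukasiewicz weight, but adding the (legitimate) up-block restores it. Once you fix this, there is no edge case to handle and your argument goes through cleanly.
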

\begin{proof}
  Since the weight of each step depends only on the height of the
  starting point, we consider paths as seqeunces
  \( S_{i_1} \dots S_{i_n} \) where \( S \) is one of the up-step
  \( U=(1,1) \), horizontal step \( H = (1,0) \), and vertical
  down-step \( V = (0,-1) \), and \( i_j \) is the initial height of
  the \( j \)th step. We first have
  \[
    \wtS(U_bV_{b+1}) + \wtS(H_b) = 1\cdot(1-|\alpha_b|^2)\frac{\overline{\alpha_{b-1}}}{\overline{\alpha_b}} + \left(-\frac{\overline{\alpha_{b-1}}}{\overline{\alpha_{b}}}\right) = -\alpha_b \overline{\alpha_{b-1}}.
  \]
  Hence, \( \mu_{n,r,s} \) is equal to the weight sum of Schr\"oder
  paths from \( (0,r) \) to \( (n,s) \) with no subpath \( UV \),
  where weights of steps are defined by
  \begin{align*}
    \wt((a,b)\to (a+1,b+1)) &= 1, \\
    \wt((a,b) \to (a+1,b)) &= -\alpha_b \overline{\alpha_{b-1}}, \\
    \wt((a,b) \to (a,b-1)) &= \frac{\overline{\alpha_{b-2}}}{\overline{\alpha_{b-1}}}(1-|\alpha_{b-1}|^2).
  \end{align*}
  Further, since
  \[
    \wt(H_bV_bV_{b-1} \dots V_{b-k+1}) = -\alpha_b \overline{\alpha_{b-1}} \prod_{j=1}^{k}\left( \frac{\overline{\alpha_{b-j-1}}}{\overline{\alpha_{b-j}}}(1-|\alpha_{b-j}|^2) \right) = -\alpha_b \overline{\alpha_{b-k-1}}\prod_{j=b-k}^{b-1}(1-|\alpha_j|^2),
  \]
  we correspond a subpath \( H_bV_bV_{b-1} \dots V_{b-k+1} \) to a
  \( k \)-down-step \( (1,-k) \) starting at height \( b \) to
  complete the proof.
\end{proof}

\begin{remark}\label{rmk:RI_OPUC}
  It is known \cite{Ismail1995, KS2023} that a polynomial sequence
  \( (P_n(x))_{n\ge0} \) is a sequence of orthogonal polynomials of
  type \( R_I \) if and only if there exist
  \( (a_n)_{n\ge 1}, (b_n)_{n\ge0}, \) and \( (\lambda_n)_{n\ge 1} \)
  such that
  \begin{align}
    \label{eq:R1_rec}
    P_{n+1}(x) = (x - b_n)P_n(x) - (a_nx + \lambda_n) P_{n-1}(x), \quad n \geq 0,
  \end{align}
  where \( a_n \ne 0 \) and \( P_n(-\lambda_n/a_n) \ne 0 \), with the
  initial conditions \( P_{-1}(x) =0 \) and \( P_0(x) = 1 \). It is
  natural to ask about the relation between orthogonal polynomials of
  type \( R_I \) and OPUC once we recognize that \eqref{eq:rec3} is of
  the form as~\Cref{eq:R1_rec}. Indeed, the polynomial sequence
  \( (\Phi_n(z))_{n\ge0} \) satisfying~\eqref{eq:rec3} can be regarded
  as a special case of orthogonal polynomials of type \( R_I \) with
  respect to the linear functional \( \widehat{\cL} \) such that
\begin{equation}\label{eq:R1_ortho}
  \widehat{\cL}\left(z^{n}\frac{\Phi_m(z)}{d_m(z)}\right)=0, \quad 0\le n<m,
\end{equation}
where
\( d_m(z) = \prod_{i=1}^{m}\left(
  \frac{\overline{\alpha_i}}{\overline{\alpha_{i-1}}}(1-|\alpha_{i-1}|^2)z
\right) \). On the other hand, by~\Cref{thm:linear functional}, there
exists a linear functional \( \cL \) that satisfies the orthogonality
condition \eqref{eq:ortho_cond}. As noted in \Cref{rmk:LBP_OPUC}, for
the OPUC \( (\Phi_n(z))_{n\ge0} \) with respect to \( \cL \), the
orthogonality condition is equivalent to the condition
\begin{equation}\label{eq:half_ortho}
  \cL(z^{-n}\Phi_m(z)) = \kappa_n\delta_{n,m}, \quad 0 \le n \le m.  
\end{equation}
According to \cite[Proposition 4.4]{KS2023}, a relation between the
linear functionals \( \widehat{\cL} \) and \( \cL \) is given by
  \begin{equation}\label{eq:relation_two_L}
    \cL(f(z))=\overline{\alpha_0}\widehat{\cL}(z^{-1}f(z))
  \end{equation}
  for any \( f(z)\in V \). This relation allows us to
  recover~\eqref{eq:R1_ortho} from~\eqref{eq:ortho_cond}. Thus, when
  \( \alpha_n \ne 0 \), OPUC can be viewed as a modified (but
  essentially the same) version of a special case of orthogonal
  polynomials of type \( R_I \). Note that it is not true when
  \( \alpha_k = 0 \) for some \( k \).
\end{remark}

\begin{remark}
  In \cite{KS2023}, the authors showed that
  \(
  \widehat{\mu}_{n,r,s}:=\widehat{\cL}\left(z^n\Phi_r(z)\frac{\Phi_s(z)}{d_s(z)}\right)
  \) is the weight sum of Schr\"oder paths from \( (0,r) \) to
  \( (n,s) \), where the weight of each step is given as
  in~\Cref{def:Sch}. In \cite{KS2023}, they denote this as
  \( \mu_{n,r,s} \). Since
  \[
   d_s(z) = \frac{\overline{\alpha_s}}{\overline{\alpha_0}}\langle \Phi_s, \Phi_s \rangle_{\cL} z^s, \qand z^{-s}\Phi_s(z)=\overline{\Phi_s^*}(1/z),
  \]
  by~\eqref{eq:relation_two_L}, we obtain
  \begin{equation}\label{eq:Sch_KS}
    \widehat{\mu}_{n,r,s}=\widehat{\cL}\left(z^n\Phi_r(z)\frac{\Phi_s(z)}{d_s(z)}\right) = \frac{\overline{\alpha_0}L_1(z^{n}\Phi_r(z)\overline{\Phi_s^*}(1/z))}{\overline{\alpha_s}\langle \Phi_s, \Phi_s \rangle_{\cL}} = \frac{1}{\overline{\alpha_s}}\cdot\frac{\langle z^{n+1}\Phi_r, \Phi_s^* \rangle_{\cL}}{\langle \Phi_s, \Phi_s \rangle_{\cL}}.
  \end{equation}
  We revisit \( \widehat{\mu}_{n,r,s} \) in~\Cref{sec:generalized_lc}.
\end{remark}

\section{Explicit formulas of generalized moments for some families of OPUC}
\label{sec:examples}

In this section, we provide explicit formulas for \( \mu_{n,m} \) and
\( \mu_{n,r,s} \) for various examples of OPUC, as an application of
combinatorial interpretations for the generalized moments. Since all
examples in this section satisfy~\eqref{eq:rec1}, we classify them
according to their Verblunsky coefficients
\( (\alpha_n)_{n \geq 0} \). Most of these examples are introduced in
\cite{Ismail2009} or \cite{Simon2005a}.

\subsection{Geronimus polynomials}
For \( \alpha\in \mathbb{D} \), the Geronimus polynomials are defined
by the Verblunsky coefficients
\[
  \alpha_n = \alpha, \quad n \geq 0.
\]
For more information concerning Geronimus polynomials, see \cite[Example~1.6.12]{Simon2005a}.

Suppose \( \alpha = 0 \), which gives the simplest example
\( \Phi_n(z) = z^n \) of OPUC. This implies that
\( \mu_{n,r,s} = \langle z^s,z^{n+r} \rangle = \mu_{n+r-s} \). Using
the \L{}ukasiewicz path model, we see that \( \mu_{n,r,s} \) is the
weight sum of lattice paths from \( (0,0) \) to \( (n+r-s,0) \) where
we only use up-steps with weight \( 1 \). Therefore,
\( \mu_{n,r,s} = \mu_{n+r-s} = \delta_{n+r-s,0} \). Now, consider the
case \( \alpha\ne0 \).

\begin{prop}\label{pro:Geronimus_nrs}
  For the Geronimus polynomials, suppose \( \alpha \ne 0 \). Then, we
  have
  \[
    \sum_{n\ge0}\mu_{n,m}z^n = \frac{2|\alpha|^2(2z)^m}{\left(2|\alpha|^2+\alpha-\alpha z-\alpha\sqrt{1-2z+4|\alpha|^2z+z^2}\right)\left(1+z+\sqrt{1-2z+4|\alpha|^2z+z^2}\right)^m}.
  \]
\end{prop}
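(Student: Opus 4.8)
The plan is to run the \L{}ukasiewicz-path model of \Cref{thm:Luka}. Writing $f_m(z)=\sum_{n\ge0}\mu_{n,m}z^n$, this is the length generating function for the paths in $\Luka_{n,m}$ with the weights of \Cref{def:Luka}; for $\alpha_n=\alpha$ these become: an up-step has weight $1$; a $k$-down-step from height $b$ to $b-k>0$ has weight $-|\alpha|^2(1-|\alpha|^2)^k$; and a $k$-down-step landing on the $x$-axis (i.e.\ $b=k$) has weight $-\alpha\,\overline{\alpha_{-1}}(1-|\alpha|^2)^k=\alpha(1-|\alpha|^2)^k$, by the convention $\alpha_{-1}=-1$. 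Thus the step weights are height-independent except at the ground, and separating ground contacts will be the only delicate bookkeeping.

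First I would record a structural decomposition. Let $C(z)$ be the length generating function for \L{}ukasiewicz excursions from $(0,0)$ to $(N,0)$ staying weakly above the $x$-axis, but in which \emph{every} $k$-down-step gets the generic weight $-|\alpha|^2(1-|\alpha|^2)^k$; and note that $B(z):=f_0(z)$ is the same generating function with the true ground-sensitive weights. Since an up-step raises the height by exactly $1$, in a path counted by $f_m$ the last visit to level $j$ (for $0\le j<m$) is immediately followed by an up-step to level $j+1$; cutting there writes the path as a level-$0$ excursion (true weights), an up-step, a level-$1$ excursion, an up-step, $\dots$, an up-step, a level-$m$ excursion. A level-$j$ excursion with $j\ge1$, translated down to the axis, has all its down-steps landing strictly above the ground, hence all generic weights, so its generating function is $C(z)$. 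Therefore
\[
  f_m(z)=B(z)\,\bigl(zC(z)\bigr)^m,
\]
and it remains to find $B$ and $C$. The same cutting, performed with generic weights throughout, also shows that a generic path from $(0,0)$ to height $k$ has generating function $z^kC(z)^{k+1}$.

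Next I would obtain closed forms by a last-step decomposition. Splitting an excursion counted by $C$ on its final $k$-down-step (from height $k$, weight $-|\alpha|^2(1-|\alpha|^2)^k$), whose preceding part is a generic path from $0$ to $k$, gives $C=1-|\alpha|^2zC\sum_{k\ge0}\bigl((1-|\alpha|^2)zC\bigr)^k$, i.e.\ the quadratic $(1-|\alpha|^2)zC^2-(1+z)C+1=0$. Writing $\Delta=\sqrt{1-2z+4|\alpha|^2z+z^2}$ and taking the branch with $C(0)=1$, the identity $(1+z)^2-\Delta^2=4(1-|\alpha|^2)z$ yields $C(z)=\frac{(1+z)-\Delta}{2(1-|\alpha|^2)z}=\frac{2}{1+z+\Delta}$. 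Likewise, splitting an excursion counted by $B$ on its final (necessarily ground-touching) $k$-down-step, whose preceding part is a true path from $0$ to $k$ with generating function $f_k=B\,(zC)^k$, gives $B=1+\alpha z\sum_{k\ge0}(1-|\alpha|^2)^kf_k=1+\frac{\alpha zB}{1-(1-|\alpha|^2)zC}$, so $B=\frac{1-(1-|\alpha|^2)zC}{1-(1-|\alpha|^2)zC-\alpha z}$. Substituting $(1-|\alpha|^2)zC=\frac{1+z-\Delta}{2}$ and simplifying with $(1-z)^2-\Delta^2=-4|\alpha|^2z$ turns this into $B(z)=\frac{2|\alpha|^2}{2|\alpha|^2+\alpha(1-z-\Delta)}$. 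Combining the two, $f_m(z)=B(z)\bigl(zC(z)\bigr)^m=\frac{2|\alpha|^2}{2|\alpha|^2+\alpha-\alpha z-\alpha\Delta}\bigl(\frac{2z}{1+z+\Delta}\bigr)^m$, which is the asserted identity. (Alternatively, feeding the recursion in the proof of \Cref{thm:Luka} into $\sum_nz^n$ produces a linear system for $(f_k)_{k\ge0}$ that one can solve by the geometric ansatz $f_k=(zC)^kf_0$.)

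The computations are routine; the only real subtlety is the ground-touching weight. Because $\alpha_{-1}=-1$, a down-step onto the $x$-axis carries a weight proportional to $\alpha$ rather than to $|\alpha|^2$, and this single exception is exactly what produces the asymmetric factor $2|\alpha|^2+\alpha(1-z-\Delta)$ in the denominator; keeping the two excursion series $B$ and $C$ apart is what makes this transparent. The remaining care-points are choosing the power-series branch of each quadratic (the branch with $C(0)=1$, equivalently $zC(0)=0$) and the two radical identities $(1+z)^2-\Delta^2=4(1-|\alpha|^2)z$ and $(1-z)^2-\Delta^2=-4|\alpha|^2z$ used in the simplifications.
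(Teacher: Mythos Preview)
Your argument is correct. The paper, however, runs the \emph{Schr\"oder} model of \Cref{thm:Sch} rather than the \L{}ukasiewicz model you chose: with $\alpha_n=\alpha$ the Schr\"oder step weights are height-independent except at levels $0$ and $1$, so a first-step decomposition immediately yields two algebraic equations
\[
  f(z)=1-zf(z)+(1-|\alpha|^2)zf(z)^2,\qquad
  g(z)=1+\tfrac{\alpha}{|\alpha|^2}zg(z)-\tfrac{\alpha}{|\alpha|^2}(1-|\alpha|^2)zf(z)g(z),
\]
and then $\sum_{n\ge0}\mu_{n,m}z^n=g(z)\bigl(zf(z)\bigr)^m$ by the same last-visit factorization you used. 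Structurally the two proofs are parallel (a ``generic'' excursion series and a ``ground'' series, assembled multiplicatively), and indeed your $C$ and $B$ solve exactly the paper's equations for $f$ and $g$. The trade-off is that the Schr\"oder model has only three step types, so the functional equations come for free without the geometric summation over $k$-down-steps that you had to perform; on the other hand, your route stays entirely within the \L{}ukasiewicz framework of \Cref{thm:Luka} and does not invoke the additional model of \Cref{def:Sch}, which makes the role of the convention $\alpha_{-1}=-1$ (and hence the asymmetric $\alpha$-factor in the denominator) especially transparent.
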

\begin{proof}
  Using the Schr\"{o}der path model, we have
  \[
    \mu_{n,m}=\sum_{p \in \Sch_{n,0,m}}\wt(p),
  \]
  where
  \begin{align*}
    \wt((a,b)\to (a+1,b+1)) &= 1, &&\\
    \wt((a,b) \to (a+1,b)) &=
                             \begin{cases}
                               \frac{\alpha}{|\alpha|^2}& \mbox{if \( b=0 \)},\\
                               -1& \mbox{if \( b>0 \),}
                             \end{cases}\\
    \wt((a,b) \to (a,b-1)) &=
                             \begin{cases}
                               -\frac{\alpha}{|\alpha|^2}(1-|\alpha|^2)& \mbox{if \( b=1 \)},\\
                               1-|\alpha|^2& \mbox{if \( b>1 \).}
                             \end{cases}
  \end{align*}
  To obtain the generating function for \( \mu_{n,m} \), we consider
  Schr\"oder paths with another weights. Let \( \lambda_n \) be the
  weight sum of Schr\"oder paths from \( (0,0) \) to \( (n,0) \) where
  \[
    \wt((a,b)\to (a+1,b+1)) = 1, \quad \wt((a,b) \to (a+1,b)) = -1, \qand
    \wt((a,b) \to (a,b-1)) = 1-|\alpha|^2.
  \]
  Let \( f(z) = \sum_{n\ge0}\lambda_nz^n \) and
  \( g(z) = \sum_{n\ge0} \mu_{n,0}z^n \). Then, classifying the
  Schr\"oder paths in terms of the first step, one can have two
  functional equations:
  \begin{align*}
    f(z) &= 1 -zf(z) + (1-|\alpha|^2)zf(z)^2, \\
    g(z) &= 1+\frac{\alpha}{|\alpha|^2}zg(z)-\frac{\alpha}{|\alpha|^2}(1-|\alpha|^2)zf(z)g(z), \end{align*}
  see~\Cref{fig:two_fct_eq}.
  Solving these equations, we obtain
  \[
    g(z) = \frac{2|\alpha|^2}{2|\alpha|^2+\alpha-\alpha z-\alpha\sqrt{1-2z+4|\alpha|^2z+z^2}},\quad f(z) =\frac{2}{1+z+\sqrt{1-2z+4|\alpha|^2z+z^2}}.
  \]
  This completes the proof since
  \( \sum_{n\ge0}\mu_{n,m}z^n = g(z)\left( zf(z) \right)^m \),
  see~\Cref{fig:mu_nm}.
\end{proof}

\begin{figure}
  \centering
    \begin{tikzpicture}
    \begin{scope}[scale=0.7]
      \node at (-3.5,0) {\( f(z) = \)};
      
      \node at (-2,0) [circle, fill,inner sep=.5pt]{};
      
      \node at (-1,0) {\( + \)};
      
      \draw (0,0) -- node [above] {\( -z \)} (1,0);
      \draw (1,0) arc (180:0:1.5) -- cycle;
      \node at (2.5,0.6) {\( f(z) \)};

      \node at (5,0) {\( + \)};
      
      \draw (6,0) -- node [text width=0.3cm,above ] {\( z \)}  (7,1);
      \draw (7,1) arc (180:0:1.5) -- cycle;
      \node at (8.5,1.6) {\( f(z) \)};
      \draw (10,1) --  node [ left ] {\( 1-|\alpha|^2 \)}  (10,0);
      \node at (11.5,0.6) {\( f(z) \)};
      \draw (10,0) arc (180:0:1.5) -- cycle;
      
      \node at (13.5,0) {,};
    \end{scope} 
\begin{scope}[scale=0.7, yshift= -3.5cm]
      \node at (-3.5,0) {\( g(z) = \)};
      
      \node at (-2,0) [circle, fill,inner sep=.5pt]{};
      
      \node at (-1,0) {\( + \)};
      
      \draw (0,0) -- node [above] {\( \frac{\alpha}{|\alpha|^2} z \)} (1,0);
      \draw (1,0) arc (180:0:1.5) -- cycle;
      \node at (2.5,0.6) {\( g(z) \)};

      \node at (5,0) {\( + \)};
      
      \draw (6,0) -- node [text width=0.3cm,above ] {\( z \)}  (7,1);
      \draw (7,1) arc (180:0:1.5) -- cycle;
      \node at (8.5,1.6) {\( f(z) \)};
      \draw (10,1) --  node [ left, font=\footnotesize ] {\( -\frac{\alpha}{|\alpha|^2}(1-|\alpha|^2)\)}  (10,0);
      \node at (11.5,0.6) {\( g(z) \)};
      \draw (10,0) arc (180:0:1.5) -- cycle;
      
      \node at (13.5,0) {.};
    \end{scope} 
  \end{tikzpicture}
  \caption{A visualization of two functional equations for \( f(z) \) and \( g(z) \).} 
  \label{fig:two_fct_eq}
\end{figure}

\begin{figure}
    \centering
  \begin{tikzpicture}
    \begin{scope}[scale=0.7]
      \draw (0,0) arc (0:180:1.5) -- cycle;
      \draw (0,0) -- node [text width=0.3cm,above ] {\( z \)}  (1,1);
      \draw (1,1) arc (180:0:1.5) -- cycle;
      \draw (4,1) -- (5,2);
      \draw (5,2) arc (180:0:1.5) -- cycle;
      \draw (8,2) -- node [text width=0.3cm,above ] {\( z \)} (9,3);
      \draw (9,3) arc (180:0:1.5) -- cycle;      
      \node at (8.6,2.0){$\uparrow$};
      \node at (8.6,1.3){the $m$th up-step};
      \node at (4.4,2) {\reflectbox{$\ddots$}};
      \node at (-1.5,0.6) {\( g(z) \)};
      \node at (2.5,1.6) {\( f(z) \)};
      \node at (6.5,2.8) {\reflectbox{$\ddots$}};
      \node at (10.5,3.6) {\( f(z) \)};
    \end{scope} 
  \end{tikzpicture}
  \caption{A visualization for \( \sum_{n\ge0}\mu_{n,m}z^n = g(z)\left( zf(z) \right)^m \).} 
  \label{fig:mu_nm}
\end{figure}

\begin{remark}
  By \Cref{pro:Geronimus_nrs}, one can derive an explicit formula for
  \( \mu_{n,m} \). Briefly, one can check that
  \begin{align*}
    [z^0]g(z) &= 1,  \quad [z^n]\left( zf(z) \right)^m = \delta_{n,m}\quad\text{for \;} n\le m,\\ [z^n]g(z)&=\sum_{l=0}^n\sum_{k=0}^{n-l}\frac{l}{k+l}\binom{n+k-1}{k,k+l-1,n-k-l}(-|\alpha|^2)^k\alpha^l\quad \text{for \;} n>0,\\
    [z^n]\left( zf(z) \right)^m &= \sum_{k=0}^{n-m}\frac{m}{k+m}\binom{n+k-1}{k,k+m-1,n-k-m}(-1)^{n-k-m}(1-|\alpha|^2)^k\quad \text{for \;} n>m.
  \end{align*}
  Using these, a summation form for
  \( \mu_{n,m} = [z^n]g(z)(zf(z))^m \) is obtained. However, finding a
  simple closed formula without summation is challenging. For
  instance, consider the case when \( \alpha = 1 \). We then have
  \begin{equation}\label{eq:geronimus_simple}
    \mu_{n,m} = [z^n]\frac{1}{1-z}\left(\frac{z}{1+z}\right)^m = \sum_{k=0}^{n-m}\binom{m+k-1}{k}(-1)^k.
  \end{equation}
  Even though this is a very simple case, we cannot expect a simple
  form without summation for~\eqref{eq:geronimus_simple} for general
  \( m \) as it does not factor.
\end{remark}

\begin{remark}\label{rmk:nm_to_nrs}
  Let \( \Phi_n(z)=\sum_{i=0}^np_{n,i}z^i \). If we have explicit
  formulas of \( \mu_{n,m} \) and \( p_{n,i} \), then it is possible
  to obtain an explicit formula of \( \mu_{n,r,s} \) by a direct
  computation:
  \begin{align}
\label{eq:nrs_from_nm}    \mu_{n, r, s} &=\frac{\left\langle\Phi_s , z^n \Phi_r\right\rangle}{\left\langle\Phi_s, \Phi_s\right\rangle}  =\sum_{i=0}^r\overline{p_{r,i}} \frac{\left\langle\Phi_s, z^{n+i}\right\rangle}{\left\langle\Phi_s , \Phi_s\right\rangle} = \sum_{i=0}^r\overline{p_{r,i}} \mu_{n+i, s}.
   \end{align}
   This approach will be used for other examples. For the Geronimus
   case when \( \alpha \not= 0 \), one can check that
  \[
    p_{n,i} = [z^n]\left( 1-\frac{\overline{\alpha}}{1-z} \right)\left( 1+ \frac{|\alpha|^2z}{1+z} \right)^i = \sum_{j=0}^{n-i}\left( \binom{i}{j}-\frac{1}{\alpha}\binom{i}{j-1} \right)\binom{n-i-1}{j-1}|\alpha|^{2j},
  \]
  where \( \binom{-1}{-1} =1 \), by showing that \( p_{n,i} \)
  satisfies the following recurrence relation:
  \[
    p_{n,i} = p_{n-1,i-1} +p_{n-1,i}-(1-|\alpha|^2)p_{n-2,i-1},
  \]
  where \( p_{k,k}=1 \) and \( p_{k,0} = -\overline{\alpha}\) for
  \( k\ge0 \).
\end{remark}

Note that our calculations in this subsection are related to the
Riordan group theory. As a fancy tool inherenting the Lagrange
inversion foumula, the Rioran group theory simplifies many
combinatorial calculations. For those who are intersted in this
theory, we refer to \cite{DFSW2024, SSBCHMW2022}.

\subsection{Bernstein--Szeg\H{o} polynomials}
For \( \zeta \in \DD \), the Bernstein--Szeg\H{o} polynomials are
defined by the Verblunsky coefficients
\[
  \alpha_0 = \zeta \qand \alpha_j = 0, \quad j\geq 1.
\]
From~\eqref{eq:rec1}, one can directly obtain
\( \Phi_n(z)=z^n-\overline{\zeta}z^{n-1} \) for \( n\ge1 \). Since
\( 0 = \cL(\overline{\Phi_n(z)}) =\mu_n - \zeta\mu_{n-1} \) for
\( n\ge1 \), we have
\( \mu_n = \zeta \mu_{n-1} = \dots = \zeta^n\mu_0 = \zeta^n \) for all
\( n \geq 0 \). In general,
\[
  \mu_{n,m} = \frac{\langle \Phi_m,z^n  \rangle}{\langle \Phi_m,\Phi_m \rangle}  = \frac{\mu_{n-m}-\overline{\zeta}\mu_{n-m+1}}{1-|\zeta|^2} = \frac{1-|\zeta|^2}{1-|\zeta|^2}\zeta^{n-m} = \zeta^{n-m} \quad\mbox{for all \( n \geq m \)}.
\]
If \( r\ge1 \), then we have
\[
  \mu_{n,r,s}= \frac{\langle \Phi_s,z^n\Phi_r  \rangle}{\langle \Phi_s,\Phi_s \rangle}  = \frac{\langle \Phi_s,\Phi_{n+r} \rangle}{\langle \Phi_s,\Phi_s \rangle} = \delta_{s,n+r}.
\]
To summarize these results, we have the following proposition.
\begin{prop}\label{pro:BS}
  For the Bernstein--Szeg\H{o} polynomials, we have
  \[
    \mu_{n,m} =
    \begin{cases}
      \delta_{n,m} & \mbox{if \( n\leq m \)},\\
      \zeta^{n-m} & \mbox{if \( n> m \),}
    \end{cases}
    \qand
    \mu_{n,r,s} =
    \begin{cases}
      \mu_{n,s} & \mbox{if \( r=0 \)},\\
      \delta_{s,n+r} & \mbox{if \( r >0 \).}
    \end{cases}
  \]

\end{prop}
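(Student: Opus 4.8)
The statement packages together the explicit formulas for $\mu_{n,m}$ and for $\mu_{n,r,s}$, and my plan is to obtain both by first pinning down $\Phi_n(z)$ and the normalizations $\kappa_n$ explicitly and then assembling. From Szeg\H{o}'s recurrence \eqref{eq:rec1} with $\alpha_0 = \zeta$ and $\alpha_j = 0$ for $j \ge 1$ one reads off $\Phi_0(z) = 1$ and $\Phi_n(z) = z^n - \overline{\zeta}\,z^{n-1}$ for $n \ge 1$, and via \eqref{eq:kappa} that $\kappa_0 = 1$ and $\kappa_n = 1 - |\zeta|^2$ for $n \ge 1$. The moment $\mu_n$ is then forced by $\cL(\overline{\Phi_n}(1/z)) = 0$, which reads $\mu_n - \zeta\mu_{n-1} = 0$, so $\mu_n = \zeta^n$ for all $n \ge 0$. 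This is the backbone of the argument; the two displayed formulas are consequences.

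For $\mu_{n,m} = \mu_{n,0,m}$ I would split on whether $n \le m$. When $n \le m$, we have $\langle \Phi_m(z), z^n\rangle = \cL(\Phi_m(z)z^{-n}) = \kappa_n\delta_{n,m}$ by the half-orthogonality \eqref{eq:equiv_cond2} of \Cref{lem:equiv_ortho_cond}, and dividing by $\langle \Phi_m, \Phi_m\rangle = \kappa_m$ gives $\delta_{n,m}$. When $n > m$, I expand $\Phi_m(z) = z^m - \overline{\zeta}\,z^{m-1}$ (valid for $m \ge 1$; the case $m = 0$ is just $\mu_n = \zeta^n$), so $\langle\Phi_m(z), z^n\rangle = \mu_{n-m} - \overline{\zeta}\,\mu_{n-m+1} = \zeta^{n-m}(1 - |\zeta|^2)$, and dividing by $\kappa_m = 1 - |\zeta|^2$ yields $\zeta^{n-m}$ (the boundary value $n=m$ agreeing with $\delta_{m,m}$). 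For $\mu_{n,r,s}$: if $r = 0$ it is $\mu_{n,s}$ by definition; if $r \ge 1$ then $z^n\Phi_r(z) = z^{n+r} - \overline{\zeta}\,z^{n+r-1} = \Phi_{n+r}(z)$ (legitimate because $n+r \ge 1$), whence $\mu_{n,r,s} = \langle\Phi_s(z),\Phi_{n+r}(z)\rangle/\langle\Phi_s(z),\Phi_s(z)\rangle = \delta_{s,n+r}$ by orthogonality.

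Alternatively, and perhaps more in keeping with the theme of \Cref{sec:examples}, one can run the whole argument through the \L{}ukasiewicz model of \Cref{thm:Luka}. With $\alpha_{-1} = -1$ and $\alpha_j = 0$ for $j \ge 1$, the weight $\wtL((a,b)\to(a+1,b-k)) = -\alpha_b\overline{\alpha_{b-k-1}}\prod_{j=b-k}^{b-1}(1-|\alpha_j|^2)$ vanishes unless $b = 0$ and $k = 0$, in which case it equals $-\alpha_0\overline{\alpha_{-1}} = \zeta$, while up-steps have weight $1$. So in $\Luka_{n,r,s}$ the only contributing paths are those built from up-steps and height-$0$ horizontal steps. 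If $r \ge 1$ the path starts at height $r \ge 1$ and up-steps never decrease height, so it can never touch height $0$; only the all-up-step path survives, forcing $s = n + r$ with weight $1$, i.e. $\mu_{n,r,s} = \delta_{s,n+r}$. If $r = 0$, a surviving path is a run of height-$0$ horizontals followed by up-steps, and reaching $(n,m)$ requires exactly $m$ up-steps and $n - m$ horizontals, giving weight $\zeta^{n-m}$ when $n \ge m$ and no path (hence $0$) when $n < m$ — precisely the stated formula for $\mu_{n,m}$.

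There is no genuine obstacle here: the proposition is essentially bookkeeping, and the two routes above are interchangeable. The only points that need a little care are the case split $n \le m$ versus $n > m$ in the formula for $\mu_{n,m}$ (the orthogonality shortcut applies only on the former range) and the reminder that $\Phi_n(z) = z^n - \overline{\zeta}\,z^{n-1}$ holds only for $n \ge 1$, so one must confirm $n + r \ge 1$ before writing $z^n\Phi_r(z) = \Phi_{n+r}(z)$.
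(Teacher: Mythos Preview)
Your proposal is correct, and your first route---computing $\Phi_n(z)=z^n-\overline{\zeta}\,z^{n-1}$ from Szeg\H{o}'s recurrence, deducing $\mu_n=\zeta^n$ from $\cL(\overline{\Phi_n}(1/z))=0$, then expanding $\langle\Phi_m(z),z^n\rangle$ directly and using $z^n\Phi_r(z)=\Phi_{n+r}(z)$ for $r\ge1$---is exactly the argument the paper gives in the discussion preceding the proposition. Your \L{}ukasiewicz alternative is a pleasant extra not used in the paper for this example (the paper remarks that the Bernstein--Szeg\H{o} case is handled directly from the formula for $\Phi_n(z)$ rather than via a path model), but it recovers the same answers with no more effort.
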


\subsection{Single inserted mass point}
For \( \gamma \in (0,1) \), the OPUC that correspond to the single
inserted mass point measure
\( d\mu = (1-\gamma)\frac{d\theta}{2\pi} + \gamma \delta(\theta-0) \),
where \( \delta(\theta-0) \) is a point mass at \( \theta = 0 \), are
defined in \cite[Example~1.6.3]{Simon2005a} by the Verblunsky
coefficients
\[
  \alpha_n = \frac{\gamma}{1+n\gamma}, \quad n \geq 0.
\]
\begin{prop}\label{pro:simp_nm}
  For the OPUC that correspond to the single inserted mass point
  measure, we have
  \begin{align*}
    \mu_{n,m} =
    \begin{cases}
      \delta_{n,m} & \mbox{if \( n \le m \)},\\
      \frac{\gamma}{1+m\gamma} & \mbox{if \( n > m \).}
    \end{cases}
  \end{align*}
\end{prop}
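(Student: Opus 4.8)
The plan is to use one of the three combinatorial models to compute $\mu_{n,m}$ directly. I would work with the \L{}ukasiewicz path model of \Cref{thm:Luka}, since the Verblunsky coefficients $\alpha_n = \gamma/(1+n\gamma)$ are all nonzero, but the Schr\"oder model would work just as well. First I would observe that for $n \le m$ the claim $\mu_{n,m} = \delta_{n,m}$ is already contained in \Cref{lem:equiv_ortho_cond} (equivalently the discussion after \Cref{def:mu_nrs}), which tells us $(\mu_{i,j})$ is lower-triangular with unit diagonal; so the only work is the case $n > m$. For that case, I would enumerate the \L{}ukasiewicz paths in $\Luka_{n,0,m}$ and show that almost all of them cancel, leaving the single term $\gamma/(1+m\gamma)$.

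The key structural point to exploit is that $\alpha_n = \gamma/(1+n\gamma)$ satisfies $1 - |\alpha_n|^2 = (1-\alpha_n)(1+\alpha_n)$, and more usefully, a telescoping-type identity among the $\alpha_n$. Concretely, I expect the recurrence
\[
  \mu_{n,m} = \mu_{n-1,m-1} - \sum_{k=m}^{n-1} \alpha_k \overline{\alpha_{m-1}} \prod_{j=m}^{k-1}(1-|\alpha_j|^2)\,\mu_{n-1,m,k}
\]
from the proof of \Cref{thm:Luka} (with $r=0$) to collapse. One approach: guess the answer $\mu_{n,m} = \gamma/(1+m\gamma) = \alpha_m$ for $n>m$ (note $\alpha_m$ is real here, so $\overline{\alpha_{m-1}} = \alpha_{m-1}$), substitute into the recurrence, and verify the identity
\[
  \alpha_m = \alpha_{m-1} - \alpha_{m-1}\sum_{k=m}^{n-1}\alpha_k\Bigl(\prod_{j=m}^{k-1}(1-\alpha_j^2)\Bigr)\alpha_k + (\text{boundary term from }\mu_{n-1,m,m}=1)
\]
by induction on $n$; here $\mu_{n-1,m,m}=1$ contributes the $k=m$ term with $\mu_{n-1,m,m}$ in place of a lower moment. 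The heart of the matter is a single algebraic identity in $\gamma$: one checks that $\alpha_k \prod_{j=m}^{k-1}(1-\alpha_j^2)$ telescopes. Indeed $1-\alpha_j^2 = \frac{(1+(j-1)\gamma)(1+(j+1)\gamma)}{(1+j\gamma)^2}$, so $\prod_{j=m}^{k-1}(1-\alpha_j^2) = \frac{(1+(m-1)\gamma)(1+k\gamma)}{(1+m\gamma)(1+(k-1)\gamma)}$ by telescoping in two directions, whence $\alpha_k\prod_{j=m}^{k-1}(1-\alpha_j^2) = \frac{\gamma(1+(m-1)\gamma)}{(1+m\gamma)(1+(k-1)\gamma)} = \frac{1+(m-1)\gamma}{1+m\gamma}\bigl(\tfrac{1}{1+(k-1)\gamma} - \tfrac{1}{1+k\gamma}\bigr)$, another telescoping sum over $k$.

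I would then sum the geometric-looking series over $k$ from $m$ to $n-1$, which collapses to a difference of two reciprocal terms, and plug back into the recurrence with $\mu_{n-1,m,k} = \alpha_k$ (for $k>m$, by induction) and $\mu_{n-1,m,m}=1$; after simplification the $n$-dependence should vanish entirely and leave $\alpha_m = \gamma/(1+m\gamma)$. The main obstacle I anticipate is purely bookkeeping: getting the index ranges in the double telescoping exactly right and correctly separating the $k=m$ summand (which multiplies $\mu_{n-1,m,m}=1$, not $\mu_{n-1,m,m}$-as-a-moment) from the rest. Once the telescoping identity above is established cleanly, the induction closes immediately and the base case $n=m+1$ reduces to $\mu_{m+1,m} = \mu_{m,m-1} - \alpha_m\alpha_{m-1}\cdot 1 = \alpha_{m-1} - \alpha_m\alpha_{m-1}$, which one verifies equals $\alpha_m$ using $\alpha_{m-1}(1-\alpha_m) = \alpha_m$, i.e.\ $\frac{\gamma}{1+(m-1)\gamma}\cdot\frac{1+(m-1)\gamma}{1+m\gamma} = \frac{\gamma}{1+m\gamma}$.
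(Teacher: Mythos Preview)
Your strategy is the same as the paper's---guess $\mu_{n,m}=\alpha_m$ for $n>m$ and verify it satisfies the recurrence coming from a path model---but you work with the \L{}ukasiewicz recurrence while the paper uses the Schr\"oder one. The Schr\"oder choice is tidier here: it gives a three-term recurrence, so the verification is a one-line substitution, whereas your \L{}ukasiewicz recurrence has $n-m$ summands and forces a telescoping argument.

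Two bookkeeping slips to repair. First, with $r=0$ and $s=m$ the \L{}ukasiewicz recurrence from the proof of \Cref{thm:Luka} reads
\[
\mu_{n,m}=\mu_{n-1,m-1}-\alpha_{m-1}\sum_{k=m}^{n-1}\alpha_k\Bigl(\textstyle\prod_{j=m}^{k-1}(1-\alpha_j^2)\Bigr)\,\mu_{n-1,k},
\]
so the summands involve $\mu_{n-1,k}$, not $\mu_{n-1,m,k}$, and the term equal to $1$ is the top one $k=n-1$ (where $\mu_{n-1,n-1}=1$), not the $k=m$ term. Second, your partial-fractions line is off by a factor of $1+k\gamma$: one has
\[
\frac{1}{1+(k-1)\gamma}-\frac{1}{1+k\gamma}=\frac{\gamma}{(1+(k-1)\gamma)(1+k\gamma)},
\]
not $\gamma/(1+(k-1)\gamma)$. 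The sum you actually face after substituting the inductive hypothesis is $\sum_{k=m}^{n-2}\alpha_k^2\prod_{j=m}^{k-1}(1-\alpha_j^2)$, and this telescopes more simply via
\[
\alpha_k^2\prod_{j=m}^{k-1}(1-\alpha_j^2)=\prod_{j=m}^{k-1}(1-\alpha_j^2)-\prod_{j=m}^{k}(1-\alpha_j^2),
\]
giving $1-\prod_{j=m}^{n-2}(1-\alpha_j^2)$. Adding the $k=n-1$ term and using your product formula then yields $\mu_{n,m}=\alpha_{m-1}(1-\alpha_{n-1})\prod_{j=m}^{n-2}(1-\alpha_j^2)=\alpha_m$, so the induction does close once these are fixed.
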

\begin{proof}
  Since \( \alpha_n\ne0 \), we can use Schr\"oder path model to obtain
  the following recurrence:
  \begin{align*}
    \mu_{n,m} &= \mu_{n-1,m-1} - \frac{1+m\gamma}{1+(m-1)\gamma}\mu_{n-1,m} + \frac{1+(m+1)\gamma}{1+m\gamma}\mu_{n,m+1},& &m>0,\\
    \mu_{n,m} &= \frac{1}{\gamma}\mu_{n-1,m} - \frac{1-\gamma^2}{\gamma}\mu_{n,m+1},& &m=0.
  \end{align*}
  From the Schr\"oder path model, it is obvious that
  \( \mu_{n,m} = \delta_{n,m} \) for \( n\le m \). For \( n>m \), let
  \( \lambda_{n,m} = \frac{\gamma}{1+m\gamma} \). Then the result can
  be obtained by checking that \( \lambda_{n,m} \) satisfies the same
  recurrence relation with the same initial
  condition.
\end{proof}
 
\begin{prop}\label{pro:simp_nrs}
  For the OPUC that correspond to the single inserted mass point
  measure, if \( r > 0 \), then we have
  \[
    \mu_{n,r,s} =
    \begin{cases}
      \delta_{n+r,s} & \mbox{if \( s \ge n+r \)},\\
      - \frac{n\gamma^2}{(1+(r-1)\gamma)(1+s\gamma)} & \mbox{if \( n-1 < s < n+r \),} \\
      \frac{(1-\gamma)\gamma}{(1+(r-1)\gamma)(1+s\gamma)}  & \mbox{if \( 0 \leq s \le n-1 \).}
    \end{cases}
  \]
\end{prop}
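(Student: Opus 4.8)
The plan is to reduce $\mu_{n,r,s}$ to a linear combination of the quantities $\mu_{k,s}$ already computed in \Cref{pro:simp_nm}, via the identity \eqref{eq:nrs_from_nm} of \Cref{rmk:nm_to_nrs}, so the first task is to pin down the coefficients of $\Phi_r(z)$. I claim that for $r\ge 1$,
\[
  \Phi_r(z) = z^r - \frac{\gamma}{1+(r-1)\gamma}\bigl(1 + z + \cdots + z^{r-1}\bigr),
\]
i.e. $p_{r,r}=1$ and $p_{r,i} = -\gamma/(1+(r-1)\gamma)$ for $0\le i\le r-1$; since $\gamma$ is real these coefficients equal their conjugates. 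I would prove this by induction on $r$ using Szeg\H{o}'s recurrence \eqref{eq:rec1}: from the claimed form of $\Phi_r$ one reads off $\Phi_r^*(z) = 1 - \frac{\gamma}{1+(r-1)\gamma}(z + \cdots + z^r)$, substitutes both into $\Phi_{r+1} = z\Phi_r - \overline{\alpha_r}\,\Phi_r^*$ with $\alpha_r = \gamma/(1+r\gamma)$, and checks that the coefficient of each $z^i$ with $0\le i\le r$ collapses to $-\gamma/(1+r\gamma)$, the crucial identity being $-\gamma(1+r\gamma)+\gamma^2 = -\gamma\bigl(1+(r-1)\gamma\bigr)$.

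Granting this, \eqref{eq:nrs_from_nm} yields, for $r\ge 1$,
\[
  \mu_{n,r,s} = \mu_{n+r,s} - \frac{\gamma}{1+(r-1)\gamma}\sum_{i=0}^{r-1}\mu_{n+i,s}.
\]
Now I substitute the formula of \Cref{pro:simp_nm}, namely $\mu_{k,s}=\delta_{k,s}$ for $k\le s$ and $\mu_{k,s}=\gamma/(1+s\gamma)$ for $k>s$, and split according to where $s$ sits among $n,n+1,\dots,n+r$. If $s\ge n+r$, then $n+i\le s$ for every $i$, all the deltas $\delta_{n+i,s}$ with $i\le r-1$ vanish, and $\mu_{n+r,s}=\delta_{n+r,s}$, giving $\mu_{n,r,s}=\delta_{n+r,s}$. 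If $n\le s\le n+r-1$ (the range $n-1<s<n+r$), exactly one term of $\sum_{i=0}^{r-1}\delta_{n+i,s}$ survives, the remaining $n+r-1-s$ terms of the sum each contribute $\gamma/(1+s\gamma)$, and $\mu_{n+r,s}=\gamma/(1+s\gamma)$; putting everything over the common denominator $(1+(r-1)\gamma)(1+s\gamma)$, the numerator telescopes to $-n\gamma^2$. Finally, if $0\le s\le n-1$, then $n+i>s$ for all $i$, so $\sum_{i=0}^{r-1}\mu_{n+i,s}=r\gamma/(1+s\gamma)$ and $\mu_{n+r,s}=\gamma/(1+s\gamma)$; factoring out $\gamma/(1+s\gamma)$ leaves $1-r\gamma/(1+(r-1)\gamma) = (1-\gamma)/(1+(r-1)\gamma)$. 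These are precisely the three values in the statement, and $\gamma\in(0,1)$ guarantees that all the denominators are positive.

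The computation in each case is short; the two places that need care are the inductive verification of the closed form for $\Phi_r$ (one must not drop the $\Phi_r^*$ term, and must carry the algebraic identity above), and, in the middle case, counting correctly how many indices $n+i$ lie strictly above $s$ and accounting for the surviving Kronecker delta exactly once. I expect that middle case to be the only real obstacle, and only in the bookkeeping sense: an off-by-one in the range of summation is the most likely source of error, but there is no conceptual difficulty once the two ingredients are in hand.
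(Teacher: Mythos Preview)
Your proposal is correct and follows essentially the same route as the paper: the paper cites the closed form $\Phi_n(z)=z^n-\alpha_{n-1}(z^{n-1}+\cdots+1)$ from \cite[(1.6.6)]{Simon2005a} (which is exactly your formula, since $\alpha_{r-1}=\gamma/(1+(r-1)\gamma)$), plugs it into \eqref{eq:nrs_from_nm}, and then splits into the same three cases using \Cref{pro:simp_nm}. The only cosmetic difference is that you reprove the formula for $\Phi_r$ by induction instead of citing it.
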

\begin{proof}
  Suppose \( r > 0 \). It is known \cite[(1.6.6)]{Simon2005a} that
  \( \Phi_n(z) = z^n - \alpha_{n-1}(z^{n-1} + \dots z + 1) \) for
  \( n\ge1 \). By \eqref{eq:nrs_from_nm}, we have
  \begin{align*}
    \mu_{n,r,s} =  \mu_{n+r,s} - \frac{\gamma}{1+(r-1)\gamma} \sum_{i=0}^{r-1} \mu_{n+i,s}.
  \end{align*}
  Using this equation and \Cref{pro:simp_nm}, we can compute
  \( \mu_{n,r,s} \) for each case:
  \begin{itemize}
  \item If \( s\ge n+r \), then the summation on the right-hand side
    vanishes and hence \( \mu_{n,r,s} = \delta_{n+r,s} \).
  \item  If \( n-1 < s < n+r \), then
    \[
      \mu_{n,r,s}= \frac{\gamma}{1+s\gamma}-  \frac{\gamma}{1+(r-1)\gamma}\left( 1+ \frac{(n+r-s-1)\gamma}{1+s\gamma} \right) = - \frac{n\gamma^2}{(1+(r-1)\gamma)(1+s\gamma)}.
    \]
  \item 
    If \( 0 \leq s \le n-1 \), then 
    \[
      \mu_{n,r,s}= \frac{\gamma}{1+s\gamma}-  \frac{\gamma}{1+(r-1)\gamma} \frac{r\gamma}{1+s\gamma} =  \frac{(1-\gamma)\gamma}{(1+(r-1)\gamma)(1+s\gamma)}. \qedhere
    \]
  \end{itemize}
\end{proof}

\subsection{Circular Jacobi polynomials}
For \( a>-1 \), the circular Jacobi polynomials are defined in
\cite[Example~8.2.5]{Ismail2009} by the orthogonality with respect to
the weight functiondm
\[
  w(\theta) = \frac{\Gamma^2(a+1)}{2\pi \Gamma(2a+1)} |1-e^{i\theta}|^{2a},
\]
and are also defined by Verblunsky coefficients 
\[
  \alpha_n = - \frac{a}{n+a+1}, \quad n \geq 0.
\]

\begin{prop}\label{prop:Jacobi_nm}
  For circular Jacobi polynomials, we have
  \[
    \mu_{n,m} = (-1)^{n-m} \binom{n}{m} \frac{(a)_{n-m}}{(a+n)_{n-m}},
  \]
  where \( (a)_k \) is a Pochhammer symbol given by
  \( (a)_k = a(a+1)\cdots (a+k-1) \) for \( k \geq 1 \), and
  \( (a)_0 = 1 \).
\end{prop}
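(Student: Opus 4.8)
The plan is to reduce the claim to the case \( m = 0 \) (the ordinary moments) and then bootstrap in \( m \). The case \( a = 0 \) is immediate: then \( \alpha_n = 0 \), so \( \Phi_n(z) = z^n \), and \( \mu_{n,m} = \langle z^m, z^n\rangle/\langle z^m,z^m\rangle = \mu_{n-m} = \delta_{n,m} \), which agrees with the right-hand side. So assume \( a \neq 0 \); then \( \alpha_n \neq 0 \) for all \( n \), and \Cref{thm:Sch} is available. First I would record the explicit form of \( \Phi_m(z) \): a short induction on \( m \) using Szeg\H{o}'s recurrences \eqref{eq:rec1}--\eqref{eq:rec2} (carrying \( \Phi_m \) and its reverse \( \Phi_m^* \) along together) gives each coefficient \( [z^i]\Phi_m(z) \) in explicit binomial-times-Pochhammer-ratio form. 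Since \( \langle\Phi_m,\Phi_m\rangle = \prod_{j=0}^{m-1}(1-|\alpha_j|^2) \), the definition of \( \mu_{n,m} \) unfolds into the bridge identity
\[
  \mu_{n,m} \;=\; \frac{\langle\Phi_m(z),\,z^n\rangle}{\langle\Phi_m(z),\Phi_m(z)\rangle} \;=\; \frac{\sum_{i=0}^{m}\bigl([z^i]\Phi_m(z)\bigr)\,\mu_{n-i}}{\prod_{j=0}^{m-1}(1-|\alpha_j|^2)},
\]
which is the analogue for the second index of \eqref{eq:nrs_from_nm} in \Cref{rmk:nm_to_nrs}; it expresses everything through the ordinary moments \( \mu_k = \mu_{k,0} \).

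Next I would settle the \( m = 0 \) case, i.e., the formula for \( \mu_n = \mu_{n,0} \). One route is to verify that the proposed closed form satisfies the recursion coming from \( \cL(\overline{\Phi_n}(1/z)) = 0 \), namely \( \mu_n = -\sum_{i=0}^{n-1}\overline{[z^i]\Phi_n(z)}\,\mu_{n-i} \), using the explicit coefficients of \( \Phi_n \) obtained above; this reduces to a terminating Chu--Vandermonde sum in the variable \( a \). Alternatively one can run the generating-function argument of \Cref{pro:Geronimus_nrs} with the Schr\"oder-path model of \Cref{thm:Sch} for \( \mu_{n,0} \); the difference from the Geronimus case is that here the step weights \( -\overline{\alpha_{b-1}}/\overline{\alpha_b} = -(a+b)/(a+b+1) \) and \( (\overline{\alpha_{b-2}}/\overline{\alpha_{b-1}})(1-|\alpha_{b-1}|^2) \) genuinely depend on the height \( b \), so one must solve a height-indexed family of functional equations rather than the two equations used there.

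With the ordinary moments in hand, the proof reduces to a single hypergeometric identity: substituting the explicit forms of \( [z^i]\Phi_m(z) \), of \( \mu_{n-i} \), and of \( \prod_{j=0}^{m-1}(1-|\alpha_j|^2) = m!\,(2a+1)_m/((a+1)_m)^2 \) into the bridge identity and comparing with \( (-1)^{n-m}\binom{n}{m}(a)_{n-m}/(a+n)_{n-m} \) turns the claim, after clearing common factors, into a balanced \( {}_3F_2 \) (which collapses to a \( {}_2F_1 \)) evaluated at \( 1 \), to be summed by the Pfaff--Saalsch\"utz theorem. This last step is the main obstacle: one must check that the parameters are indeed balanced, that the evaluation lands exactly on the stated right-hand side, and that the degenerate ranges \( n \le m \) (where \( \mu_{n,m} = \delta_{n,m} \) must be reproduced) are accounted for. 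An identity-free but longer alternative is to induct on \( m \) via the Schr\"oder-path recurrence from the proof of \Cref{thm:Sch}, \( \mu_{n,s+1} = (1-|\alpha_s|^2)^{-1}\bigl(\mu_{n-1,s} + \tfrac{\overline{\alpha_s}}{\overline{\alpha_{s-1}}}\mu_{n,s} - \tfrac{\overline{\alpha_s}}{\overline{\alpha_{s-1}}}\mu_{n-1,s-1}\bigr) \); inserting the circular Jacobi coefficients turns each inductive step into a rational-function identity in \( a \), verifiable directly, but it still needs the \( m = 0 \) base case above.
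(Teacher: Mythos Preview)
Your proposal would work, but it takes a considerably longer route than the paper. The paper's proof is essentially your closing ``alternative'': it checks that the closed form $\lambda_{n,m}=(-1)^{n-m}\binom{n}{m}(a)_{n-m}/(a+n)_{n-m}$ satisfies the Schr\"oder recurrence from \Cref{thm:Sch}. The crucial difference is the choice of boundary data. You orient the recurrence as a forward induction in $m$, which forces you to first compute $\mu_{n,0}=\mu_n$ by a separate argument (Chu--Vandermonde or a height-dependent generating-function computation). The paper instead rewrites the very same relation as
\[
  \mu_{n,m}=\mu_{n-1,m-1}-\frac{\overline{\alpha_{m-1}}}{\overline{\alpha_m}}\,\mu_{n-1,m}+\frac{\overline{\alpha_{m-1}}}{\overline{\alpha_m}}(1-|\alpha_m|^2)\,\mu_{n,m+1},
\]
and uses the \emph{trivial} boundary $\mu_{n,m}=\delta_{n,m}$ for $n\le m$ together with $\mu_{n,-1}=0$. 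These follow immediately from orthogonality, need no calculation, and determine $\mu_{n,m}$ uniquely by induction on $n$ and downward induction on $m$ from $m=n$. The whole proof then collapses to a single rational-function identity in $a$; the value of $\mu_n$ drops out as the special case $m=0$ rather than being an input. Your primary route through the explicit $\Phi_m$, the bridge identity, and Pfaff--Saalsch\"utz would also succeed, but it imports machinery the paper never needs. (One small slip: the relation coming from $\cL(\overline{\Phi_n}(1/z))=0$ is $\mu_n=-\sum_{i=0}^{n-1}\overline{[z^i]\Phi_n(z)}\,\mu_i$, not $\mu_{n-i}$.)
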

\begin{proof}
  We have the recurrence relation for \( \mu_{n,m} \) from the proof
  of \Cref{thm:Sch} by letting \( r=0 \). Let \( \lambda_{n,m} \) be
  the right-hand side of the equation. It is enough to show that
  \( \lambda_{n,m} \) has the same recurrence relation, that is,
  \[
    \lambda_{n,m} = \lambda_{n-1,m-1} - \frac{a+m+1}{a+m} \lambda_{n-1,m} + \frac{a+m+1}{a+m}\left( 1- \left( \frac{a}{a+m+1} \right)^2  \right)\lambda_{n,m+1}.
  \]
  The initial conditions are the same:
  \[
    \mu_{n,m} = \lambda_{n,m} = \delta_{n,m} \mbox{ for } n\le m \qand \mu_{n,-1} = \lambda_{n,-1} = 0 \quad\mbox{for all \( n\geq 0 \).}
  \]
  The proof follows from
  \begin{align*}
    \lambda_{n-1,m-1} &= \frac{m(a+n)}{n(a+m)} \lambda_{n,m}, \\
    \frac{a+m+1}{a+m} \lambda_{n-1,m} &= \frac{a+m+1}{a+m}\frac{(n-m)(a+n)}{n(a-n+m+1)} \lambda_{n,m}, \\
    \frac{(m+1)(2a+m+1)}{(a+m)(a+m+1)} \lambda_{n,m+1} &=\frac{(m+1)(2a+m+1)}{(a+m)(a+m+1)}  \frac{(n-m)(a+m+1)}{(m+1)(a-n+m+1)} \lambda_{n,m} \\
    &= \frac{(2a+m+1)(n-m)}{(a+m)(a-n+m+1)} \lambda_{n,m},
  \end{align*}
  with a direct computation
  \[
    \frac{m(a+n)}{n(a+m)} + \frac{(a+m+1)(n-m)(a+n)}{(a+m)n(a-n+m+1)} + \frac{(2a+m+1)(n-m)}{(a+m)(a-n+m+1)} = 1.\qedhere
  \]
\end{proof}

\begin{prop}\label{pro:Jacobi_nrs}
  For circular Jacobi polynomials, we have
  \[
    \mu_{n,r,s} = \sum_{i=0}^r(-1)^{n+i-s}\binom{n+i}{s}\binom{r}{i}\frac{(a)_{i+1}(-a)_{n+i-s}}{(a+r-i)_{i+1}(-a-n-i)_{n+i-s}}.
  \]
\end{prop}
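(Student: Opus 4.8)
The plan is to reduce the claim to the formula for $\mu_{n,m}$ already obtained in \Cref{prop:Jacobi_nm}, via the expansion \eqref{eq:nrs_from_nm} from \Cref{rmk:nm_to_nrs}. Write $\Phi_r(z)=\sum_{i=0}^{r}p_{r,i}z^i$. Since $a>-1$ is real, every Verblunsky coefficient $\alpha_n=-a/(n+a+1)$ is real, so \eqref{eq:rec1} shows by induction that each $p_{r,i}$ is real; in particular $\overline{p_{r,i}}=p_{r,i}$, and \eqref{eq:nrs_from_nm} becomes
\[
  \mu_{n,r,s}=\sum_{i=0}^{r}p_{r,i}\,\mu_{n+i,s}.
\]
Thus it suffices to determine $p_{r,i}$ in closed form and then substitute \Cref{prop:Jacobi_nm}.

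For the coefficients I would prove, by induction on $r$, that
\[
  p_{r,i}=\binom{r}{i}\frac{(a)_{i+1}}{(a+r-i)_{i+1}}=\binom{r}{i}\frac{(a)_{r-i}}{(a+i+1)_{r-i}}\qquad (0\le i\le r),
\]
the two expressions agreeing because the common run of factors $(a+i+1)(a+i+2)\cdots(a+r-i-1)$ cancels from numerator and denominator; note that both reduce to $1$ when $i=r$. The base case $r=0$ is $\Phi_0(z)=1$. For the inductive step, realness of the coefficients gives $\Phi_r^*(z)=\sum_{i}p_{r,i}z^{\,r-i}$, so reading off the coefficient of $z^i$ in \eqref{eq:rec1} yields
\[
  p_{r+1,i}=p_{r,i-1}-\overline{\alpha_r}\,p_{r,r-i}=p_{r,i-1}+\frac{a}{a+r+1}\,p_{r,r-i}.
\]
Plugging the inductive formula into the right-hand side and clearing the common rising factorials turns this into a routine identity which, after normalisation, is the weighted Pascal relation $\binom{r}{i-1}+\binom{r}{i}=\binom{r+1}{i}$; I would verify it by a direct computation.

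With $p_{r,i}$ in hand, substituting it together with the expression for $\mu_{n+i,s}$ from \Cref{prop:Jacobi_nm} into $\mu_{n,r,s}=\sum_{i=0}^{r}p_{r,i}\mu_{n+i,s}$ produces the asserted formula: the binomials $\binom{r}{i}$ and $\binom{n+i}{s}$, the sign $(-1)^{n+i-s}$, and the two Pochhammer ratios $\tfrac{(a)_{i+1}}{(a+r-i)_{i+1}}$ and $\tfrac{(-a)_{n+i-s}}{(-a-n-i)_{n+i-s}}$ all match term by term. I expect the one genuinely fiddly point to be the rising-factorial bookkeeping in the inductive verification of $p_{r,i}$. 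An alternative route that avoids the coefficients entirely — checking the proposed double sum directly against a recurrence for $\mu_{n,r,s}$ obtained from \eqref{eq:rec1} and \eqref{eq:rec2}, in the spirit of the proof of \Cref{thm:Luka} — is available, but it leads to a more cumbersome identity, so I would prefer the coefficient approach.
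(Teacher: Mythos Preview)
Your proposal is correct and follows essentially the same route as the paper: both proofs use \eqref{eq:nrs_from_nm} to write \(\mu_{n,r,s}=\sum_{i=0}^{r}\overline{p_{r,i}}\,\mu_{n+i,s}\), identify the coefficients \(p_{r,i}=\binom{r}{i}\frac{(a)_{i+1}}{(a+r-i)_{i+1}}\) (real), and then substitute \Cref{prop:Jacobi_nm}. The only difference is that the paper obtains \(p_{r,i}\) by citing the known hypergeometric expression for \(\Phi_n(z)\) from \cite[(8.2.22)]{Ismail2009}, while you derive it self-containedly by induction from \eqref{eq:rec1}; your inductive approach is sound and leads to the same formula.
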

\begin{proof}
  It is known \cite[(8.2.22)]{Ismail2009} that
  \begin{align*}
    \phi_n(z)&=\frac{(a)_n}{\sqrt{n!(2a+1)_n}} {}_2F_1(-n,a+1;-n+1-a;z)  \\
             &= \frac{(a)_n}{\sqrt{n!(2a+1)_n}}\sum_{i=0}^n \frac{(-n)_i(a+1)_i}{(-n+1-a)_i}\frac{z^i}{i!}. \qquad (\because (-n)_i=0 \text{ for } i>n.)
  \end{align*}
  Thus, we have
  \begin{align*}
    \Phi_n(z) &= \frac{a}{a+n}\sum_{i=0}^n \frac{(-n)_i(a+1)_i}{(-n+1-a)_i}\frac{z^i}{i!}  \\
              &= \sum_{i=0}^n \frac{(-1)^i(n)(n-1)\cdots(n-i+1)}{(a+n)(-1)^i(a+n-1)(a+n-2)\cdots (a+n-i)}\frac{a(a+1)\cdots(a+i)}{i!}z^i  \\
              &= \sum_{i=0}^n\binom{n}{i}\frac{(a)_{i+1}}{(a+n-i)_{i+1}}z^i.
  \end{align*}
  Using~\eqref{eq:nrs_from_nm}, we obtain
  \begin{align*}
    \mu_{n, r, s}   & =\sum_{i=0}^r\binom{r}{i} \frac{(a)_{i+1}}{(a+r-i)_{i+1}} \mu_{n+i, s} \\
                  & =\sum_{i=0}^r\binom{r}{i} \frac{(a)_{i+1}}{(a+r-i)_{i+1}} (-1)^{n+i-s}\binom{n+i}{s} \frac{(-a)_{n+i-s}}{(-a-n-i)_{n+i-s}} \\
                  & =\sum_{i=0}^r(-1)^{n+i-s}\binom{n+i}{s}\binom{r}{i} \frac{(a)_{i+1}(-a)_{n+i-s}}{(a+r-i)_{i+1}(-a-n-i)_{n+i-s}}.\qedhere
  \end{align*}
\end{proof}

\subsection{Single nontrivial moment}
For the OPUC \cite[Example~1.6.4]{Simon2005a} that correspond to the
single nontrivial moment measure
\( d\mu_a(\theta) = [1- a\cos(\theta)]\frac{d\theta}{2\pi} \) where
\( 0 < a \le 1 \), the Verblunsky coefficients depend on \( a \). For
\( a=1 \), such OPUC are defined by the Verblunsky coefficients
\[
  \alpha_n = - \frac{1}{n+2}, \quad n \geq 0,
\]
which coincide with the Verblunsky coefficients of circular Jacobi
polynomials for \( a = 1 \). By plugging \( a=1 \)
into~\Cref{prop:Jacobi_nm} and~\Cref{pro:Jacobi_nrs}, we obtain the
following proposition.
\begin{prop}\label{pro:SNM_nm_a1}
  For the OPUC that correspond to the single nontrivial moment measure
  with \( a = 1 \), we have
  \( \Phi_r(z)=\sum_{i=0}^r \frac{i+1}{r+1} z^i \). Furthermore,
  \[
    \mu_{n,m} =
    \begin{cases}
      1 & \mbox{if \( m=n \)},\\
      - \frac{n}{n+1} & \mbox{if \( m=n-1 \)},\\
      0 & \mbox{otherwise,}
    \end{cases} \qand
    \mu_{n,r,s} =
    \begin{cases}
      1 & \mbox{if \( s=n+r \)},\\
      -\frac{n}{(r+1)(s+2)} & \mbox{if \( n-1\le s < n+r \)},\\
      0 & \mbox{otherwise.}
    \end{cases}
  \]
\end{prop}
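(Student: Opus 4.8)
The plan is to notice that the Verblunsky coefficients $\alpha_n=-1/(n+2)$ are exactly the circular Jacobi coefficients $\alpha_n=-a/(n+a+1)$ evaluated at $a=1$, so that the whole statement follows by specializing the circular Jacobi computations of \Cref{prop:Jacobi_nm} and \Cref{pro:Jacobi_nrs} to $a=1$ (by the uniqueness in Verblunsky's theorem the two families share the same $\Phi_n$ and the same generalized moments). First I would record the polynomials: the proof of \Cref{pro:Jacobi_nrs} gives $\Phi_n(z)=\sum_{i=0}^n\binom{n}{i}\frac{(a)_{i+1}}{(a+n-i)_{i+1}}z^i$, and at $a=1$ one has $(1)_{i+1}=(i+1)!$ and $(n-i+1)_{i+1}=(n+1)!/(n-i)!$, so the coefficient of $z^i$ collapses to $\binom{n}{i}\frac{(i+1)!(n-i)!}{(n+1)!}=\frac{i+1}{n+1}$, which is the asserted formula for $\Phi_r(z)$.

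For $\mu_{n,m}$ I would take $r=0$ in \Cref{pro:Jacobi_nrs}, which yields $\mu_{n,m}=(-1)^{n-m}\binom{n}{m}\frac{(-a)_{n-m}}{(-a-n)_{n-m}}$ (the specialization of the $\mu_{n,m}$ formula of \Cref{prop:Jacobi_nm} to this family). Setting $a=1$, the crucial point is that the rising factorial $(-1)_k$ equals $1$ for $k=0$, $-1$ for $k=1$, and $0$ for every $k\ge 2$; hence all terms with $m\le n-2$ vanish, while the $m=n$ term is $1$ and the $m=n-1$ term reduces, after cancelling the remaining Pochhammer symbols, to $-n/(n+1)$. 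This is precisely the claimed piecewise formula for $\mu_{n,m}$.

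For $\mu_{n,r,s}$ I would substitute $a=1$ into the sum of \Cref{pro:Jacobi_nrs}, $\mu_{n,r,s}=\sum_{i=0}^r(-1)^{n+i-s}\binom{n+i}{s}\binom{r}{i}\frac{(a)_{i+1}(-a)_{n+i-s}}{(a+r-i)_{i+1}(-a-n-i)_{n+i-s}}$. At $a=1$ the factor $(-1)_{n+i-s}$ forces $n+i-s\in\{0,1\}$ and $\binom{n+i}{s}$ forces $s\le n+i$, so only the indices $i=s-n$ and $i=s-n+1$, whenever they lie in $\{0,1,\dots,r\}$, can contribute. Using $(1)_{i+1}=(i+1)!$ together with the telescoping of each $(1+r-i)_{i+1}$ into a ratio of factorials and $(-1-n-i)_1=-(s+2)$ for the $i=s-n+1$ term, the $i=s-n$ term collapses to $(s-n+1)/(r+1)$ and the $i=s-n+1$ term to $-(s+1)(s-n+2)/\big((r+1)(s+2)\big)$. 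Reading off the admissible ranges then produces the three cases: for $s=n+r$ only $i=s-n=r$ is in range, giving $1$; for $n-1\le s<n+r$ both indices are in range and their sum has numerator $(s-n+1)(s+2)-(s+1)(s-n+2)=-n$, hence $\mu_{n,r,s}=-n/\big((r+1)(s+2)\big)$; and for $s>n+r$ or $s<n-1$ neither index lies in $\{0,\dots,r\}$ (or the binomial vanishes), so $\mu_{n,r,s}=0$. Putting $r=0$ recovers the $\mu_{n,m}$ statement as a consistency check.

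All of the Pochhammer manipulations are routine; the one step that demands care is the case analysis on which of $i=s-n$ and $i=s-n+1$ actually lies in the summation range, since it is exactly this bookkeeping that produces the boundary value $-n/((r+1)(n+1))$ at $s=n-1$, the value $1$ at $s=n+r$, and the vanishing outside $n-1\le s\le n+r$. The only other place where a sign or off-by-one slip could occur is the collapse $(s-n+1)(s+2)-(s+1)(s-n+2)=-n$, where the $s^2$, $3s$, $ns$ and constant terms all cancel, leaving $-2n+n=-n$.
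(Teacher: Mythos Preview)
Your proposal is correct and follows exactly the paper's approach: the paper simply remarks that the Verblunsky coefficients $\alpha_n=-1/(n+2)$ coincide with the circular Jacobi coefficients at $a=1$ and then states that plugging $a=1$ into \Cref{prop:Jacobi_nm} and \Cref{pro:Jacobi_nrs} yields the proposition. Your write-up carries out that specialization in detail, and the Pochhammer simplifications and the case analysis on which of $i=s-n$ and $i=s-n+1$ lie in $\{0,\dots,r\}$ are all correct (in particular the boundary case $s=n-1$, where only $i=0$ contributes, still produces $-n/((r+1)(n+1))$ as claimed).
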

When \( 0 < a < 1 \), the Verblunsky coefficients are given by
\[
  \alpha_n = -\frac{u-u^{-1}}{u^{n+2} - u^{-n-2}}, \quad\mbox{where} \quad u = a^{-1} + \sqrt{a^{-2}-1}.
\]
\begin{prop}\label{pro:SNM_nm_a<1}
  For the OPUC that correspond to the single nontrivial moment measure with \( 0 < a < 1 \), we have 
  \[
    \mu_{n,m} =
    \begin{cases}
      1 & \mbox{if \( m=n \)},\\
      - \frac{u^{n}-u^{-n}}{u^{n+1}-u^{-(n+1)}} & \mbox{if \( m=n-1 \)},\\
      0 & \mbox{otherwise.}
    \end{cases}
  \]
\end{prop}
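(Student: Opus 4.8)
The plan is to determine the polynomials $\Phi_n(z)$ explicitly and then reduce the computation of $\mu_{n,m}$ to the (very short) expansion of $z^n$ in the orthogonal basis. Write $[k] := u^k - u^{-k}$ for brevity; since $0<a<1$ we have $u>1$, and the Verblunsky coefficients take the form $\alpha_n = -[1]/[n+2]$, which are real. The first step is to prove, by induction on $n$ using Szeg\H{o}'s recurrence \eqref{eq:rec1}, that
\[
  \Phi_n(z) = \frac{1}{[n+1]}\sum_{i=0}^{n} [i+1]\,z^i , \qquad\text{so that}\qquad \Phi_n^*(z) = \frac{1}{[n+1]}\sum_{i=0}^{n} [n+1-i]\,z^i .
\]

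The case $n=0$ is immediate, and for the inductive step one expands $\Phi_{n+1}(z) = z\Phi_n(z) - \overline{\alpha_n}\Phi_n^*(z)$ and matches the coefficient of $z^j$ on both sides; after clearing denominators this amounts to the identity $[j+1][n+1] - [j][n+2] = [1][n+1-j]$, which follows from $[a][b] = (u^{a+b}+u^{-a-b}) - (u^{a-b}+u^{-a+b})$ by a short computation. As a consistency check, letting $a\to 1$ (i.e.\ $u\to 1$) recovers $\Phi_n(z)=\sum_{i=0}^n\frac{i+1}{n+1}z^i$ from \Cref{pro:SNM_nm_a1}.

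Given the formula for $\Phi_n$, the coefficient of $z^i$ in $\Phi_n(z)$ for $0\le i\le n-1$ equals $\tfrac{[n]}{[n+1]}$ times the coefficient of $z^i$ in $\Phi_{n-1}(z)$, while the leading coefficients match, so
\[
  z^n = \Phi_n(z) - \frac{[n]}{[n+1]}\,\Phi_{n-1}(z).
\]
Applying $\langle \Phi_m(z),\,\cdot\,\rangle$ to this identity, and using that the coefficients are real together with the orthogonality $\langle\Phi_m,\Phi_k\rangle = \kappa_k\delta_{m,k}$ where $\kappa_m = \langle\Phi_m,\Phi_m\rangle = \prod_{j=0}^{m-1}(1-|\alpha_j|^2)>0$, we get $\langle \Phi_m(z),z^n\rangle = \kappa_n\delta_{m,n} - \tfrac{[n]}{[n+1]}\kappa_{n-1}\delta_{m,n-1}$. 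Dividing by $\kappa_m$ gives $\mu_{n,m}=1$ when $m=n$, $\mu_{n,m} = -[n]/[n+1]$ when $m=n-1$, and $\mu_{n,m}=0$ otherwise; note that the explicit value of $\kappa_m$ cancels and is never needed (although one can compute $\kappa_m = [1][m+2]/([2][m+1])$ by telescoping, using $1-|\alpha_j|^2 = [j+1][j+3]/[j+2]^2$). Since $-[n]/[n+1] = -\tfrac{u^n-u^{-n}}{u^{n+1}-u^{-(n+1)}}$, this is exactly the asserted value.

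The only genuinely computational part is the inductive verification of the closed form for $\Phi_n(z)$; everything afterwards is bookkeeping with the ``quantum integers'' $[k]$. An alternative route, bypassing $\Phi_n$, is to take the recurrence for $\mu_{n,m}$ from the proof of \Cref{thm:Sch} with $r=0$ and check that the claimed formula satisfies it, with base data $\mu_{n,0}=\mu_n$ (where $\mu_0=1$, $\mu_1=\alpha_0$, and $\mu_n=0$ for $n\ge 2$ --- the defining property of the single nontrivial moment measure) and $\mu_{n,-1}=0$; there the relevant identities are $[n]+[n+2]=(u+u^{-1})[n+1]$ and $[n+1]^2-[1]^2=[n][n+2]$. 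I would follow the first approach, as it parallels the treatment of the circular Jacobi polynomials in \Cref{prop:Jacobi_nm}.
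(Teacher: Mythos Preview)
Your argument is correct and takes a genuinely different route from the paper. The paper proves \Cref{pro:SNM_nm_a<1} exactly by the ``alternative route'' you sketch at the end: it sets $\lambda_{n,m}$ equal to the claimed expression and verifies, case by case in $m\in\{n,n-1,n-2\}\cup\{\text{else}\}$, that $\lambda_{n,m}$ satisfies the Schr\"oder-path recurrence from the proof of \Cref{thm:Sch} (with $r=0$) together with the obvious initial conditions. Your primary approach instead first establishes the closed form $\Phi_n(z)=\frac{1}{[n+1]}\sum_{i=0}^n[i+1]z^i$ by induction on Szeg\H{o}'s recurrence, and then reads off the two-term expansion $z^n=\Phi_n(z)-\frac{[n]}{[n+1]}\Phi_{n-1}(z)$, from which orthogonality immediately gives the three cases with no case analysis. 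This is more transparent (it explains structurally why $\mu_{n,m}$ vanishes for $m\notin\{n-1,n\}$) and it front-loads the formula for $\Phi_n$, which the paper in any case needs and cites from \cite{Simon2005a} in the very next proposition; the paper's approach, by contrast, stays entirely within the lattice-path recurrences and does not require knowing $\Phi_n$ at this stage. One small inaccuracy: your remark that the first approach ``parallels the treatment of the circular Jacobi polynomials in \Cref{prop:Jacobi_nm}'' is off --- the paper proves \Cref{prop:Jacobi_nm} by the recurrence-verification method too, not via an explicit $\Phi_n$.
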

\begin{proof}
  Let \( \lambda_{n,m} \) be the right-hand side of the equation in
  the proposition and let
  \( f_n = -\lambda_{n,n-1}= \frac{u^{n}-u^{-n}}{u^{n+1}-u^{-(n+1)}}
  \). It suffices to show that
  \begin{equation}\label{eq:snm}
    \lambda_{n,m} = \lambda_{n-1,m-1} - \frac{1}{f_{m+1}} \lambda_{n-1,m} +  \frac{1}{f_{m+1}}  \left( 1-\left( \frac{u-u^{-1}}{u^{m+2}-u^{-(m+2)}} \right)^2 \right) \lambda_{n,m+1},
  \end{equation}
  where the initial conditions are \( \lambda_{n,m} = \delta_{n,m} \)
  for \( n\le m \) and \( \lambda_{n,-1} = 0 \) for \( n\ge0 \), which
  is obvious. If \( m=n \), then~\eqref{eq:snm} immediately follows
  from the initial conditions. If \( m = n-1 \), then using the fact
  that
  \( \frac{1}{f_{m+1}} \left( 1-\left(
      \frac{u-u^{-1}}{u^{m+2}-u^{-(m+2)}} \right)^2 \right) =
  \frac{1}{f_{m+2}} \), we obtain
  \begin{align*}
    \lambda_{n-1,n-2} - \frac{1}{f_{n}} \lambda_{n-1,n-1} + \frac{1}{f_{n+1}} \lambda_{n,n} = -f_{n-1} - \frac{1}{f_n} + \frac{1}{f_{n+1}} = -f_{n} = \lambda_{n,n-1}.
  \end{align*}
  If \( m = n-2 \), then
  \begin{align*}
    \lambda_{n-1,n-3} -   \frac{1}{f_{n-1}}\lambda_{n-1,n-2} +  \frac{1}{f_{n}}\lambda_{n,n-1} = 0 -  \frac{1}{f_{n-1}}(-f_{n-1}) + \frac{1}{f_{n}} (-f_n) = 0 = \lambda_{n,n-2}.
  \end{align*}
  Finally, if \( m < n-2 \) or \( m > n \), then \eqref{eq:snm} also
  holds since both sides are equal to zero. This completes the proof.
\end{proof}

\begin{prop}\label{pro:SNM_nrs_a<1}
  For the OPUC that correspond to the single nontrivial moment measure
  with \( 0 < a < 1 \), we have
  \[
    \mu_{n,r,s} =
    \begin{cases}
      1 & \mbox{if \( s=n+r \)},\\
      - \frac{(u^{n}-u^{-n})(u-u^{-1})}{(u^{s+2}-u^{-(s+2)})(u^{r+1}-u^{-(r+1)})} & \mbox{if \( n-1 \le s < n+r \)},\\
      0 & \mbox{otherwise.}
    \end{cases}
  \]
\end{prop}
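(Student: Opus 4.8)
The plan is to mimic the strategy already used for the $a=1$ case in \Cref{pro:SNM_nm_a1}, namely to combine the formula \eqref{eq:nrs_from_nm},
\[
  \mu_{n,r,s} = \sum_{i=0}^r \overline{p_{r,i}}\,\mu_{n+i,s},
\]
with the explicit values of $\mu_{n,m}$ from \Cref{pro:SNM_nm_a<1}. First I would record the coefficients $p_{r,i}$ of $\Phi_r(z)$. Since $\alpha_n = -\frac{u-u^{-1}}{u^{n+2}-u^{-n-2}}$ is real, $\overline{p_{r,i}}=p_{r,i}$, so only the real case needs attention; and from $\Phi_r(z)=z^r-\overline{\alpha_{r-1}}\Phi_{r-1}^*(z)$ together with the known shape of these polynomials (see \cite[(1.6.6)]{Simon2005a} and the computation in \Cref{pro:simp_nrs}), one expects $\Phi_r(z) = z^r - \alpha_{r-1}(z^{r-1}+\cdots+z+1)$, i.e. $p_{r,r}=1$ and $p_{r,i} = -\alpha_{r-1} = \frac{u-u^{-1}}{u^{r+1}-u^{-(r+1)}}$ for $0\le i\le r-1$. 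I would verify this by checking the recurrence $p_{r,i}=p_{r-1,i-1}-\overline{\alpha_{r-1}}\cdot[\text{coeff.\ of }\Phi_{r-1}^*]$, which reduces to a one-line identity because $\Phi_{r-1}^*$ has all lower-degree coefficients equal to $-\overline{\alpha_{r-2}}$ times a geometric-type factor; alternatively one can cite that these are the Verblunsky coefficients of a single-nontrivial-moment measure, for which $\Phi_r$ has this explicit form.

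Next I would substitute into \eqref{eq:nrs_from_nm}. Assume $r>0$ (the $r=0$ case is just \Cref{pro:SNM_nm_a<1}). Then
\[
  \mu_{n,r,s} = \mu_{n+r,s} + \frac{u-u^{-1}}{u^{r+1}-u^{-(r+1)}}\sum_{i=0}^{r-1}\mu_{n+i,s}.
\]
Now I split into the three regimes of the claimed formula and use \Cref{pro:SNM_nm_a<1}, according to which $\mu_{k,s}$ is $1$ if $k=s$, is $-f_{s+1}$ with $f_{s+1}=\frac{u^{s+1}-u^{-(s+1)}}{u^{s+2}-u^{-(s+2)}}$ if $k=s+1$, and is $0$ otherwise (note $\mu_{k,s}=\delta_{k,s}$ for $k\le s$). \textbf{Case $s\ge n+r$:} every index $n+i$ with $0\le i\le r$ is $\le s$, so only $\mu_{n+r,s}=\delta_{n+r,s}$ survives and $\mu_{n,r,s}=\delta_{n+r,s}$. \textbf{Case $n-1\le s<n+r$:} here exactly one index in $\{n,\dots,n+r-1\}$ equals $s$ (contributing $1$ when $s\ge n$) or equals $s+1$ (i.e. the term $n+r$ when $s=n+r-1$, already handled, or an interior term), and exactly one equals $s+1$; the surviving terms are $\mu_{s,s}=1$ (if $n\le s$) and $\mu_{s+1,s}=-f_{s+1}$, while $\mu_{n+r,s}=0$ since $n+r>s$. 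Collecting, the interior contribution is $1 - f_{s+1}$ when $n\le s$, and when $s=n-1$ only the $\mu_{s+1,s}=\mu_{n,s}$ term with value $-f_{s+1}=-f_n$ appears among the $i=0$ term. In all sub-subcases the coefficient $\frac{u-u^{-1}}{u^{r+1}-u^{-(r+1)}}$ multiplies a quantity that telescopes to $-\frac{u^{n}-u^{-n}}{u^{s+2}-u^{-(s+2)}}$; here I would use the identity $1-f_{s+1}=1-\frac{u^{s+1}-u^{-(s+1)}}{u^{s+2}-u^{-(s+2)}}=\frac{u^{s+2}-u^{s+1}-u^{-(s+1)}+u^{-(s+2)}}{u^{s+2}-u^{-(s+2)}}$ and the factorization $u^{s+2}-u^{s+1}-u^{-(s+1)}+u^{-(s+2)}=(u-u^{-1})\cdot\frac{u^{?}-u^{-?}}{u-u^{-1}}$-type manipulations, matching powers so that the product equals $-\frac{(u^n-u^{-n})(u-u^{-1})}{(u^{s+2}-u^{-(s+2)})(u^{r+1}-u^{-(r+1)})}$. \textbf{Case $s<n-1$:} now none of the indices $n,n+1,\dots,n+r$ equals $s$ or $s+1$ (the smallest is $n\ge s+2$), so every $\mu_{n+i,s}=0$ and $\mu_{n,r,s}=0$.

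The routine but slightly delicate part — and the one I expect to be the main obstacle — is the bookkeeping in the middle case $n-1\le s<n+r$: one must carefully track, as $s$ ranges over $\{n-1,n,n+1,\dots,n+r-1\}$, which single index $n+i$ (if any) hits $s$ and which hits $s+1$, handle the boundary $s=n-1$ (where only the $\mu_{n,s}$ term is nonzero and $\mu_{n+r,s}=0$) and the boundary $s=n+r-1$ (where $\mu_{n+r,s}=\mu_{s+1,s}=-f_{s+1}$ comes from the $\mu_{n+r,s}$ term rather than the sum) separately, and then confirm that the resulting expression is the same single closed form in every sub-case. This is purely an exercise in the geometric/hyperbolic identity
\[
  \frac{u^{a}-u^{-a}}{u-u^{-1}} = u^{a-1}+u^{a-3}+\cdots+u^{-(a-1)},
\]
applied to collapse sums of consecutive $\mu_{k,s}$. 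Everything else is linear algebra over the recurrence, already justified by \Cref{thm:Sch} and \Cref{pro:SNM_nm_a<1}. As a sanity check I would verify that setting $a\to 1$ (equivalently $u\to 1$) recovers \Cref{pro:SNM_nm_a1}, since $\frac{u^k-u^{-k}}{u^{\ell}-u^{-\ell}}\to \frac{k}{\ell}$.
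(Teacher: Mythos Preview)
Your overall strategy---use \eqref{eq:nrs_from_nm} together with the explicit values of $\mu_{n,m}$ from \Cref{pro:SNM_nm_a<1}---is exactly what the paper does. The genuine gap is in your formula for the coefficients $p_{r,i}$: you claim $\Phi_r(z)=z^r-\alpha_{r-1}(z^{r-1}+\cdots+z+1)$, but that is the explicit form for the \emph{single inserted mass point} example (\cite[(1.6.6)]{Simon2005a}, used in \Cref{pro:simp_nrs}), not for the single nontrivial moment measure. For the present example the correct formula, quoted in the paper from \cite[(1.6.31)]{Simon2005a}, is
\[
  \Phi_r(z)=\alpha_{r-1}\sum_{j=0}^{r}\frac{1}{\alpha_{j-1}}z^{j}
           =\sum_{j=0}^{r}\frac{u^{j+1}-u^{-(j+1)}}{u^{r+1}-u^{-(r+1)}}\,z^{j},
\]
so $p_{r,j}$ depends on $j$, not just on $r$. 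Your constant-coefficient ansatz fails the Szeg\H{o} recurrence here: it would require $\alpha_{r-1}(1-\alpha_r)=\alpha_r$, which does not hold for $\alpha_n=-\dfrac{u-u^{-1}}{u^{n+2}-u^{-(n+2)}}$. With your wrong $p_{r,i}$, the middle-case sum $\sum_{i=0}^{r-1}\mu_{n+i,s}$ produces an expression that does not even depend on $n$, so it cannot match the claimed answer.

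Once you plug in the correct $p_{r,j}$, the middle case $n-1\le s<n+r$ collapses cleanly without the boundary sub-cases you were anticipating: only the indices $j=s-n$ and $j=s-n+1$ contribute, and even at $s=n-1$ the ``missing'' term $j=-1$ carries the coefficient $\frac{u^{0}-u^{0}}{u^{r+1}-u^{-(r+1)}}=0$, so the same two-term expression
\[
  \mu_{n,r,s}=\frac{u^{s-n+1}-u^{-(s-n+1)}}{u^{r+1}-u^{-(r+1)}}
             -\frac{u^{s-n+2}-u^{-(s-n+2)}}{u^{r+1}-u^{-(r+1)}}\cdot
              \frac{u^{s+1}-u^{-(s+1)}}{u^{s+2}-u^{-(s+2)}}
\]
is valid uniformly. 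Writing $[k]=u^{k}-u^{-k}$ and using $[a][b]-[a+1][b-1]=-[b-a-1][1]$ (equivalently, the product-to-sum identity for $\sinh$) with $a=s-n+1$, $b=s+2$ gives the claimed $-\dfrac{[n][1]}{[s+2][r+1]}$ in one line; no telescoping over consecutive $\mu_{k,s}$ is needed.
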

\begin{proof}
  It is known \cite[(1.6.31)]{Ismail2009} that
  \( \Phi_n(z) = \alpha_{n-1}\sum_{j=0}^n \frac{1}{\alpha_{j-1}}z^j =
  \sum_{j=0}^n \frac{u^{j+1}-u^{-(j+1)}}{u^{n+1}-u^{-(n+1)}}z^j \).
  By~\eqref{eq:nrs_from_nm}, we have
  \[
    \mu_{n,r,s} =  \sum_{j=0}^r \frac{u^{j+1}-u^{-(j+1)}}{u^{r+1}-u^{-(r+1)}}\mu_{n+j,s}.
  \]
  If \( s = n+r \), then
  \( \mu_{n,r,s} =
  \frac{u^{r+1}-u^{-(r+1)}}{u^{r+1}-u^{-(r+1)}}\mu_{n+r,n+r} = 1 \).
  If \( n-1 \le s <n+r \), then
  \begin{align*}
    \mu_{n,r,s} &= \frac{u^{s-n+1}-u^{-\left( s-n+1\right) }}{u^{r+1}-u^{-\left( r+1\right) }}+
                  \frac{u^{s-n+2}-u^{-\left( s-n+2\right) }}{u^{r+1}-u^{-\left( r+1\right) }}
                  \left( -\frac{u^{s+1}-u^{-\left( s+1\right) }}{u^{s+2}-u^{-\left( s+2\right)
                  }}\right) \\
                &= -\frac{\left( u^{n}-u^{-n}\right) \left( u-u^{-1}\right) }{\left(
                  u^{s+2}-u^{-\left( s+2\right) }\right) \left( u^{r+1}-u^{-\left( r+1\right)
                  }\right) }.
  \end{align*}
  Otherwise, we have \( \mu_{n,r,s} = 0 \) since \( \mu_{n+j,s} = 0 \)
  for \( 0 \le j \le r < s-n \).
\end{proof}
 
 \begin{remark}\label{rem:1}
   \Cref{pro:SNM_nm_a1} can also be obtained
   from~\Cref{pro:SNM_nrs_a<1} by taking the limit as \( a \)
   approaches to \( 1^- \).
 \end{remark}

\subsection{Rogers--Szeg\H{o} polynomials}
For \( q \in (0,1) \), the Rogers--Szeg\H{o} polynomials are defined
in \cite[Example~1.6.5]{Simon2005a} by the wrapped Gaussian measure
\( d\mu_q(\theta) = 2\pi(2\pi a)^{-1/2} \sum_{j = - \infty}^{\infty}
e^{-(\theta-2\pi j)^2/2a} \frac{d\theta}{2\pi} \), and are also
defined by the Verblunsky coefficients
\[
  \alpha_n = (-1)^n q^{(n+1)/2}, \qquad  n \geq 0.
\] 

\begin{prop}\label{pro:RogersSzego_nm}
  For the Rogers--Szeg\H{o} polynomials, we have
  \[
    \mu_{n,m} = \qbinom{n}{m} q^{\frac{(n-m)^2}{2}}, 
  \]
\end{prop}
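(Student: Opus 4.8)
The plan is to imitate the proof of \Cref{prop:Jacobi_nm}: read off the recurrence for $\mu_{n,m}$ from the proof of \Cref{thm:Sch} with $r=0$, specialize it to the Rogers--Szeg\H{o} Verblunsky coefficients, and then check that the claimed closed form satisfies the resulting recurrence together with the same initial conditions. Since $q\in(0,1)$ we have $\alpha_n\ne0$ for all $n$, so \Cref{thm:Sch} applies; the Verblunsky coefficients are real, so $\overline{\alpha_m}=\alpha_m$, $|\alpha_m|^2=q^{m+1}$, and $\overline{\alpha_m}/\overline{\alpha_{m-1}}=(-1)^m q^{(m+1)/2}/\big((-1)^{m-1}q^{m/2}\big)=-q^{1/2}$ for $m\ge1$, while for $m=0$ the convention $\alpha_{-1}=-1$ gives $\alpha_0/\alpha_{-1}=-q^{1/2}$ as well. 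Substituting these into the recurrence from the proof of \Cref{thm:Sch} (with $r=0$ and $s=m$) yields
\[
  \mu_{n,m+1}=\frac{1}{1-q^{m+1}}\Big(\mu_{n-1,m}-q^{1/2}\mu_{n,m}+q^{1/2}\mu_{n-1,m-1}\Big),
\]
with the initial data $\mu_{n,m}=\delta_{n,m}$ for $n\le m$ and $\mu_{n,-1}=0$ for all $n\ge0$, both of which are immediate from the Schr\"oder path model. Solving this identity for $\mu_{n,m}$ in terms of $\mu_{n,m+1}$, $\mu_{n-1,m}$, and $\mu_{n-1,m-1}$ (possible since $q^{1/2}\ne0$) and inducting on $n$ shows that the recurrence together with this initial data determines the sequence $(\mu_{n,m})$ uniquely, so it suffices to exhibit one solution.

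Now set $\lambda_{n,m}=\qbinom{n}{m}q^{(n-m)^2/2}$. The initial conditions hold trivially, since $\qbinom{n}{m}=0$ unless $0\le m\le n$, $\qbinom{n}{n}=1$, and $q^0=1$. For the recurrence, multiply through by $1-q^{m+1}$, substitute $\lambda_{a,b}=\qbinom{a}{b}q^{(a-b)^2/2}$, and use the elementary identities $(1-q^{m+1})\qbinom{n}{m+1}=(1-q^{n-m})\qbinom{n}{m}$ and $q^{1/2}q^{(n-m)^2/2}=q^{n-m}q^{(n-m-1)^2/2}$. Dividing the resulting equation by $q^{(n-m-1)^2/2}$, the identity to be checked collapses to
\[
  (1-q^{n-m})\qbinom{n}{m}=\qbinom{n-1}{m}-q^{n-m}\qbinom{n}{m}+q^{n-m}\qbinom{n-1}{m-1},
\]
that is, to the standard $q$-Pascal relation $\qbinom{n}{m}=\qbinom{n-1}{m}+q^{n-m}\qbinom{n-1}{m-1}$. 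This finishes the proof.

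The computation is essentially routine; the only points requiring a little care are the sign bookkeeping in $\overline{\alpha_m}/\overline{\alpha_{m-1}}$ (the alternating factor $(-1)^m$ together with the edge case $m=0$) and the half-integer powers of $q$ coming from the exponent $(n-m)^2/2$, so I expect no genuine obstacle. One could alternatively produce a weight-preserving bijection between $\Luka_{n,m}$ (or the corresponding gentle Motzkin paths) and a $q$-counted family of objects realizing $\qbinom{n}{m}q^{(n-m)^2/2}$, but the recurrence verification above is the shortest route.
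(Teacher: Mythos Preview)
Your proof is correct and follows essentially the same approach as the paper: both use the Schr\"oder-path recurrence from the proof of \Cref{thm:Sch} with $r=0$, specialize the Verblunsky coefficients (noting $\overline{\alpha_m}/\overline{\alpha_{m-1}}=-q^{1/2}$ and $|\alpha_m|^2=q^{m+1}$), and verify that the closed form satisfies this recurrence with the stated initial data. The only cosmetic difference is that the paper solves the recurrence for $\mu_{n,m}$ and checks it by computing three separate ratios that sum to $1$, whereas you keep the recurrence solved for $\mu_{n,m+1}$ and reduce the verification in one step to the $q$-Pascal relation $\qbinom{n}{m}=\qbinom{n-1}{m}+q^{n-m}\qbinom{n-1}{m-1}$; your packaging is arguably a bit cleaner, but the content is the same.
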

where \( \qbinom{n}{m} \) is a \( q \)-binomial given by
\( \qbinom{n}{m}
=\frac{(1-q^n)(1-q^{n-1})\cdots(1-q^{n-m+1})}{(1-q^m)(1-q^{m-1})\cdots(1-q)}
\).
\begin{proof}
  Let \( \sigma_{n,m} \) be the right-hand side of the equation. We
  show that \( \sigma_{n,m} \) has the recurrence relation
  \begin{align*}
    \sigma_{n,m}
    &= \sigma_{n-1,m-1} - \frac{\overline{\alpha_{m-1}}}{\overline{\alpha_{m}}} \sigma_{n-1,m} +\frac{\overline{\alpha_{m-1}}}{\overline{\alpha_{m}}}(1-|\alpha_m|^2)\sigma_{n,m+1} \\
    &= \sigma_{n-1,m-1} + q^{(-\frac{1}{2})} \sigma_{n-1,m} -q^{(-\frac{1}{2})}(1-|q|^{m+1}) \sigma_{n,m+1},
  \end{align*}
  with the initial conditions \( \sigma_{n,m} = \delta_{n,m} \) for
  \( n \le m \) and \( \sigma_{n,-1} =0 \) for all \( n \geq 0 \).
  From \( \qbinom{n-1}{m-1} = \frac{1-q^m}{1-q^n} \qbinom{n}{m} \), we
  immediately obtain
  \begin{equation}\label{eq:RS_rec1}
    \sigma_{n-1,m-1} = \frac{1-q^m}{1-q^n}\sigma_{n,m}. 
  \end{equation}
  Moreover, we use
  \begin{align*}
    \qbinom{n-1}{m}q^{\binom{n-1-m}{2}}q^{\frac{n-1-m}{2}} &= \frac{1-q^{n-m}}{1-q^n}q^{-(n-1-m)}q^{(-\frac{1}{2})}\qbinom{n}{m}, \quad\mbox{and}  \\
    \qbinom{n}{m+1}q^{\binom{n-m-1}{2}}q^{\frac{n-m-1}{2}} &=  \frac{1-q^{n-m}}{1-q^{m+1}}q^{-(n-1-m)}q^{(-\frac{1}{2})}\qbinom{n}{m},
  \end{align*}
  to derive
  \begin{align}
    \label{eq:RS_rec2}q^{(-\frac{1}{2})} \sigma_{n-1,m} &= \frac{q^{m-n}-1}{1-q^n} \sigma_{n,m}, \quad\mbox{and}\\
    \label{eq:RS_rec3}q^{(-\frac{1}{2})}(1-q^{m+1}) \sigma_{n,m+1} &= (q^{m-n}-1) \sigma_{n,m}.
  \end{align}
  Therefore, we obtain the desired recurrence relation from
  \eqref{eq:RS_rec1}, \eqref{eq:RS_rec2}, \eqref{eq:RS_rec3} together
  with a direct computation
  \[
    \frac{1-q^m}{1-q^n} + \frac{q^{m-n}-1}{1-q^n} - (q^{m-n}-1) = 1.\qedhere
  \]
\end{proof}

\begin{prop}\label{pro:RogersSzego_nrs}
  For the Rogers--Szeg\H{o} polynomials, we have
 \[
   \mu_{n,r,s} = \sum_{j=0}^r (-1)^{r-j}\qbinom{r}{j} \qbinom{n+j}{s} q^{\frac{r-j}{2}}q^{\frac{(n+j-s)^2}{2}},
 \]
 where \( \qbinom{n}{m} \) is a \( q \)-binomial given by
 \( \qbinom{n}{m}
 =\frac{(1-q^n)(1-q^{n-1})\cdots(1-q^{n-m+1})}{(1-q^m)(1-q^{m-1})\cdots(1-q)}
 \).
 \end{prop}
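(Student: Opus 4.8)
The plan is to follow the same route already used for \Cref{pro:Jacobi_nrs} and \Cref{pro:SNM_nrs_a<1}: reduce the two-parameter moment $\mu_{n,r,s}$ to the one-parameter moments $\mu_{n,m}$ of \Cref{pro:RogersSzego_nm} via the expansion~\eqref{eq:nrs_from_nm}, namely $\mu_{n,r,s}=\sum_{i=0}^{r}\overline{p_{r,i}}\,\mu_{n+i,s}$ where $\Phi_r(z)=\sum_{i=0}^{r}p_{r,i}z^i$. The only new ingredient needed is an explicit formula for the coefficients $p_{r,i}$ of the monic Rogers--Szeg\H{o} polynomials.

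I would first establish, by induction on $r$, the pair of identities
\[
  \Phi_r(z)=\sum_{i=0}^{r}(-1)^{r-i}\qbinom{r}{i}q^{(r-i)/2}z^i,
  \qquad
  \Phi_r^*(z)=\sum_{i=0}^{r}(-1)^{i}\qbinom{r}{i}q^{i/2}z^i.
\]
(These are classical and could instead be quoted from the literature on Rogers--Szeg\H{o} OPUC.) The base case $r=0$ is immediate, and in the inductive step one plugs the formulas for $\Phi_r$ and $\Phi_r^*$ into Szeg\H{o}'s recurrences~\eqref{eq:rec1} and~\eqref{eq:rec2} with $\alpha_r=(-1)^rq^{(r+1)/2}$ (which is real, so $\overline{\alpha_r}=\alpha_r$); after collecting the coefficient of $z^i$, the identity to check for $\Phi_{r+1}$ reduces to the $q$-Pascal rule $\qbinom{r+1}{i}=\qbinom{r}{i-1}+q^{i}\qbinom{r}{i}$, and the one for $\Phi_{r+1}^*$ reduces to its companion $\qbinom{r+1}{i}=q^{\,r+1-i}\qbinom{r}{i-1}+\qbinom{r}{i}$. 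In particular $p_{r,i}=(-1)^{r-i}\qbinom{r}{i}q^{(r-i)/2}$ is real, so $\overline{p_{r,i}}=p_{r,i}$ since $q\in(0,1)$.

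Substituting into~\eqref{eq:nrs_from_nm} and using \Cref{pro:RogersSzego_nm} then gives
\[
  \mu_{n,r,s}=\sum_{i=0}^{r}p_{r,i}\,\mu_{n+i,s}
  =\sum_{i=0}^{r}(-1)^{r-i}\qbinom{r}{i}q^{(r-i)/2}\qbinom{n+i}{s}q^{(n+i-s)^2/2},
\]
which is the claimed formula after renaming the summation index $i$ as $j$. The main obstacle is the joint induction establishing the closed forms of $\Phi_r(z)$ and $\Phi_r^*(z)$: because Szeg\H{o}'s recurrence intertwines $\Phi_r$ and $\Phi_r^*$, neither formula can be proved in isolation, so both must be carried along simultaneously and both versions of $q$-Pascal are used. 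Once that is done, the rest is pure substitution, and the consistency check $r=0$ recovers \Cref{pro:RogersSzego_nm}.
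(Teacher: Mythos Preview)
Your proposal is correct and follows essentially the same route as the paper: expand $\Phi_r(z)$ explicitly and substitute into~\eqref{eq:nrs_from_nm} together with \Cref{pro:RogersSzego_nm}. The paper simply quotes the coefficient formula $\Phi_n(z)=\sum_{j=0}^n(-1)^{n-j}\qbinom{n}{j}q^{(n-j)/2}z^j$ from \cite[Theorem~1.6.7]{Simon2005a}, whereas you supply an inductive proof of it; note that since $\Phi_r^*$ is by definition the reversal of $\Phi_r$, the formula for $\Phi_r^*$ follows immediately from that for $\Phi_r$, so a single induction on $\Phi_r$ (using the one $q$-Pascal rule you identified) already suffices rather than the joint induction you describe.
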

\begin{proof}
  It is known \cite[Theorem 1.6.7]{Simon2005a} that  \(  \Phi_n(z) = \sum_{j=0}^n(-1)^{n-j}\qbinom{n}{j} q^{\frac{n-j}{2}} \). By~\eqref{eq:nrs_from_nm}, we have
  \begin{align*}
    \mu_{n,r,s}&= \sum_{j=0}^r(-1)^{r-j}\qbinom{r}{j}q^{\frac{r-j}{2}} \mu_{n+j,s}  \\
               &=  \sum_{j=0}^r(-1)^{r-j}\qbinom{r}{j} \qbinom{n+j}{s} q^{\frac{r-j}{2}}q^{\frac{(n+j-s)^2}{2}}.\qedhere
  \end{align*}
\end{proof}

\begin{remark}
  Following \cite[p.82]{Simon2005a}, one can extend
  \Cref{pro:RogersSzego_nm} and \Cref{pro:RogersSzego_nrs} for
  \( q \in (0,1) \) to the results for a complex number
  \( |q|\omega\in\DD \) with \( |\omega| = 1 \). The Verblunsky
  coefficients and \( \Phi_n(z) \) are given by
  \begin{align*}
    \alpha_n &= -(-\overline{\omega}^{n+1})|q|^{(n+1)/2}, \\
    \Phi_n(z) &=  \sum_{j=0}^n(-\omega)^{n-j}\Qbinom{n}{j}{|q|} |q|^{\frac{n-j}{2}} z^j.
  \end{align*}
  One can obtain \( \mu_{n,m} \) and \( \mu_{n,r,s} \) in a similar
  way. We leave it to the readers.
\end{remark}

We end this section with another type of an example. The
Al-Salam--Carlitz $q$-polynomials on the unit circle have the
following Verblunsky coefficients:
\[
  \alpha_{2n} = 0, \quad \alpha_{2n+1} = 1-2q^{n+1}, \quad n \geq 0.
\]
For all other examples except the Bernstein--Szeg\H{o} polynomials, we
used Schr\"oder path model to obtain \( \mu_{n,m} \) and
\( \mu_{n,r,s} \) since \( \alpha_n \ne 0 \). For the
Bernstein--Szeg\H{o} polynomials, we were able to find \( \mu_{n,m} \)
and \( \mu_{n,r,s} \) directly from the formula of \( \Phi_n(z) \).
However, none of these methods works for the Al-Salam--Carlitz
\( q \)-polynomials. It would be interesting to find explicit forms of
\( \mu_{n,m} \) and \( \mu_{n,r,s} \) for them.

\section{Generalized linearization coefficients and their variations}
\label{sec:generalized_lc}

Let us consider
\[
  z^n\Phi_r(z) = \sum_{s\geq 0}a_{n,r,s} \Phi_s(z).
\]
We call \( a_{n,r,s} \) a \emph{generalized linearization
  coefficient}, and we provided combinatorial interpretations for the
conjugate of it in \Cref{thm:Luka} and \Cref{thm:gentle Motzkin},
which states:
\[
  \mu_{n,r,s} = \overline{a_{n,r,s}}.
\]
We introduce other kinds of generalized linearization coefficients,
\( b_{n,r,s}, c_{n,r,s} \) and \( d_{n,r,s} \):
\begin{align}
  \label{eq:lc1} z^n\Phi_r^*(z) &= \sum_{s\ge0}b_{n,r,s}\Phi_s(z),\\
  \label{eq:lc2} z^n\Phi_r(z) &= \sum_{s\ge0}c_{n,r,s}\Phi_s^*(z),\\
  \label{eq:lc3} z^n\Phi_r^*(z) &= \sum_{s\ge0}d_{n,r,s}\Phi_s^*(z).
\end{align}
For brevity, we often refer to \( a_{n,r,s} \),
\( b_{n,r,s}, c_{n,r,s} \), and \( d_{n,r,s} \) as linearization
coefficients. In this section, we give combinatorial interpretations
for
\[
  \nu_{n,r,s}:= \overline{b_{n,r,s}},\quad \eta_{n,r,s}:= \overline{c_{n,r,s}} \qand \theta_{n,r,s} := \overline{d_{n,r,s}},
\]
as the second application of combinatorial interpretations for the
generalized moments. Note that if \( \alpha_i=0 \) for some
\( 0 \le i \le n+r \), then by~\eqref{eq:rec2} we have
\( \deg \Phi_s^*<s \) for \( s > i \), whereas we always have
\( \deg \Phi_s = s \) for any \( \alpha_i \). Thus, the linearization
coefficients \( c_{n,r,s} \) and \( d_{n,r,s} \) are uniquely
determined only when \( \alpha_i\ne 0 \) for \( 0\le i \le n+r \).
This is the reason why we assume that \( \alpha_i\ne 0 \) for
\( i\geq 0 \) in~\Cref{pro:phi_star_c_nrs}
and~\Cref{pro:Phi_star_d_nrs}.

For \eqref{eq:lc1}, the conjugate of the linearization coefficient
\( b_{n,r,s} \) can be written as
\begin{align*}
  \overline{b_{n,r,s}} = \frac{\langle \Phi_s,z^n\Phi_r^* \rangle}{\langle \Phi_s,\Phi_s \rangle}.
\end{align*}

We give a combinatorial interpretation for
\( \nu_{n,r,s}=\overline{b_{n,r,s}} \).

\begin{prop}\label{prop:phi_star1}
  For nonnegative integers \( n \), \( r \), and \( s \), we have
  \[
    \nu_{n,r,s} =\alpha_r^{-1} \sum_{p}\wt_L(p),
  \]
  where the summation is over all \L{}ukasiewicz paths from
  \( (-1,r) \) to \( (n,s) \) that do not start with an up-step
  \( (1,1) \). The weight function \( \wt_L \) is defined in
  \Cref{def:Luka}.
\end{prop}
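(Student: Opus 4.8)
The plan is to express $\nu_{n,r,s}$ as an explicit weighted sum of the ordinary generalized moments $\mu_{n,m,s}$ for $0\le m\le r$, and then to recognize that sum as a first-step decomposition of the \L{}ukasiewicz paths appearing on the right-hand side. Recall from just before the statement that $\nu_{n,r,s}=\langle\Phi_s(z),z^n\Phi_r^*(z)\rangle/\langle\Phi_s(z),\Phi_s(z)\rangle$, so everything reduces to understanding $z^n\Phi_r^*(z)$ in the orthogonal basis $\{\Phi_m(z)\}_{m\ge 0}$.

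The key algebraic input is the expansion
\[
\Phi_r^*(z)=\sum_{m=0}^{r}\left(-\alpha_{m-1}\prod_{j=m}^{r-1}(1-|\alpha_j|^2)\right)\Phi_m(z),
\]
with the conventions that the empty product equals $1$ and $\alpha_{-1}=-1$. I would prove this by induction on $r$. The base case $r=0$ is $\Phi_0^*=\Phi_0=1$. For the inductive step, use \eqref{eq:rec2} to write $\Phi_{r+1}^*(z)=\Phi_r^*(z)-\alpha_r z\Phi_r(z)$ and then \eqref{eq:rec1} in the form $z\Phi_r(z)=\Phi_{r+1}(z)+\overline{\alpha_r}\Phi_r^*(z)$ to obtain $\Phi_{r+1}^*(z)=(1-|\alpha_r|^2)\Phi_r^*(z)-\alpha_r\Phi_{r+1}(z)$; applying the induction hypothesis, the factor $1-|\alpha_r|^2$ extends each product $\prod_{j=m}^{r-1}$ to $\prod_{j=m}^{r}$, and the term $-\alpha_r\Phi_{r+1}(z)$ is exactly the new $m=r+1$ summand.

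Multiplying this identity by $z^n$, pairing with $\Phi_s(z)$ under $\langle\cdot,\cdot\rangle$ (which is conjugate-linear in the second slot, and in which the real numbers $\prod_j(1-|\alpha_j|^2)$ are unaffected), and dividing by $\langle\Phi_s(z),\Phi_s(z)\rangle$ gives
\[
\nu_{n,r,s}=\sum_{m=0}^{r}\left(-\overline{\alpha_{m-1}}\prod_{j=m}^{r-1}(1-|\alpha_j|^2)\right)\mu_{n,m,s}.
\]
On the combinatorial side, a \L{}ukasiewicz path from $(-1,r)$ to $(n,s)$ that does not begin with an up-step must start with an $(r-m)$-down-step from $(-1,r)$ to $(0,m)$ for some $0\le m\le r$ (a horizontal step when $m=r$), followed by an arbitrary path in $\Luka_{n,m,s}$. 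By \Cref{def:Luka} that first step has weight $-\alpha_r\overline{\alpha_{m-1}}\prod_{j=m}^{r-1}(1-|\alpha_j|^2)$, and by \Cref{thm:Luka} the tails contribute $\mu_{n,m,s}$. Summing over $m$ therefore yields $\sum_p\wtL(p)=\alpha_r\,\nu_{n,r,s}$, and dividing by $\alpha_r$ gives the proposition.

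None of the individual steps is hard; the part requiring the most care is the bookkeeping — the index shift $\alpha_m\mapsto\alpha_{m-1}$, the telescoping of the factors $1-|\alpha_j|^2$, and the convention $\alpha_{-1}=-1$, which is what makes the $m=0$ term come out correctly in both the algebraic identity and the path decomposition. One should also flag the tacit hypothesis $\alpha_r\neq 0$ that is needed in order to divide by $\alpha_r$; without it, the statement valid for all Verblunsky coefficients is $\alpha_r\,\nu_{n,r,s}=\sum_p\wtL(p)$.
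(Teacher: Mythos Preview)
Your argument is correct, and it differs from the paper's in a useful way. The paper proceeds by deriving from \eqref{eq:rec2} the recurrence
\[
\nu_{n,r+1,s}=\nu_{n,r,s}-\overline{\alpha_r}\,\mu_{n+1,r,s},
\]
and then verifies, by a somewhat intricate splitting and reindexing of the first-step decomposition, that the path sum $\widehat{\nu}_{n,r,s}$ satisfies the very same recurrence (together with the initial condition $\widehat{\nu}_{n,0,s}=\mu_{n,0,s}$). You instead establish once and for all the closed-form expansion
\[
\Phi_r^*(z)=\sum_{m=0}^{r}\Bigl(-\alpha_{m-1}\prod_{j=m}^{r-1}(1-|\alpha_j|^2)\Bigr)\Phi_m(z),
\]
by a short induction using $\Phi_{r+1}^*(z)=(1-|\alpha_r|^2)\Phi_r^*(z)-\alpha_r\Phi_{r+1}(z)$, and then read off $\nu_{n,r,s}$ directly; the coefficients in the expansion coincide, up to the global factor $\alpha_r$, with the weights of the first step $(-1,r)\to(0,m)$, so the combinatorial identification is immediate. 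Your route avoids the algebraic bookkeeping in the paper's verification and yields the explicit $\Phi_r^*$-expansion as a pleasant byproduct; the paper's route has the advantage of being parallel in style to the proofs of \Cref{thm:Luka} and \Cref{thm:Sch}. Your remark that the statement as written tacitly assumes $\alpha_r\neq 0$, and that the clean version valid for all Verblunsky coefficients is $\alpha_r\,\nu_{n,r,s}=\sum_p \wtL(p)$, is a fair point not addressed explicitly in the paper.
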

\begin{proof}
  Using \eqref{eq:rec2}, one can see that
  \[
    \nu_{n,r+1,s} = \nu_{n,r,s}-\overline{\alpha_r}\mu_{n+1,r,s}.
  \]
  Since \( \mu_{n,r,s} \) for \( n,r,s \geq 0 \) are given, the value
  \( \nu_{n,r,s} \) is uniquely determined by the above recurrence
  relation with the initial condition \( \nu_{n,0,s} = \mu_{n,0,s} \).
  The initial condition follows by definition. Let
  \( \widehat{\nu}_{n,r,s} \) be the right-hand side of the equation
  in the statement. It suffices to show that
  \begin{equation}\label{eq:nu_rec}
   \widehat{\nu}_{n,r+1,s} = \widehat{\nu}_{n,r,s}-\overline{\alpha_r}\mu_{n+1,r,s},
  \end{equation}
  and the initial condition \( \widehat{\nu}_{n,0,s} = \mu_{n,0,s} \).
  This can be seen by examining \( \widehat{\nu}_{n,r+1,s} \)
  according to the first step as follows:
  \begin{align*}
    \widehat{\nu}_{n,r+1,s} =& \sum_{k=1}^{r+1}\left(-\overline{\alpha_{r-k}}\prod_{j=r+1-k}^{r}(1-|\alpha_j|^2)\right)\mu_{n,r+1-k,s} - \overline{\alpha_{r}}\mu_{n,r+1,s} \\
                             =&  \sum_{k=1}^{r+1}\left(-\overline{\alpha_{r-k}}\prod_{j=r+1-k}^{r-1}(1-|\alpha_j|^2)\right)\mu_{n,r+1-k,s} \\
                             &- \sum_{k=1}^{r+1}\overline{\alpha_{r}}\left(-\alpha_r\overline{\alpha_{r-k}}\prod_{j=r+1-k}^{r-1}(1-|\alpha_j|^2)\right)\mu_{n,r+1-k,s} - \overline{\alpha_{r}}\mu_{n,r+1,s} \\
                             =&  \sum_{k=0}^{r}\left(-\overline{\alpha_{r-k-1}}\prod_{j=r-k}^{r-1}(1-|\alpha_j|^2)\right)\mu_{n,r-k,s} - \overline{\alpha_{r}}\mu_{n+1,r,s} \\
                             =& \widehat{\nu}_{n,r,s}-\overline{\alpha_r}\mu_{n+1,r,s}.
  \end{align*}
  For the initial condition, since \( \wtL((-1,0) \to (0,0)) = \alpha_0 \), we immediately have \( \widehat{\nu}_{n,0,s} = \mu_{n,0,s} \) by~\Cref{thm:Luka}.
 \end{proof}

 \begin{remark}\label{rem:2}
   As we discussed in ~\Cref{rmk:RI_OPUC}, we have
   \[
     \sum_p \wtS(p) =  \frac{1}{\overline{\alpha_s}}\cdot\frac{\langle z^{n+1}\Phi_r,\Phi_s^* \rangle}{\langle \Phi_s,\Phi_s \rangle},
   \]
   where the sum is over all Schr\"oder paths from \( (0,r) \) to
   \( (n,s) \). The right-hand side of the above equation is written
   in terms of
   \( \nu_{n,r,s} = \frac{\langle \Phi_s,z^n\Phi_r^* \rangle}{\langle
     \Phi_s,\Phi_s \rangle} \):
   \begin{align*}
     \frac{1}{\overline{\alpha_s}}\cdot\frac{\langle z^{n+1}\Phi_r,\Phi_s^* \rangle}{\langle \Phi_s,\Phi_s \rangle} &=  \frac{1}{\overline{\alpha_s}}\cdot \frac{\langle \Phi_r, z^{-n-1}\Phi_s^* \rangle}{\langle \Phi_s, \Phi_s \rangle}  = \frac{1}{\overline{\alpha_s}}\cdot\frac{\langle \Phi_r, \Phi_r \rangle}{\langle \Phi_s,\Phi_s \rangle}\cdot\nu_{-n-1,s,r}. 
   \end{align*}
  This leads to an immediate combinatorial interpretation for \( \nu_{-n,r,s} \):
   \[
     \nu_{-n,r,s} = \overline{\alpha_r} \cdot \frac{\prod_{j=0}^{r-1}(1-|\alpha_j|^2)}{\prod_{j=0}^{s-1}(1-|\alpha_j|^2)} \cdot  \sum_p \wtS(p), 
   \]
   where the sum is over all Schr\"oder paths from \( (0,s) \) to \( (n-1,r) \).
   
 \end{remark}

 Next, for \eqref{eq:lc2}, we cannot apply a similar argument since
 \( \langle \Phi_k^*,\Phi_s^* \rangle \) and
 \( \langle \Phi_k,\Phi_s^* \rangle \) are not zero even for
 \( k < s \). We give a combinatorial description for
 \( \eta_{n,r,s}=\overline{c_{n,r,s}} \) in a different way using
 matrix inverses. To do this, we need an additional condition
 \( \alpha_i\ne0 \) for \( i\geq 0 \).

\begin{prop}\label{pro:phi_star_c_nrs}
  For nonnegative integers \( n \), \( r \), and \( s \), let
  \( \alpha_i\ne0 \) for \( 0 \le i \le n+r \). Then, for \( n\ge1 \),
  we have
  \[
    \eta_{n,r,s}=-\left(\overline{\alpha_{s-1}}\right)^{-1}\sum_{p}\wt_S(p),
  \]
  where the summation is over all Schr\"oder paths in
  \( \Sch_{n,r,s} \) that do not also end with a vertical down-step
  \( (0,-1) \), and where the weight function \( \wt_S \) is defined
  in \Cref{def:Sch}. For \( n = 0 \), the value \( \eta_{0,r,s} \) is
  given by
  \[
    \eta_{0,r,s}=\begin{cases}
                   -\frac{1}{\overline{\alpha_{r-1}}} & \mbox{if } s=r,\\
                   \frac{1-|\alpha_{r-1}|^2}{\overline{\alpha_{r-1}}} & \mbox{if } s=r-1, \\
                   0 & \mbox{otherwise.}
                 \end{cases}
               \]
             \end{prop}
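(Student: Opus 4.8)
The plan is to relate the new coefficient $c_{n,r,s}$ to the $a_{n,r,s}$'s — equivalently, $\eta_{n,r,s}$ to the generalized moments $\mu_{n,r,t}$ — and then read off the Schr\"oder-path count from \Cref{thm:Sch}. The whole argument hinges on a change-of-basis between $\{\Phi_s(z)\}_{s\ge0}$ and $\{\Phi_s^*(z)\}_{s\ge0}$, which is where the hypothesis $\alpha_i\ne0$ and the phrase ``matrix inverses'' come in.

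First I would combine the two equivalent forms \eqref{eq:rec1} and \eqref{eq:rec2} of Szeg\H{o}'s recurrence to eliminate $z\Phi_s(z)$; using $\alpha_s\ne0$ this yields
\[
  \Phi_s(z) = \frac{1-|\alpha_{s-1}|^2}{\alpha_{s-1}}\,\Phi_{s-1}^*(z) - \frac{1}{\alpha_{s-1}}\,\Phi_s^*(z)\qquad(s\ge1),
\]
while $\Phi_0(z)=\Phi_0^*(z)=1$; with the convention $\alpha_{-1}=-1$ the case $s=0$ is subsumed as well. (Put differently, the lower-triangular matrix expressing $\{\Phi_s^*\}$ in $\{\Phi_s\}$, namely Verblunsky's formula, has a bidiagonal inverse.) Substituting into $z^n\Phi_r(z)=\sum_{t\ge0}a_{n,r,t}\Phi_t(z)$ and collecting the coefficient of $\Phi_s^*(z)$ — to which only $t=s$ and $t=s+1$ contribute — gives $c_{n,r,s}=-\alpha_{s-1}^{-1}a_{n,r,s}+\alpha_s^{-1}(1-|\alpha_s|^2)a_{n,r,s+1}$. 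Conjugating and using $\mu_{n,r,t}=\overline{a_{n,r,t}}$ then produces, for every $n\ge0$,
\[
  \eta_{n,r,s} = -\frac{1}{\overline{\alpha_{s-1}}}\,\mu_{n,r,s} + \frac{1-|\alpha_s|^2}{\overline{\alpha_s}}\,\mu_{n,r,s+1}.
\]

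For $n=0$ the claim is immediate from $\mu_{0,r,t}=\delta_{r,t}$: the right-hand side collapses to $-\overline{\alpha_{r-1}}^{\,-1}\delta_{r,s}+\overline{\alpha_s}^{\,-1}(1-|\alpha_s|^2)\delta_{r,s+1}$, which is exactly the stated case analysis (the $r=0$ subcase being absorbed by $\alpha_{-1}=-1$). For $n\ge1$ I would interpret the right-hand side combinatorially through \Cref{thm:Sch}: write $\mu_{n,r,s}=\sum_{p\in\Sch_{n,r,s}}\wtS(p)$ and split $\Sch_{n,r,s}$ according to whether $p$ ends with a vertical down-step $(0,-1)$. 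Deleting a terminal vertical down-step $(n,s+1)\to(n,s)$, which carries weight $\tfrac{\overline{\alpha_{s-1}}}{\overline{\alpha_s}}(1-|\alpha_s|^2)$, is a bijection from the paths of that type onto $\Sch_{n,r,s+1}$ (the ``does not start with $(0,-1)$'' condition is inherited), so those paths contribute $\tfrac{\overline{\alpha_{s-1}}}{\overline{\alpha_s}}(1-|\alpha_s|^2)\mu_{n,r,s+1}$. Hence
\[
  \sum_{p}\wtS(p) = \mu_{n,r,s} - \frac{\overline{\alpha_{s-1}}}{\overline{\alpha_s}}(1-|\alpha_s|^2)\,\mu_{n,r,s+1} = -\,\overline{\alpha_{s-1}}\,\eta_{n,r,s},
\]
the sum being over the $p\in\Sch_{n,r,s}$ that do not end with a vertical down-step, and the last equality being the formula for $\eta_{n,r,s}$ above. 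This is the assertion.

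The part needing care is index bookkeeping rather than any real difficulty: getting the shift between $\alpha_s$ and $\alpha_{s-1}$ right in the bidiagonal expansion, handling the convention $\alpha_{-1}=-1$ at the $s=0$ (and $r=0$) boundary, and remembering that a terminal vertical down-step of a path ending at height $s$ starts at height $s+1$, so its weight involves $\alpha_{s-1},\alpha_s$ and not $\alpha_s,\alpha_{s+1}$. One should also note the degenerate range $s>n+r$, where $\eta_{n,r,s}=0$ and the relevant path set is empty, so that both sides vanish consistently.
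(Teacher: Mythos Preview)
Your argument is correct and lands on exactly the same key identity the paper derives,
\[
  \eta_{n,r,s} = -\frac{1}{\overline{\alpha_{s-1}}}\,\mu_{n,r,s} + \frac{1-|\alpha_s|^2}{\overline{\alpha_s}}\,\mu_{n,r,s+1},
\]
after which both proofs interpret the right-hand side via \Cref{thm:Sch} by stripping off a terminal vertical down-step. The only difference is in how this bidiagonal relation is obtained: the paper phrases it as the matrix equation $(\mu_{n,i,j})_{i,j}(\nu_{0,i,j})_{i,j}^{-1}=(\eta_{n,i,j})_{i,j}$ and invokes \Cref{prop:phi_star1} to compute $(\nu_{0,i,j})^{-1}$, whereas you derive the expansion $\Phi_s=\tfrac{1-|\alpha_{s-1}|^2}{\alpha_{s-1}}\Phi_{s-1}^*-\tfrac{1}{\alpha_{s-1}}\Phi_s^*$ directly from Szeg\H{o}'s recurrences and read off $c_{n,r,s}$ by substitution. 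Your route is a touch more self-contained, bypassing $\nu_{0,r,s}$ and \Cref{prop:phi_star1}; the paper's phrasing, on the other hand, parallels the proof of \Cref{pro:Phi_star_d_nrs} and makes the ``matrix inverse'' structure explicit. Substantively, the two approaches are the same.
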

\begin{proof}
  Taking \( \langle \Phi_k,\cdot \rangle \) and dividing by
  \( \langle \Phi_k,\Phi_k \rangle \) on both sides of~\eqref{eq:lc2},
  we obtain
  \[
    \mu_{n,r,k}=\sum_{s \geq 0}\eta_{n,r,s}\nu_{0,s,k}.
  \]
  This can be written as a matrix equation:
  \[
    (\mu_{n,i,j})_{i,j\ge0} (\nu_{0,i,j})_{i,j\ge0}^{-1} = (\eta_{n,i,j})_{i,j\ge0}.
  \]
  Using \Cref{prop:phi_star1}, we can compute the inverse matrix
  \( (\nu_{0,i,j})_{i,j\ge0}^{-1}=(\tau_{i,j})_{i,j\ge0} \), where
  \[
    \tau_{i,j} =
    \begin{cases}
      -\frac{1}{\overline{\alpha_{j-1}}}& \mbox{if } i=j,\\
      \frac{1-|\alpha_j|^2}{\overline{\alpha_j}}& \mbox{if } i=j+1,\\
      0 & \mbox{otherwise.}
    \end{cases}
  \]
  Thus, we have
  \begin{equation}\label{eq:c_nrs}
    \eta_{n,r,s} = \sum_{k \geq 0}\mu_{n,r,k}\tau_{k,s}=\frac{\mu_{n,r,s+1}(1-|\alpha_s|^2)}{\overline{\alpha_s}}-\frac{\mu_{n,r,s}}{\overline{\alpha_{s-1}}} = -\frac{1}{\overline{\alpha_{s-1}}}\left(\mu_{n,r,s}-\frac{\overline{\alpha_{s-1}}}{\overline{\alpha_s}}(1-|\alpha_s|^2)\mu_{n,r,s+1}\right).
  \end{equation}
  If we interprete \( \mu_{n,r,s} \) using the step set
  in~\Cref{def:Sch}, we see that the term in the parenthesis on the
  rightmost side means that we do not use a vertical step at the end.
  This completes the proof.
\end{proof}

Finally, for \( \theta_{n,r,s}=\overline{d_{n,r,s}} \), we can apply a
similar argument as in the proof of \Cref{pro:phi_star_c_nrs}.

\begin{prop}\label{pro:Phi_star_d_nrs}
  For nonnegative integers \( n \), \( r \) and \( s \), let
  \( \alpha_i\ne0 \) for \( 0\le i\le n+r \). For \( n\geq 1 \), we
  have
   \[
     \theta_{n,r,s}=\frac{\overline{\alpha_{r-1}}}{ \overline{\alpha_{s-1}}\cdot |\alpha_r|^2}\sum_{p}\wt_S(p),
  \]
  where the summation is over all Schr\"oder paths from \( (0,r) \) to
  \( (n,s) \) that do not end with a vertical down-step \( (0,-1) \),
  and where the weight function \( \wt_S \) is defined in
  \Cref{def:Sch}. For \( n=0 \), the value \( \theta_{0,r,s} \) is
  given by \( \theta_{0,r,s}=\delta_{r,s}. \)
\end{prop}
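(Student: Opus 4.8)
The plan is to run the argument of \Cref{pro:phi_star_c_nrs}, with the quantities $\nu_{n,r,s}$ (from \Cref{prop:phi_star1}) in place of the moments $\mu_{n,r,s}$ used there. The case $n=0$ is immediate: since $\alpha_i\ne 0$ for $0\le i\le r$, the leading coefficient of $\Phi_r^*(z)$ equals $\overline{\Phi_r(0)}=-\alpha_{r-1}\ne 0$, so $\deg\Phi_r^*(z)=r$ and $\{\Phi_s^*(z)\}_{s\ge0}$ is a graded basis of $\CC[z]$; comparing the two sides of $\Phi_r^*(z)=\sum_s d_{0,r,s}\Phi_s^*(z)$ degree by degree forces $d_{0,r,s}=\delta_{r,s}$, hence $\theta_{0,r,s}=\delta_{r,s}$.

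For $n\ge 1$, I would apply $\langle\Phi_k(z),\cdot\rangle/\langle\Phi_k(z),\Phi_k(z)\rangle$ to both sides of \eqref{eq:lc3}. Expanding $\Phi_s^*$ in the basis $\{\Phi_j\}$ exactly as in \Cref{pro:phi_star_c_nrs} gives $\langle\Phi_k(z),\Phi_s^*(z)\rangle/\langle\Phi_k(z),\Phi_k(z)\rangle=\nu_{0,s,k}$, so that $\nu_{n,r,k}=\sum_{s\ge0}\theta_{n,r,s}\nu_{0,s,k}$; in matrix form, $(\theta_{n,i,j})=(\nu_{n,i,j})(\nu_{0,i,j})^{-1}$. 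The inverse $(\nu_{0,i,j})^{-1}=(\tau_{i,j})$ was already computed in the proof of \Cref{pro:phi_star_c_nrs}: it is lower bidiagonal with $\tau_{s,s}=-1/\overline{\alpha_{s-1}}$ and $\tau_{s+1,s}=(1-|\alpha_s|^2)/\overline{\alpha_s}$. Reading off the $(r,s)$-entry of the product yields the exact analogue of \eqref{eq:c_nrs},
\[
  \theta_{n,r,s}=-\frac{1}{\overline{\alpha_{s-1}}}\left(\nu_{n,r,s}-\frac{\overline{\alpha_{s-1}}}{\overline{\alpha_s}}(1-|\alpha_s|^2)\,\nu_{n,r,s+1}\right).
\]

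It then remains to put $\nu_{n,r,s}$ into the Schr\"oder model. Applying the three-term recurrence \eqref{eq:rec3} to $\Phi_{s+1}$ inside the first argument of $\langle\Phi_{s+1}(z),z^n\Phi_r^*(z)\rangle$ and simplifying as in the proof of \Cref{thm:Sch}, I expect to find that $\nu_{n,r,s}$ satisfies the \emph{same} recurrence in the variables $(n,s)$ as $\mu_{n,r,s}$ does, with $r$ an inert index; hence $\nu_{n,r,s}$ is again a weighted count of Schr\"oder paths from $(0,r)$ to $(n,s)$, determined by its values at $s=0$. Those values are given by \Cref{prop:phi_star1} --- concretely $\nu_{0,r,s}=-\overline{\alpha_{s-1}}\prod_{j=s}^{r-1}(1-|\alpha_j|^2)$ and $\nu_{n,r+1,s}=\nu_{n,r,s}-\overline{\alpha_r}\mu_{n+1,r,s}$ --- and comparing them against Schr\"oder paths should show that the only change relative to the model for $\mu_{n,r,0}$ is that the path is now allowed to begin with a vertical down-step, at the cost of an overall factor $-\overline{\alpha_{r-1}}$. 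This would give $\nu_{n,r,s}=-\overline{\alpha_{r-1}}\sum_p\wt_S(p)$, the sum over \emph{all} Schr\"oder paths from $(0,r)$ to $(n,s)$, with $\wt_S$ as in \Cref{def:Sch}. Substituting into the displayed formula for $\theta_{n,r,s}$ and using $\wt_S\bigl((n,s+1)\to(n,s)\bigr)=\frac{\overline{\alpha_{s-1}}}{\overline{\alpha_s}}(1-|\alpha_s|^2)$, the subtracted term deletes exactly the paths that end with a vertical down-step --- the same bookkeeping used for $\eta$ in \Cref{pro:phi_star_c_nrs} --- and collecting constants produces $\theta_{n,r,s}$ as the asserted multiple of the weight sum of Schr\"oder paths from $(0,r)$ to $(n,s)$ that do not end with a vertical down-step.

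The hard part will be this last step: pinning down the overall constant and the exact starting behaviour of the Schr\"oder paths attached to $\nu_{n,r,s}$. This is precisely where the dependence on $\alpha_{r-1}$ (and $\alpha_r$) enters --- it encodes that $\Phi_r^*$, unlike $\Phi_r$, lets the associated path open with a vertical down-step --- and the effect of \eqref{eq:rec2} at that opening step must be tracked with care, whereas everything after it is a mechanical repetition of the $\eta$ computation. Should the recurrence bookkeeping prove awkward, an alternative is to obtain the Schr\"oder description of $\nu_{n,r,s}$ from the one for $\nu_{-n,r,s}$ recorded in \Cref{rem:2}, namely $\nu_{-n,r,s}=\overline{\alpha_r}\,\frac{\prod_{j=0}^{r-1}(1-|\alpha_j|^2)}{\prod_{j=0}^{s-1}(1-|\alpha_j|^2)}\sum_p\wt_S(p)$, together with the reciprocity for $\nu$.
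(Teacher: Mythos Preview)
Your plan is essentially the paper's own: derive the analogue of \eqref{eq:c_nrs} for $\theta$ by the same matrix inversion (this is exactly \eqref{eq:d_nrs}), and then pass $\nu_{n,r,s}$ to the Schr\"oder model. The only methodological difference is how you reach the Schr\"oder description of $\nu_{n,r,s}$: the paper applies the grouping idea of \Cref{prop:Luka=Sch} directly to the \L{}ukasiewicz formula of \Cref{prop:phi_star1}, whereas you re-run the recurrence argument of \Cref{thm:Sch} (noting it only touches the first slot of the inner product, so $\Phi_r^*$ can replace $\Phi_r$ with $r$ inert) and then match the initial data at $n=0$. Both routes land on the same object: $\nu_{n,r,s}=-\overline{\alpha_{r-1}}\sum_p\wt_S(p)$, the sum over \emph{all} Schr\"oder paths from $(0,r)$ to $(n,s)$ (initial vertical down-steps allowed). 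Two small slips to fix: you wrote ``determined by its values at $s=0$'' where you mean $n=0$; and the recurrence $\nu_{n,r+1,s}=\nu_{n,r,s}-\overline{\alpha_r}\mu_{n+1,r,s}$ you quote is not the one actually used in your argument.

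There is, however, a genuine discrepancy you should flag rather than gloss over. Substituting $\nu_{n,r,s}=-\overline{\alpha_{r-1}}\sum_p\wt_S(p)$ into \eqref{eq:d_nrs} and removing the paths that end with a vertical down-step gives
\[
  \theta_{n,r,s}=\frac{\overline{\alpha_{r-1}}}{\overline{\alpha_{s-1}}}\sum_{p}\wt_S(p),
\]
\emph{without} the factor $|\alpha_r|^2$ in the denominator. This is not ``the asserted multiple'': the statement of the proposition carries an extra $|\alpha_r|^{-2}$. A direct check at $(n,r,s)=(1,0,0)$ already separates the two (one finds $\theta_{1,0,0}=1/\overline{\alpha_0}$, while the stated formula would give $1/(\overline{\alpha_0}\,|\alpha_0|^2)$), so your computation is the correct one and the printed constant appears to be a typo. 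The paper's proof sketch is consistent with your answer once one reads the ``first horizontal step'' there as the regrouped step of weight $-\alpha_r\overline{\alpha_{r-1}}$ (the ``new $H$'' from the proof of \Cref{prop:Luka=Sch}), not the Definition~\ref{def:Sch} weight $-\overline{\alpha_{r-1}}/\overline{\alpha_r}$; extracting that step then yields exactly your prefactor $-\overline{\alpha_{r-1}}$.
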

\begin{proof}
  The proposition can be proved in a similar way to the proof of
\Cref{pro:phi_star_c_nrs}. We omit the detail, but the key is to
obtain
  \begin{equation}\label{eq:d_nrs} \theta_{n,r,s} =
-\frac{1}{\overline{\alpha_{s-1}}}\left(\nu_{n,r,s}-\frac{\overline{\alpha_{s-1}}}{\overline{\alpha_s}}(1-|\alpha_s|^2)\nu_{n,r,s+1}\right)
  \end{equation} by rewriting~\eqref{eq:lc3} in terms of a matrix
equation. Using the grouping idea introduced in the proof of
\Cref{prop:Luka=Sch}, we obtain \( \nu_{n,r,s} = \alpha_r^{-1}\sum_p
\wt_S(p) \), where the sum is over all Schr\"oder paths from \( (-1,r)
\) to \( (n,s) \) that must start with the horizontal step of weight
\( -\overline{\alpha_{r-1}}\cdot(\overline{\alpha_{r}})^{-1} \). This
weight comes from the fact that the \L{}ukasiewicz paths used for \(
\nu_{n,r,s} \) do not start with an up-step. All other steps have the
weight in~\Cref{def:Sch}. Extracting the weight of the step \(
(-1,r)\to (0,r) \), one can obtain the result.
\end{proof}

\begin{remark}
The value \( \nu_{n,r,s} \) is a polynomial in variables \(
\alpha_0,\overline{\alpha_0}, \alpha_1, \overline{\alpha_1}, \dots \),
and it is \( \{\alpha_0,-\overline{\alpha_0}, \alpha_1,
-\overline{\alpha_1}, \dots \} \)-positive. However,
by~\Cref{pro:phi_star_c_nrs} and~\Cref{pro:Phi_star_d_nrs}, we see
that two linearization coefficients \( \eta_{n,r,s} \) and \(
\theta_{n,r,s} \) are not polynomials in variables \(
\alpha_0,\overline{\alpha_0}, \alpha_1, \overline{\alpha_1}, \dots \).
Furthermore, even though \(
\overline{\alpha_{s}}\overline{\alpha_{s-1}}\eta_{n,r,s} \) and \(
\overline{\alpha_{s-1}}|\alpha_r|^2\theta_{n,r,s} \) are polynomials,
they are not \( \{\alpha_0,-\overline{\alpha_0}, \alpha_1,
-\overline{\alpha_1}, \dots \} \)-positive.
\end{remark}
\begin{remark}
  We can rewrite \( \eta_{n,r,s} \) using the grouping idea in the
proof of \Cref{prop:Luka=Sch} as
   \[ \eta_{n,r,s}=(\overline{\alpha_{s}})^{-1}\sum_{p\in
\Luka_{n-1,r,s}}\wt_L(p)-(\overline{\alpha_{s-1}})^{-1}\sum_{p\in
\Luka_{n-1,r,s-1}}\wt_L(p),
  \] where the weight function \( \wt_L \) is defined in
\Cref{def:Luka}. The value \( \theta_{n,r,s} \) can be given in a
similar way. We can also check that \(
\overline{\alpha_{s}}\overline{\alpha_{s-1}}\eta_{n,r,s} \) and \(
\overline{\alpha_{s-1}}|\alpha_r|^2\theta_{n,r,s} \) are polynomials
using this idea.
\end{remark}

\section*{Acknowledgement}
The authors thank Jang Soo Kim for suggesting the problem that
inspired this work and for his careful proofreading and helpful
suggestions. We also thank Jisun Huh, Byung-Hak Hwang and Jaeseong Oh
for their careful review of the final draft, and an anonymous referee
for valuable comments on the manuscript. The authors were supported by
the National Research Foundation of Korea(NRF) grants
\#2022R1A2C101100911. J. Jang was supported by the BK21 FOUR Project.
M. Song was supported by the National Research Foundation of
Korea(NRF) grant funded by the Korea government(MSIT) (No.
2022R1C1C2009025).

\bibliographystyle{plain}

\end{document}